\newtheorem{theorem}{Theorem}[section]
\newtheorem{lemma}[theorem]{Lemma}
\newtheorem{proposition}[theorem]{Proposition}
\newtheorem{corollary}[theorem]{Corollary}
\newtheorem{definition}[theorem]{Definition}
\newtheorem{example}[theorem]{Example}
\newtheorem{remark}[theorem]{Remark}
\numberwithin{equation}{section}
\newcommand{\ba}{\begin{array}}
\newcommand{\ea}{\end{array}}
\newcommand{\ds}{\displaystyle}
\newcommand{\Om}{\Omega}
\newcommand{\la}{\lambda}
\newcommand{\R}{{\mathbb R}}
\newcommand{\N}{{\mathbb N}}
\newcommand{\lan}{\langle}
\newcommand{\noi}{\noindent}
\begin{document}
\captionsetup[figure]{labelfont={bf},name={Fig.},labelsep=period}
\date{}
\title{ \bf\large{Effect of spatial average on the spatiotemporal pattern formation of reaction-diffusion systems}\footnote{Partially supported by a grant from China Scholarship Council, US-NSF grant DMS-1715651, National Natural Science Foundation of China (No.11571257), Zhejiang Provincial Natural Science Foundation of China (No.LY19A010010).}}
\author{Qingyan Shi\textsuperscript{1},\ \ Junping Shi\textsuperscript{2}\footnote{Corresponding Author, Email: jxshix@wm.edu},\ \ Yongli Song\textsuperscript{3}
 \\
{\small \textsuperscript{1} School of Science, Jiangnan University,\hfill{\ }}\\
\ \ {\small Wuxi, Jiangsu, 214122, China.\hfill{\ }}\\
{\small \textsuperscript{2} Department of Mathematics, William \& Mary,\hfill{\ }}\\
\ \ {\small Williamsburg, Virginia, 23187-8795, USA.\hfill {\ }}\\
{\small \textsuperscript{3} Department of Mathematics, Hangzhou Normal University,\hfill{\ }}\\
\ \ {\small Hangzhou, Zhejiang, 311121, China.\hfill {\ }}}
\maketitle

\begin{abstract}
Some quantities in the reaction-diffusion models from cellular biology or ecology depend on the spatial average of density functions instead of local density functions. We show that such nonlocal spatial average can induce instability of constant steady state, which is different from classical Turing instability. For a general scalar equation with spatial average, the occurrence of the steady state bifurcation is rigorously proved, and the formula to determine the bifurcation direction and the stability of the bifurcating steady state is given. For the two-species model, spatially non-homogeneous time-periodic orbits could arise due to spatially non-homogeneous Hopf bifurcation from the constant equilibrium. Examples from a nonlocal cooperative Lotka-Volterra model  and a nonlocal Rosenzweig-MacArthur predator-prey model are used to demonstrate the bifurcation of spatially non-homogeneous patterns.
\end{abstract}

\noindent
{\bf Keywords:} Nonlocal spatial average; pattern formation; reaction-diffusion equation; spatial non-homogeneous Hopf bifurcation; steady state bifurcation

\noindent
{\bf MSC2000}: 34K18, 92B05, 35B32, 35K57

%\baselineskip=16pt

%=========================================================
\section{Introduction} \label{Sec:Introduction}

Spatiotemporal pattern formation in the natural world has been a fascinating subject for scientific research in recent years.
One well acknowledged theory is proposed by Turing \cite{Turing1952} who suggested that the random movement of chemicals can destabilize the system and results in the spatially non-homogeneous distribution of chemicals. Different types of Turing-type spatiotemporal patterns have been discovered in chemistry \cite{Lengyel1991,Ouyang1991}, developmental biology \cite{Kondo1995,Sheth2012,Sick2006}, and ecology \cite{Klausmeier1999,Juergens2013,Rietkerk2004}. Turing's theory of diffusion-driven instability or Turing instability has been credited as the main mechanism of these realistic pattern formation phenomena \cite{Kondo2010,maini1997spatial}.

While Turing's instability theory has profound influence on the studies of many spatial chemical or biological models, its scope of application is also restricted. For a system of two interacting chemical/biological species, the occurrence of Turing instability requires (i) an interaction of species of activator-inhibitor type; and (ii) diffusion coefficients of two species in different scales. Hence Turing type pattern formation cannot occur for a two-species reaction-diffusion system if the system is competitive or cooperative type, or the two diffusion coefficients are nearly identical. Indeed it is known that a stable steady state  of a diffusive cooperative (or two-species competitive) system under no-flux boundary condition on a convex domain must be a constant \cite{Kishimoto1985}, and the stability of a constant steady state of a reaction-diffusion system does not change if the diffusion coefficients of variables are identical. It is also known that a stable steady state  of a scalar reaction-diffusion equation under no-flux boundary condition on a convex domain must be a constant \cite{Casten1978,Matano1979}. On the other hand, other types of dispersals have been suggested as possible mechanisms of pattern formation (usually for two-species diffusive competition models), such as cross-diffusion \cite{Lou1996,Mimura1980}, density-dependent diffusion \cite{Mimura1984}, advection towards better resource \cite{ChenXF2012,ChenXF2008,Cosner2003}, or nonlocal competition \cite{Ni2018}. Spatial pattern formation is also possible for scalar equation or  two-species diffusive competition model on a dumbbell-shaped domain (which is not convex) \cite{Matano1979,Matano1983}.

In this paper we explore the effect of spatial average of density functions on the dynamics of reaction-diffusion systems, in particular on the spatiotemporal pattern formation. Here the density function $u(x,t)$ depends on spatial variable $x$ and time $t$, and the spatial average is $\bar{u}=\ds \frac{1}{|\Omega|}\int_{\Omega} u(y,t) dy$ where $\Omega$ is the bounded spatial domain and $|\Omega|$ is the Lebesgue measure (volume) of $\Omega$. This is a special form of integral average like $\ds \int_{\Omega} K(x,y)u(y,t) dy$ with an integral kernel $K(x,y)$. Such nonlocal effect appears in various reaction-diffusion models.  In \cite{Britton1989,Gourley2001}, such a nonlocal term represents the aggregation  induced by grouping behavior, for example, the aggregation of insects for the purpose of social work or the herd behavior for defense. The integral form also appears as nonlocal competition for the resource or a nonlocal crowding effect in a scalar model of bacteria colonies  \cite{ChenShi2012,Fuentes2003,Furter1989,Sun2013}, and further studies have been conducted for diffusive competition model \cite{Ni2018} or predator-prey model with nonlocal crowding effect in prey population \cite{ChenYu2017,Merchant2011}. Another reaction-diffusion model with effect of spatial average was proposed in \cite{Altschuler2008} where the integral term represents the total amount of cytoplasmic molecules in a feedback loop, see also \cite{Tian2018} for a more recent study.

Motivated by  previous examples, we consider the following general form of two-species reaction-diffusion system with spatial average:
\begin{equation}\label{equ-main}
\begin{cases}
u_{t}=d_1\Delta u+f(u,v,\bar{u},\bar{v},r), & x\in\Om,\ t>0,\\
v_{t}=d_2\Delta v+g(u,v,\bar{u},\bar{v},r), & x\in\Om,\ t>0,\\
\partial_{\nu}u=\partial_{\nu}v=0, & x\in\partial\Om,\ t>0,
\end{cases}
\end{equation}
where $u(x,t)$, $v(x,t)$ are the density functions of two interacting chemical/biological species, $\bar{u}=\dfrac{1}{|\Om|}\ds\int_{\Om}u(x,t)dx$, $\bar{v}=\dfrac{1}{|\Om|}\ds\int_{\Om}v(x,t)dx$ are the spatial averages of $u$ and $v$ respectively; $\Omega$ is a bounded domain in $\R^m$ ($m\ge 1$) with smooth boundary $\partial\Om$; a no-flux boundary condition is imposed so the system is a closed one; the interactions are described by smooth functions $f,g:\R^5\to\R$; and  $d_1,~d_2>0$ are the diffusion coefficients and $r>0$ is a possible kinetic system parameter. Assume that $(u_*,v_*)$ is a non-negative spatially constant steady state, and it is linearly stable with respect to a spatially homogeneous perturbation. We show that $(u_*,v_*)$ can be unstable under a spatially non-homogeneous perturbation, that is, the constant steady state $(u_*,v_*)$ is unstable for the system \eqref{equ-main}. While this has been shown to be possible under the Turing instability scheme, our instability result does not necessarily require the activator-inhibitor interaction, nor it requires the different scales of diffusion coefficients. Also our approach can not only produce spatially non-homogeneous steady state pattern through \textit{steady state instability}, but it also can produce spatially non-homogeneous time-periodic oscillatory patterns through \textit{wave instability}. All these patterns can be generated through varying the diffusion coefficients, and bifurcation theory can be used to prove the existence of small amplitude non-constant steady states or periodic orbits.  Note that classical Turing mechanism cannot lead to wave instability for systems with only two interacting species.

More specifically, let the Jacobian matrices at $(u_*,v_*)$ be defined as
\begin{equation}
J_{U}=\begin{pmatrix}f_u & f_v\\ g_u & g_v\end{pmatrix},~J_{\bar{U}}=\begin{pmatrix}f_{\bar{u}} & f_{\bar{v}}\\ g_{\bar{u}} & g_{\bar{v}}\end{pmatrix}.
\end{equation}
We assume that the matrix $J_U+J_{\bar{U}}$ is stable with all eigenvalues with negative real parts, but $J_U$ is not stable, then we have the following scenarios for the pattern formation of system \eqref{equ-main}: (see Theorem \ref{theorem-instability} for more details)
\begin{description}
\item (i) if $Tr(J_U)<0$, then steady state instability may occur but not the wave instability;
\item (ii) if  $Tr(J_U)>0$, then both wave and steady state instability may occur.
\end{description}
Here $Tr(J_U)=f_u+g_v$ is the trace of $J_U$. The studies here is induced by the dependence of dynamics on the spatial average of variable, which is reflected in $J_{\bar{U}}$. A similar study in \cite{Chen2013} considered the  dependence of dynamics on the time-delayed variables. The diffusion-induced pattern formation found in  \eqref{equ-main} here
does not occur in the corresponding ``localized system" of \eqref{equ-main}:
\begin{equation}\label{equ-mainlocal}
\begin{cases}
u_{t}=d_1\Delta u+f(u,v,u,v,r), & x\in\Om,\ t>0,\\
v_{t}=d_2\Delta v+g(u,v,u,v,r), & x\in\Om,\ t>0,\\
\partial_{\nu}u=\partial_{\nu}v=0, & x\in\partial\Om,\ t>0,
\end{cases}
\end{equation}
which is the standard two-species reaction-diffusion system where the reaction is completely localized, or in the corresponding ordinary differential equation model in which the reaction is completely homogenized. Hence both the localized reaction and the homogenized reaction  pattern contribute to the formation occurred in  \eqref{equ-main}. This shows that not only spatial heterogeneity can induce rich spatial patterns, but sometimes partial homogeneity can also lead to spatiotemporal patterns.

As example of this new pattern formation mechanism, we show in  Section 4 that in a reaction-diffusion Lotka-Volterra cooperative system with a nonlocal intraspecific competition, stable spatially non-homogeneous steady state pattern can occur when one of diffusion coefficients decreases, while the constant coexistence steady state is globally asymptotically stable in its corresponding localized system. In this case, the interaction between the two species is clearly not activator-inhibitor type, but a cooperative or mutualistic  one. In various spatially heterogenous ecosystems, alternative stable states or self-organized patterns have been found \cite{kefi2016can}, and the mechanism introduced here could be the cause of spatially non-homogeneous patterns. In Section 5, we demonstrate the occurrence of both steady state and wave instability in a reaction-diffusion Rosenzweig-MacArthur predator-prey model with a  nonlocal intraspecific competition in the prey population. Again in the corresponding localized system, the constant coexistence steady state is globally asymptotically stable. But the addition of the spatial average intraspecific competition can lead to either a spatially non-homogeneous steady state or a spatially non-homogeneous time-periodic pattern. The latter one can be viewed as stable pattern generated from Turing-Hopf bifurcation, which is rarely achieved in two-variable reaction-diffusion models \cite{maini1997spatial}.

Our result also has  a  version for the scalar counter part of  \eqref{equ-main}:
\begin{equation}\label{equ-single}
\begin{cases}
u_{t}=d\Delta u+r f(u,\bar{u}), & x\in\Om,\ t>0,\\
\partial_{\nu}u=0, & x\in\partial\Om,\ t>0.
\end{cases}
\end{equation}
Assume that $u_*$ is a constant steady state, and it is stable for the non-spatial model in the sense that $f_u+f_{\bar{u}}<0$ at $u=u_*$. In  Section 2 we show that
\begin{description}
\item (i) if $f_{u}<0$, then $u_*$ is locally asymptotically stable for all $d,r>0$;
\item (ii) if $f_{u}>0$, then there exists $d_1>0$ such that $u_*$ is locally asymptotically stable for $d>d_1$, and it is unstable for $0<d<d_1$. A spatially non-homogeneous  steady state pattern emerges at $d=d_1$.
\end{description}
Here $f_u=f_u(u_*,u_*)$. The above results for the scalar equation \eqref{equ-single} have been implied in \cite{Furter1989}, and our results for the two-species model \eqref{equ-main} are generalizations of these results in a sense. But for scalar equations, wave instability cannot occur and there are more possible cases to consider for the two-species model \eqref{equ-main}.

This paper is organized as follows. First the pattern formation for a general scalar equation with spatial average in studied in Section 2. In Section 3, the possible scenarios for pattern formation in a  general two-species reaction-diffusion model with spatial average subjected to the homogeneous Neumann boundary condition are considered. The general theory is applied to  two specific  biological system:  a diffusive  Lotka-Volterra cooperative model and a  diffusive Rosenzweig-MacArthur predator-prey model each with effect of spatial average,  in Section 4 and Section 5 respectively.  In Section 6, we conclude our work and compare our results with the classic Turing pattern formation. For the convenience of the following analysis, we introduce some notations: the real-valued Sobolev space corresponding to the Neumann boundary value problem is denoted as $X=\{u\in W^{2,p}(\Om):\partial_{\nu}u=0\}$ and $Y=L^p(\Omega)$ denotes the real-valued $L^p$ space, where $p>m$. Also, it is well known that the eigenvalue problem
\begin{equation}\label{eig0}
\begin{cases}
\Delta \varphi+\la\varphi=0, & x\in\Om,\\
\partial_{\nu}\varphi=0, & x\in\partial\Om,
\end{cases}
\end{equation}
has infinitely many eigenvalues satisfying
\begin{equation*}
0=\la_0<\la_1\le \la_2\le\cdots\le\la_i\le\la_{i+1}\le \cdots<+\infty,
\end{equation*}
with the corresponding eigenfunction $\varphi_i$ ($i\geq0$) satisfying  $\ds\int_{\Omega}\varphi_i^2dx=1$.

\section{Pattern formation in scalar models}

In this section we consider the pattern formation in the scalar reaction-diffusion model \eqref{equ-single}. We recall that from \cite{Casten1978,Matano1979}, the localized model
\begin{equation*}
\begin{cases}
u_{t}=d\Delta u+rf(u,u), & x\in\Om,\ t>0,\\
\partial_{\nu}u=0, & x\in\partial\Om,\ t>0,
\end{cases}
\end{equation*}
has no non-constant stable steady state solutions if $\Omega$ is convex.

We assume that there exists at least one positive constant steady state $u=u_*$ of \eqref{equ-single} such that $f(u_*,u_*)=0$.
Linearizing Eq. \eqref{equ-single} at $u=u_*$, we obtain an eigenvalue problem
\begin{equation}\label{eig1}
\begin{cases}
d\Delta \phi+r(f_{u}\phi+f_{\bar{u}}\bar{\phi})=\mu\phi, & x\in\Om,\\
\partial_{\nu}\phi=0, & x\in\partial\Om,
\end{cases}
\end{equation}
where $f_u=f_u(u_*,u_*)$ and $f_{\bar{u}}=f_{\bar{u}}(u_*,u_*)$. The eigenvalues of \eqref{eig1} are easy to determine as follows:
\begin{lemma}\label{lem:2.1}
Let $\la_i$ be  eigenvalues of \eqref{eig0} and let $\varphi_i$ be the corresponding eigenfunctions for $i\in \N_0$. Then the eigenvalues of \eqref{eig1} are $\mu_0=r(f_u+f_{\bar{u}})$ with eigenfunction $\phi_0=1$, and $\mu_i=-d\la_i+rf_u$ for $i\ge 1$ with eigenfunction $\phi_i=\varphi_i$.
\end{lemma}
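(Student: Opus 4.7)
The plan is to exploit the fact that the only nonlocal term in \eqref{eig1} is the rank-one perturbation $\phi \mapsto rf_{\bar u}\bar\phi$, which lands entirely in the span of the constant function. The strategy is therefore to diagonalize \eqref{eig1} in the orthonormal basis $\{\varphi_i\}_{i\ge 0}$ of Neumann Laplacian eigenfunctions and read off the eigenvalues mode by mode.

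First I would recall that $\varphi_0$ is the constant eigenfunction (with $\lambda_0=0$) and that the $L^2$-orthogonality condition $\int_\Omega \varphi_i \varphi_0\,dx = 0$ for $i\ge 1$ forces $\int_\Omega \varphi_i\,dx = 0$, hence $\overline{\varphi_i}=0$ for $i\ge 1$. Substituting the trial function $\phi=1$ (a scalar multiple of $\varphi_0$) into \eqref{eig1} gives $\bar\phi = 1$, $\Delta\phi = 0$, so the left-hand side becomes $r(f_u+f_{\bar u})$, yielding the eigenvalue $\mu_0=r(f_u+f_{\bar u})$. Substituting $\phi=\varphi_i$ for $i\ge 1$ collapses $\bar\phi$ to zero, and \eqref{eig1} reduces to $d\Delta\varphi_i + rf_u\varphi_i = \mu\varphi_i$, i.e.\ $(-d\lambda_i + rf_u)\varphi_i = \mu\varphi_i$, giving $\mu_i=-d\lambda_i+rf_u$.

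To show these exhaust the spectrum, I would expand an arbitrary eigenfunction as $\phi=\sum_{i\ge 0} c_i\varphi_i$ in $L^2(\Omega)$, using completeness of the Neumann eigenbasis. Then $\bar\phi = c_0 \,\overline{\varphi_0}$ involves only the zeroth coefficient, and plugging the expansion into \eqref{eig1} and matching the coefficient of each $\varphi_i$ produces the decoupled scalar relations $\bigl[r(f_u+f_{\bar u})-\mu\bigr]c_0=0$ and $\bigl[-d\lambda_i+rf_u-\mu\bigr]c_i=0$ for $i\ge 1$. A nontrivial eigenfunction therefore forces $\mu$ to coincide with one of the $\mu_i$ listed, and the eigenfunction is (up to scalar) the corresponding $\varphi_i$. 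There is no real obstacle here; the only subtlety worth being explicit about is that the nonlocal coupling acts only on the $\varphi_0$-component, which is what cleanly decouples the infinite system into one-dimensional blocks.
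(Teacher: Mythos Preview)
Your proposal is correct and rests on the same idea as the paper: the nonlocal term $rf_{\bar u}\bar\phi$ lives entirely in the constant mode, so the problem decouples along the Neumann eigenbasis. The paper's version is terser---it integrates \eqref{eig1} over $\Omega$ to obtain $r(f_u+f_{\bar u})\bar\phi=\mu\bar\phi$ and then splits into the two cases $\bar\phi\ne 0$ and $\bar\phi=0$---while you carry out the full Fourier expansion and match coefficients; your route is slightly more explicit about why the listed eigenvalues exhaust the spectrum, but the underlying mechanism is identical.
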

\begin{proof}
Integrating \eqref{eig1}, we have that $r(f_{u}+f_{\bar{u}})\bar{\phi}=\mu\bar{\phi}$. When $\bar{\phi}\ne 0$, we obtain $\mu_0=rf_u$ and $\phi_0=\bar{\phi_0}=1$; and when $\bar{\phi}= 0$, we obtain $\mu_i=-d\la_i+rf_u$ and $\phi_i=\varphi_i$ for $i\ge 1$.
\end{proof}

The stability of a constant steady state $u=u_*$ and possible emergence of spatial patterns of \eqref{equ-single} now can be stated as follows.
\begin{theorem}\label{theorem-single}
Suppose that $r>0$, $f\in C^1(\R^2,\R)$ satisfying $f(u_*,u_*)=0$ for some $u_*\ge 0$,  $f_u=f_u(u_*,u_*)$, $f_{\bar{u}}=f_{\bar{u}}(u_*,u_*)$, and  $f_{u}+f_{\bar{u}}<0$. %And we assume that $\la_i$ are simple eigenvalues of \eqref{eig0},  then for ,
\begin{description}
\item (i) if $f_{u}<0$, then $u_*$ is locally asymptotically stable with respect to \eqref{equ-single} for all $d,r>0$;
\item (ii)  if $f_{u}>0$, then there exist $d_1:=rf_{u}/\la_1$ such that $u_*$ is locally asymptotically stable for $d>d_1$, and it is unstable for $0<d<d_1$.
\end{description}
\end{theorem}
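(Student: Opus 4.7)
The plan is to deduce the theorem directly from the eigenvalue characterization in Lemma \ref{lem:2.1}. Since the linearization at $u_*$ is a compact perturbation of $d\Delta$ under Neumann boundary conditions, the spectrum consists of the (real) eigenvalues $\mu_0=r(f_u+f_{\bar u})$ and $\mu_i=-d\la_i+rf_u$ for $i\ge 1$, and local asymptotic stability of $u_*$ with respect to \eqref{equ-single} is equivalent to all $\mu_i$ being strictly negative (by standard semigroup / principle of linearized stability arguments for reaction-diffusion systems on bounded domains with smooth $f$). So the task reduces to checking the sign of every $\mu_i$.

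First I would note that the hypothesis $f_u+f_{\bar u}<0$ immediately yields $\mu_0<0$, so the ``homogeneous mode'' is always stable and never the source of an instability; only the modes $i\ge 1$ matter. For part (i), if $f_u<0$ then for every $i\ge 1$ we have $\mu_i=-d\la_i+rf_u<0$ because $d,\la_i>0$ and $rf_u<0$. Hence all eigenvalues are negative for every $d,r>0$, giving local asymptotic stability of $u_*$.

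For part (ii), assume $f_u>0$. The inequality $\mu_i=-d\la_i+rf_u<0$ is equivalent to $d>rf_u/\la_i$, and since the sequence $\{\la_i\}_{i\ge 1}$ is nondecreasing and positive, the most stringent condition is obtained at $i=1$, namely $d>d_1:=rf_u/\la_1$. Thus when $d>d_1$ every $\mu_i$ (for $i\ge 1$) is negative and, together with $\mu_0<0$, this gives local asymptotic stability of $u_*$. When $0<d<d_1$ we have $\mu_1=-d\la_1+rf_u>0$, so $u_*$ is unstable.

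There is no real obstacle here: the statement is essentially a corollary of Lemma \ref{lem:2.1} once one observes that $\la_1$ is the smallest positive Neumann eigenvalue of $-\Delta$ on $\Om$. The only point worth stating carefully is that the linearized stability principle applies in the nonlocal setting, which is standard because the nonlocal operator $\phi\mapsto f_{\bar u}\bar\phi$ is bounded and compact on $Y=L^p(\Om)$ and therefore does not alter the essential conclusion that sign of the real parts of the eigenvalues governs stability.
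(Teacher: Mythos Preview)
Your proof is correct and follows essentially the same approach as the paper: both reduce the question to the eigenvalue formulas from Lemma~\ref{lem:2.1}, observe that $\mu_0<0$ by hypothesis, and then check the sign of $\mu_i=-d\la_i+rf_u$ for $i\ge 1$ to obtain (i) and (ii). Your additional remark that the linearized stability principle applies because the nonlocal term is a bounded (compact) perturbation is a reasonable extra justification that the paper leaves implicit.
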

\begin{proof}
The condition $f_{u}+f_{\bar{u}}<0$ guarantees that $u=u_*$ is locally asymptotically stable in the absence of diffusion and $\mu_0<0$. When $f_{u}<0$ is satisfied, from Lemma \ref{lem:2.1}, we see that $\mu_i<0$ holds for any $i\in\mathbb{N}$, thus $u_*$ is locally asymptotically stable for system \eqref{equ-single}, thus (i) is proved. If $f_{u}>0$, it is possible for $\mu_i=-d\la_i+rf_u=0$ and it occurs at $d=d_i:=rf_u/\la_i$. Also, we know that the constant equilibrium loses its stability at the first bifurcation point $d=d_1$. This completes the proof of part (ii).
\end{proof}

In the following theorem, we give a more detailed bifurcation result for the following steady state (nonlocal elliptic) problem:
\begin{equation}\label{equ-singleelliptic}
\begin{cases}
d\Delta u+rf(u,\bar{u})=0, & x\in\Om,\\
\partial_{\nu}u=0, & x\in\partial\Om.
\end{cases}
\end{equation}

\begin{theorem}\label{theorem-scalarbifurcation}
Suppose that $r>0$, $f\in C^1(\R^2,\R)$ satisfying $f(u_*,u_*)=0$ for some $u_*\ge 0$ and  $f_{u}+f_{\bar{u}}<0$. And we assume that for some $i\in \N$, $\la_i$ is a simple eigenvalue of \eqref{eig0}, and $f_{u}>0$.
\begin{description}
\item (i) Near $(d_i,u_*)$, Eq. \eqref{equ-singleelliptic} has a line of trivial solutions  $\Gamma_0=\{(d,u_*):d>0\}$  and a family of nontrivial solutions bifurcating from $\Gamma_0$ at $d=d_i$:
\begin{equation}\label{equ-singleGamman}
\Gamma_i=\left\{\left(d_i(s), u_i(s,x)\right):\ -\delta<s<\delta\right\},
\end{equation}
where $\delta>0$, $u_i(s,x)=u_*+s\varphi_i(x)+sg_i(s,x)$ and $d_i(s),~g_i(s,\cdot)$ are continuous functions defined for $s\in (-\delta,\delta)$ such that $d_i(0)=d_i$, and $g_i(0,\cdot)=0$; and there are no other solutions of \eqref{equ-singleelliptic} than the ones on $\Gamma_0$ and $\Gamma_i$ near $(d,u)=(d_i,u_*)$.

\item (ii) If $f\in C^2$ near $(u_*,u_*)$, then $d_i(s),~g_i(s,\cdot)$ are $C^1$ for $s\in (-\delta,\delta)$, and
\begin{equation}\label{d'}
d_i'(0)=\dfrac{d_if_{uu}\ds\int_{\Om}\varphi_i^3dx}{2f_u\ds\int_{\Om}\varphi_i^2dx},
\end{equation}
If $d_i'(0)\not=0$, then the steady state bifurcation at $d=d_i$ is transcritical type. Moreover  the solution $(d_1(s),u_1(s,\cdot))$ with $d_1(s)<d_1$ is locally asymptotically stable, and the one with $d_1(s)>d_1$ is unstable; and all solutions of $\Gamma_i$ with $i\ge 2$ are unstable.%, and the bifurcation is supercritical when $d'(0)<0$ and subcritical when $d'(0)>0$.
\item (iii) If $d_i'(0)=0$ and $f\in C^3$ near $(u_*,u_*)$, then $d_i(s),~g_i(s,\cdot)$ are $C^2$ for $s\in (-\delta,\delta)$, and
\begin{equation}\label{d''}
d_i''(0)=\dfrac{d_if_{uuu}\ds\int_{\Omega}\varphi_i^4dx+3d_if_{uu}\ds\int_{\Omega}w\varphi_i^2dx+3d_if_{u\bar{u}}\ds\int_{\Omega}\bar{w}\varphi_i^2dx}{3f_{u}\ds\int_{\Omega}\varphi_i^2dx},
\end{equation}
where $w=w(x)$ is the unique solution of
\begin{equation}\label{equ-singletheta}
\begin{cases}
d_i\Delta w+rf_u w+rf_{\bar{u}}\bar{w}=-rf_{uu}\varphi_1^2, & x\in\Omega, \\
\partial_{\nu}w=0, & x\in\partial\Omega,\\
\ds\int_{\Omega}w(x)dx=0.
\end{cases}
\end{equation}
 If  $d_i''(0)\not=0$, then the steady state bifurcation at $d=d_i$ is pitchfork type. Moreover, the solution $(d_1(s),u_1(s,\cdot))$ with all $s\ne 0$ is locally asymptotically stable  when $d_1''(0)<0$ (the bifurcation is supercritical), and the solution $(d_1(s),u_1(s,\cdot))$ with all $s\ne 0$ is unstable when $d_1''(0)>0$ (the bifurcation is subcritical).
\end{description}
\end{theorem}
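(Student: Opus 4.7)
The plan is to realize the theorem as a direct application of the Crandall--Rabinowitz bifurcation from a simple eigenvalue theorem, followed by a perturbation expansion for the bifurcation direction and the exchange-of-stability principle for the stability assertions.

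For part (i), define $F:\R^+\times X\to Y$ by $F(d,u)=d\Delta u+rf(u,\bar u)$; then $\Gamma_0\subset F^{-1}(0)$ and the Fr\'echet derivative $L(d):=F_u(d,u_*)\phi=d\Delta\phi+rf_u\phi+rf_{\bar u}\bar\phi$ is exactly the operator in \eqref{eig1}. Using Lemma~\ref{lem:2.1}, the stability hypothesis $f_u+f_{\bar u}<0$ excludes the mode $\varphi_0=1$ from $\ker L(d_i)$, and simplicity of $\lambda_i$ together with $\mu_i=-d\lambda_i+rf_u$ gives $\ker L(d_i)=\mathrm{span}(\varphi_i)$. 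A brief integration-by-parts computation (using $\int_\Omega\bar\phi\,\psi\,dx=|\Omega|\,\bar\phi\,\bar\psi$) shows $L(d_i)$ is self-adjoint on $L^2(\Omega)$, so the cokernel is also $\mathrm{span}(\varphi_i)$. The transversality condition reduces to $\int_\Omega\varphi_i\cdot F_{du}(d_i,u_*)\varphi_i\,dx=-\lambda_i\int_\Omega\varphi_i^2dx=-\lambda_i\neq0$. Crandall--Rabinowitz then delivers $\Gamma_i$ in the stated form.

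For part (ii), I plug $u_i(s,x)=u_*+s\varphi_i+sg_i(s,\cdot)$ with $g_i(s,\cdot)$ taken in a fixed complement of $\ker L(d_i)$ into $d_i(s)\Delta u_i+rf(u_i,\bar u_i)=0$, differentiate twice in $s$, and evaluate at $s=0$, using $\bar\varphi_i=0$ for $i\ge1$. Multiplying by $\varphi_i$ and integrating kills the $u_{i,ss}(0)$ contribution (because $-d_i\lambda_i+rf_u=0$) and the $\bar u_{i,ss}(0)$ contribution (because $\int_\Omega\varphi_i\,dx=0$), leaving $-2\lambda_i d_i'(0)\int_\Omega\varphi_i^2dx+rf_{uu}\int_\Omega\varphi_i^3dx=0$, which combined with $d_i=rf_u/\lambda_i$ gives \eqref{d'}. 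The stability part follows from the exchange-of-stability principle: along $\Gamma_0$ the mode $\mu_i(d)=-d\lambda_i+rf_u$ crosses zero at $d_i$ with speed $-\lambda_i$, so the principal eigenvalue along $\Gamma_i$ has expansion $\gamma(s)=s\lambda_i d_i'(0)+o(s)$, giving stability precisely when $d_i(s)<d_i$; for $i\ge 2$, the additional mode $\mu_1(d_i)=-d_i\lambda_1+rf_u>0$ persists along $\Gamma_i$ by continuity, forcing instability.

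For part (iii), under $d_i'(0)=0$ the solvability condition $\int_\Omega\varphi_i\cdot(-rf_{uu}\varphi_i^2)dx=0$ is exactly satisfied, so the Fredholm alternative yields $w=u_{i,ss}(0)$ solving \eqref{equ-singletheta} (the $\varphi_1$ there should read $\varphi_i$; the normalization in \eqref{equ-singletheta} selects a unique representative in the complement of $\ker L(d_i)$ used to define $g_i$). Then I differentiate once more in $s$, evaluate at $s=0$, and project against $\varphi_i$; the resulting identity contains the cubic term $f_{uuu}\varphi_i^3$, the cross terms $3f_{uu}w\varphi_i$ and $3f_{u\bar u}\bar w\varphi_i$ produced by the third-order Taylor expansion of $f(u,\bar u)$, and a boundary term $-3\lambda_i d_i''(0)\int_\Omega\varphi_i^2dx$; solving for $d_i''(0)$ gives \eqref{d''}. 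The exchange-of-stability argument, now with $d_i(s)\approx d_i+\tfrac12 s^2 d_i''(0)$, yields $\gamma(s)\approx\tfrac12 s^2\lambda_i d_i''(0)$, which is negative (stable) exactly when $d_i''(0)<0$.

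The main obstacle I anticipate is the bookkeeping in the third-derivative expansion of part (iii): the chain rule applied to $f(u(s),\bar u(s))$ generates numerous cross terms involving $u_s,u_{ss},\bar u_s,\bar u_{ss}$, and I must carefully exploit $\bar\varphi_i=0$ and the orthogonality properties of the chosen complement to collapse them to precisely the three integrals appearing in \eqref{d''}. A secondary subtlety is a clean reconciliation of the normalization $\int_\Omega w\,dx=0$ stated in \eqref{equ-singletheta} with the free parameter available in solving \eqref{equ-singletheta}, since adding a multiple of $\varphi_i$ does not alter $\int_\Omega w\,dx$ (as $\int_\Omega\varphi_i\,dx=0$), so the normalization condition imposed must be the one dictated by the parametrization of $g_i$ and recorded carefully.
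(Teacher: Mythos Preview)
Your proposal is correct and follows essentially the same route as the paper: Crandall--Rabinowitz for (i), the standard bifurcation-direction formulas (which the paper quotes from \cite{Shi1999} rather than rederiving by the direct $s$-expansion you outline) for (ii)--(iii), and the Crandall--Rabinowitz exchange-of-stability formula \eqref{formula} for the stability claims. Your side observations are also accurate: the $\varphi_1$ in \eqref{equ-singletheta} should indeed read $\varphi_i$, and the stated normalization $\int_\Omega w\,dx=0$ does not pin down the $\varphi_i$-component of $w$, though the formula \eqref{d''} is insensitive to that component since $d_i'(0)=0$ forces $f_{uu}\int_\Omega\varphi_i^3\,dx=0$.
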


\begin{proof}
For Eq. \eqref{equ-singleelliptic},  $u=u_*$ is a constant steady state of \eqref{equ-singleelliptic} for all $r,d>0$. Fixing $r>0$, we define a nonlinear mapping $F:\ \mathbb{R}^+\times X\rightarrow Y$ by
\begin{equation}\label{equ-singleF}
 F(d,u)=d\Delta u+rf(u,\bar{u}).
\end{equation}
It is clear that $F(d,u_*)=0$.

Then, we have
\begin{equation}\label{equ-singleL}
 F_{u}\left(d_i, u_*\right)[\psi]=d_i\Delta\psi+rf_u\psi+rf_{\bar{u}}\bar{\psi}:=L[\psi].
\end{equation}

\noi\textbf{Step 1.} First, we determine the null space $\mathcal{N}(L)$ of $L$. If $\psi\in\mathcal{N}(L)$, then we have
\begin{equation}\label{equ-singlekernel}
d_i\Delta\psi+rf_u\psi+rf_{\bar{u}}\bar{\psi}=0,
\end{equation}
or equivalently, $\Delta\psi+\la_i\psi+rf_{\bar{u}}/d_i\bar{\psi}=0$.
Integrating Eq. \eqref{equ-singlekernel}, we obtain
\begin{equation*}
r(f_u+f_{\bar{u}})\bar{\psi}=0,
\end{equation*}
which implies that $\bar{\psi}=0$ as $f_u+f_{\bar{u}}<0$ and $r>0$, so $\psi$ satisfies that $\Delta\psi+\la_i\psi=0$, then $\psi=\varphi_i$. And $\mathcal{N}(L)=\textrm{Span}\left\{\varphi_i\right\}$
as $\la_i$ is assumed to be simple, thus $\dim {\mathcal{N}(L)}=1$.

\noi\textbf{Step 2.} We next consider the range space $\mathcal{R}(L)$  of $L$. If $q\in\mathcal{R}(L)$, then there exist $\psi\in X$ such that
\begin{equation}\label{equ-singlerange}
d_i\Delta\psi+rf_u\psi+rf_{\bar{u}}\bar{\psi}=q.
\end{equation}
Multiplying the equation \eqref{equ-singlerange} by $\varphi_i$ and integrating over $\Omega$, we obtain
\begin{equation*}
0=rf_{\bar{u}}\bar{\psi}\int_{\Omega}\varphi_idx=\int_{\Omega}q\varphi_idx.
\end{equation*}
On the other hand, if $\ds\int_{\Omega}q\varphi_idx=0$, then the solution of \eqref{equ-singlerange} is
\begin{equation*}
\psi=\dfrac{\bar{q}}{r(f_u+f_{\bar{u}})}+\sum\limits_{j\neq i}\dfrac{a_j}{rf_u-d_i\la_j}\varphi_j+k\varphi_i,~\textrm{if}~q=\bar{q}+\sum\limits_{j\neq i}a_j\varphi_j,
\end{equation*}
where $k\in\mathbb{R}$ is arbitrary. Hence $\ds\mathcal{R}(L)=\left\{q\in Y: \int_{\Omega}q\varphi_idx=0\right\}$,
which is co-dimensional $1$ in $Y$.

\noi\textbf{Step 3.} We prove that $F_{du}(d_i,u_*)[\varphi_i]\not\in\mathcal{R}(L)$. From \eqref{equ-singleF}, we have
\begin{equation}\label{equ-derivatived}
 F_{du}\left(d,u_*\right)[\varphi_i]=\Delta\varphi_i=-\la_i\varphi_i\not\in\mathcal{R}(L),
\end{equation}
as $\ds\int_{\Omega}\la_i\varphi^2_idx\neq0$.
By applying Theorem 1.7 in \cite{Rabinowitz1971}, we conclude that there exists an open interval $(-\delta, \delta)$ with $\delta>0$
 and continuous functions $d_i(s): (-\delta, \delta)\to \mathbb{R},~g_i(\cdot, s) : (-\delta, \delta)\to Z$,
 where $Z$ is any complement of $\text{Span}\{\varphi_i\}$, such that the solution set of \eqref{equ-singleelliptic} near $\left(d_i, u_*\right)$ consists precisely of the curves $\Gamma_0$ and $\Gamma_i$ defined by \eqref{equ-singleGamman}. This completes the proof of part (i).

\noi\textbf{Step 4.} Now we consider the bifurcation direction and stability of the bifurcating solutions in $\Gamma_i$. According to the results in \cite{Crandall1973,Shi1999}, the direction of the steady state bifurcation is determined by $d_i'(0)$ and $d_i''(0)$. For $y\in Y^*$ (the conjugate space of $Y$) defined by
$\lan y,q\rangle=\ds\int_{\Omega}q\varphi_idx$,
we have \cite{Shi1999}
\begin{equation}\label{equ-singled'}
\aligned
d_i'(0)=&-\frac{\left\langle y,\ F_{uu}\left(d_i, u_*\right)[\varphi_i,\varphi_i]\right\rangle}
 {2\left\langle y,\ F_{du}\left(d_i,u_*\right)[\varphi_i]\right\rangle}.
\endaligned\end{equation}
By \eqref{equ-derivatived} and the definition of $y$, we have
\begin{equation*}
\left\langle y,\ F_{du}\left(d_i, u_*\right)[\varphi_i]\right\rangle=-\la_i\int_{\Omega}\varphi_i^2dx.
\end{equation*}
From \eqref{equ-singleL}, it can be obtained that
\begin{equation*}
F_{uu}\left(d_i,u_*\right)[\varphi_i,\varphi_i]=rf_{uu}\varphi_i^2.
\end{equation*}
Therefore,
\begin{equation*}
d_i'(0)=\dfrac{rf_{uu}\ds\int_{\Omega}\varphi_i^3dx}{2\la_i\ds\int_{\Omega}\varphi_i^2dx}=\dfrac{d_if_{uu}\ds\int_{\Omega}\varphi_i^3dx}{2 f_u\ds\int_{\Omega}\varphi_i^2dx},
\end{equation*}
where $d_i=rf_u/\la_i$ is applied. Then, according to \cite{Crandall1973,Shi1999}, a transcritical steady state bifurcation occurs at $d=d_i$ if $d_i'(0)\not=0$. %Moreover, the bifurcation is supercritical if $d_i'(0)<0$ and subcritical if $d_i'(0)>0$.

If $d_i'(0)=0$, then we need to calculate $d_i''(0)$ to determine the bifurcation direction. %In \cite{Jin2013}, a calculation for the second derivative of the bifurcation parameter is presented for a general reaction-diffusion system without the spatial average effect. Now, we will perform the calculation when the spatial average effect is present in the system.
According to \cite{Shi1999}, $d_i''(0)$ takes the following form:
\begin{equation}\label{equ-singled''}
d_i''(0)=-\dfrac{\left\langle y,\ F_{uuu}\left(d_i,u_*\right)[\varphi_i,\varphi_i,\varphi_i]\right\rangle+3\left\langle y,\ F_{uu}\left(d_i, u_*\right)[\varphi_i,\eta]\right\rangle}{3\left\langle y,\ F_{du}\left(d_i,u_*\right)[\varphi_i]\right\rangle},
\end{equation}
where $\eta$ is the unique solution of
\begin{equation}\label{equ-singleTheta}
F_{uu}\left(d_i,u_*\right)[\varphi_i,\varphi_i]+F_{u}\left(d_i,u_*\right)[\eta]=0,
\end{equation}
which is equivalent to \eqref{equ-singletheta}.
By \eqref{equ-singleL}, we have
\begin{equation*}
F_{uu}(d_i,u_*)[\varphi_i,\eta]=rf_{uu}\varphi_i\eta+rf_{u\bar{u}}\varphi_i\bar{\eta},
\end{equation*}
and
\begin{equation*}
F_{uuu}(d_i,u_*)[\varphi_i,\varphi_i,\varphi_i]=rf_{uuu}\varphi_i^3.
\end{equation*}
Substituting them into \eqref{equ-singled''}, we obtain \eqref{d''}.
From \cite{Shi1999}, $d_i''(0)<0$ implies a supercritical pitchfork type bifurcation occurs and $d_i''(0)>0$ implies a subcritical pitchfork type bifurcation occurs.

\noi\textbf{Step 5.} The bifurcating solutions on $\Gamma_i$ with $i\ge 2$ are all unstable as the trivial solution $(d,u_*)$ is unstable for $0<d<d_1$ (Lemma \ref{lem:2.1}). The stability of bifurcating non-constant steady state solutions on $\Gamma_1$  can be determined by the two eigenvalue problems (see \cite{Crandall1973})
\begin{equation*}
\begin{aligned}
F_{u}(d,u_*)[\psi(d)]=M(d)K[\psi(d)],~\textrm{for}~d\in(d_1-\epsilon,d_1+\epsilon),\\
F_{u}(d_1(s),u_1(s,\cdot))[\Psi(s)]=\mu(s)K[\Psi(s)],~\textrm{for}~s\in(-\delta,\delta),
\end{aligned}
\end{equation*}
where $K:X\to Y$ is inclusion map $K(u)=u$, $M(d)$ and $\mu(s)$ satisfy $M(d_1)=\mu(0)=0$ and $\psi(d_1)=\Psi(0)=\varphi_1$.
By applying Corollary 1.13 and Theorem 1.16 in \cite{Crandall1973} or Theorem 5.4 in \cite{LiuShi2018}, the stability of $(d_1(s),u_1(s,\cdot))$ can be determined by the sign of $\mu(s)$ which satisfies
\begin{equation}\label{formula}
\lim_{s\rightarrow0}\frac{-sd_1'(s)M'(d_1)}{\mu(s)}=1.
\end{equation}

It is easy to calculate that $M(d)=rf_u-d\la_1$ with $\psi(d)=\varphi_1$, so $M'(d_1)=-\la_1<0$. Thus \eqref{formula} implies that ${\rm Sign}(\mu(s))={\rm Sign}(sd_1'(s))$. When $d_1'(0)=0$ and $d_1''(0)<0$, we have $sd_1'(s)<0$  so $\mu(s)<0$ for all $s\ne 0$, hence a supercritical pitchfork bifurcation occurs.  Similarly when $d_1'(0)=0$ and $d_1''(0)>0$, all bifurcating steady states are unstable for $s\ne 0$. The case for $d_1'(0)\ne 0$ can be obtained in a similar way as well.
\end{proof}

We apply the results in Theorems \ref{theorem-single} and \ref{theorem-scalarbifurcation} to the following two examples.
\begin{example}\label{exm2.1}
The following diffusive population model was considered in \cite{Furter1989}:
\begin{equation}\label{equ-exampleone}
\begin{cases}
u_{t}=d\Delta u+r u(1+au-b\bar{u}), & x\in\Om,\ t>0,\\
\partial_{\nu}u=0, & x\in\partial\Om,\ t>0,
\end{cases}
\end{equation}
where $a,~b,~r>0$ are constants, and $d$ is the diffusion coefficient. The growth rate per capita $r(1+au-b\bar{u})$ in \eqref{equ-exampleone} has a nonlocal crowding effect $-rb\bar{u}$ but also a localized positive dependent term $rau$.
When $b>a$, Eq. \eqref{equ-exampleone} has a unique positive constant equilibrium $u_*=1/(b-a)$, and we can calculate that $f_u(u_*,u_*)=\dfrac{ a}{b-a}>0$ and $f_u(u_*,u_*)+f_{\bar{u}}(u_*,u_*)=-1<0$. So from Theorem \ref{theorem-single}, $u=u_*$ is locally asymptotically stable when $d>d_1$ and it is unstable when $0<d<d_1$, where $d_1=\dfrac{ ar}{(b-a)\la_1}$. Assume $m=1$ and $\Omega=(0,l\pi)$ for some $l>0$, $d_i=\dfrac{arl^2}{(b-a)i^2}$ for $i\in \N$ and the corresponding eigenfunction at $d=d_i$ is $\cos(ix/l)$. From Theorem \ref{theorem-scalarbifurcation} and the fact that $\ds\int_{0}^{l\pi}\cos^3(x/l)dx=0$, we find that $d_1'(0)=0$; and from \eqref{d''}, \begin{equation*}\label{equ-ex1fderi}
f_{uu}=2a,~f_{u\bar{u}}=-b,~f_{uuu}=0,
\end{equation*}
and
\begin{equation*}\label{equ-ex1theta}
\theta=-\dfrac{f_{uu}}{2(f_u+f_{\bar{u}})}+\dfrac{f_{uu}}{6f_u}\cos\left(\frac{2ix}{l}\right),
\end{equation*}
we obtain that $d_1''(0)=\ds\dfrac{b}{(a-b)u_*}<0$. Then Theorem \ref{theorem-scalarbifurcation} shows that a supercritical pitchfork type steady state bifurcation occurs for system \eqref{equ-exampleone} at $d=d_1$, and the bifurcating non-homogeneous steady states are locally asymptotically stable (see Fig. \ref{fig:1} for numerical simulation).
\end{example}

\begin{figure}[htp]
\centering
\subfloat[]{\includegraphics[scale=0.45]{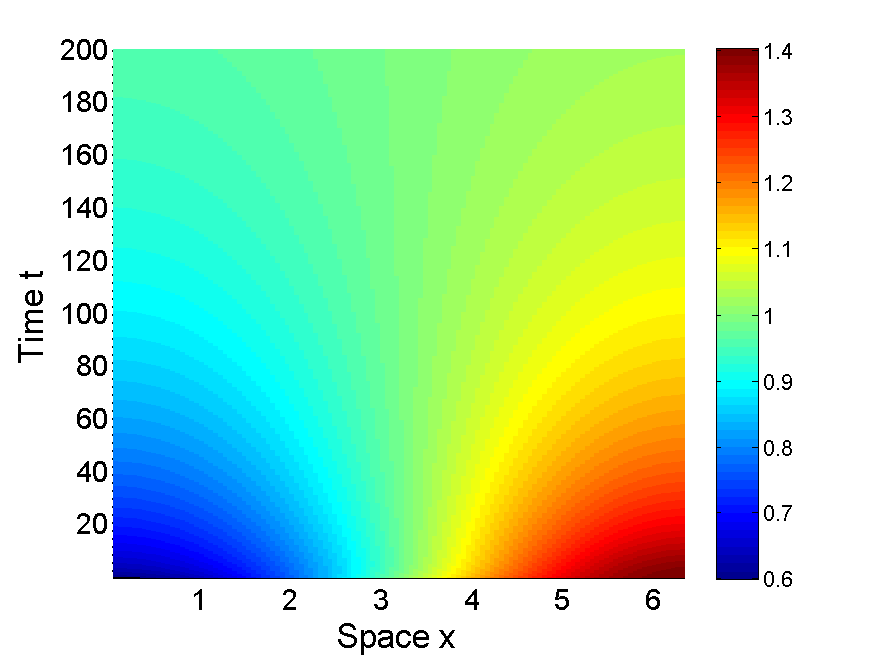}}
\subfloat[]{\includegraphics[scale=0.45]{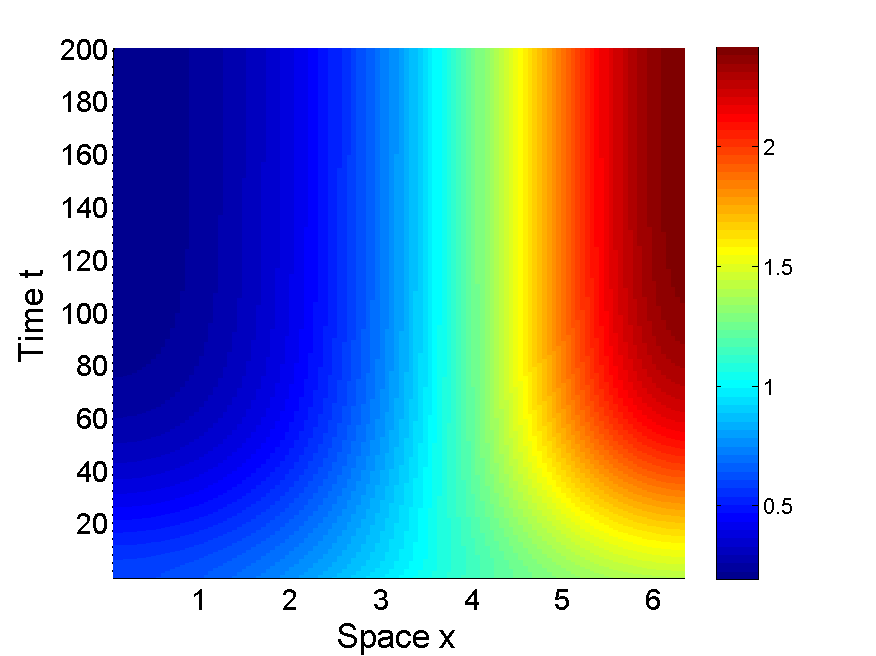}}
  \caption{Dynamics of Eq. \eqref{equ-exampleone} with  $a=0.1,~b=1.1,~r=1$ and $\Om=(0,2\pi)$: (Left) convergence to a constant steady state when $d=0.45>d_1=0.4$; (Right) convergence to a non-constant steady state when $d=0.3<d_1$.}\label{fig:1}
\end{figure}

\begin{example}\label{example-singletwo}
Consider the logistic type model:
\begin{equation}\label{equ-exampletwogeneral}
\begin{cases}
u_{t}=d\Delta u+a-b\bar{u}-cu-d\bar{u}^2-e u\bar{u}, & x\in\Om,\ t>0,\\
\partial_{\nu}u=0, & x\in\partial\Om,\ t>0,
\end{cases}
\end{equation}
where $a,~b,~c,~d,~e$ are all constants. We assume that
\begin{equation}\label{ade}
  a>0, \;\; d+e>0.
\end{equation}
It is clear that under \eqref{ade}, there is a unique positive constant steady state $u=u_*$ satisfying
$a-(b+c)u-(d+e)u^2=0$. Since $f_u=-(c+eu_*)<0$ and $f_{\bar{u}}=-(b+2du_*+eu_*)<0$, $u=u_*$ is locally asymptotically stable for all $d>0$ from Theorem \ref{theorem-single} (i) when \eqref{ade} is satisfied.
Indeed the constant steady state  $u=u_*$ is globally asymptotically stable as in the following proposition, and its proof is included in the Appendix.

\begin{proposition}\label{theorem-global}
The unique positive constant steady state $u=u_*$ of Eq. \eqref{equ-exampletwogeneral} is globally asymptotically stable for all non-negative initial conditions when \eqref{ade} is satisfied.
\end{proposition}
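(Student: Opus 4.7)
The plan is to split $u(x,t)$ into its spatial average $\bar{u}(t)$ and the fluctuation $v(x,t):=u(x,t)-\bar{u}(t)$, and to handle the two pieces separately — the mean by reducing to a scalar ODE, the fluctuation by an energy estimate for a linear non-autonomous heat equation with zero mean.

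\textit{Step 1: scalar ODE for the mean.} Integrate \eqref{equ-exampletwogeneral} over $\Om$; the no-flux boundary condition kills the diffusion term, and since $\bar u$ is spatially constant one has $\overline{u\bar u}=\bar u^{\,2}$ and $\overline{\bar u^{\,2}}=\bar u^{\,2}$, so
\begin{equation*}
\bar u'(t)=a-(b+c)\bar u(t)-(d+e)\bar u(t)^{2}.
\end{equation*}
Under \eqref{ade} the right-hand side is a downward-opening quadratic whose product of roots $-a/(d+e)$ is negative, so it has exactly one positive root, which must be $u_*$. Consequently the RHS is positive for $0\le \bar u<u_*$ and negative for $\bar u>u_*$; any trajectory with $\bar u(0)\ge 0$ is therefore monotonically driven to $u_*$, and $\bar u(\cdot)$ is uniformly bounded on $[0,\infty)$.

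\textit{Step 2: linear PDE for the fluctuation.} Subtracting the ODE for $\bar u$ from \eqref{equ-exampletwogeneral} and using the identity
\begin{equation*}
f(u,\bar u)-\tfrac{1}{|\Om|}\ds\int_{\Om} f(u,\bar u)\,dx=-\bigl(c+e\bar u(t)\bigr)(u-\bar u)=-\bigl(c+e\bar u(t)\bigr)v
\end{equation*}
yields the linear parabolic problem
\begin{equation*}
v_t=d\Delta v-\bigl(c+e\bar u(t)\bigr)v,\qquad \partial_\nu v\big|_{\partial\Om}=0,\qquad \ds\int_{\Om} v(x,t)\,dx\equiv 0.
\end{equation*}
Multiplying by $v$ and integrating, the Neumann BC together with the Poincaré inequality $\int_{\Om}|\nabla v|^{2}\,dx\ge \la_{1}\int_{\Om}v^{2}\,dx$ (valid precisely because $\bar v=0$) gives
\begin{equation*}
\f{d}{dt}\ds\int_{\Om} v^{2}\,dx\le -2\bigl[d\la_{1}+c+e\bar u(t)\bigr]\ds\int_{\Om} v^{2}\,dx.
\end{equation*}
Since $\bar u(t)\to u_*$ by Step 1 and $c+e u_*>0$ (this is the $f_u(u_*,u_*)<0$ condition underlying the local stability claim), there exist $T_{0}>0$ and $\sigma>0$ with $d\la_{1}+c+e\bar u(t)\ge \sigma$ for all $t\ge T_{0}$; Grönwall's inequality then forces $\|v(\cdot,t)\|_{L^{2}(\Om)}\to 0$ exponentially as $t\to\infty$.

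\textit{Step 3: conclusion and the main difficulty.} Combining Steps 1 and 2 one obtains $u(\cdot,t)=\bar u(t)+v(\cdot,t)\to u_{*}$ in $L^{2}(\Om)$; a standard bootstrap using parabolic regularity for the linear equation for $v$ upgrades this to uniform convergence, which combined with Step 1 delivers global asymptotic stability. The main obstacle is the transient interval $[0,T_{0}]$, on which the coefficient $c+e\bar u(t)$ can be negative (if, say, $e<0$ and $\bar u(0)$ is large), so the energy inequality does not a priori provide decay there; one must use Grönwall with the uniform bound on $\bar u$ from Step 1 to control $\|v(\cdot,t)\|_{L^{2}}$ on $[0,T_{0}]$, after which the exponential decay on $[T_{0},\infty)$ drives everything to zero. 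This transient bound, together with the implicit use of $c+eu_*>0$, is the only delicate point of the argument.
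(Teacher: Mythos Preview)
Your argument is correct and shares Step~1 with the paper: both integrate \eqref{equ-exampletwogeneral} to obtain the scalar ODE $\bar u'=a-(b+c)\bar u-(d+e)\bar u^{2}$ and deduce $\bar u(t)\to u_*$. The difference lies in how the full $u(x,t)$ is then pulled to $u_*$. The paper does \emph{not} split off the fluctuation; instead it rewrites \eqref{equ-exampletwogeneral} as the linear non-autonomous equation $u_t=d\Delta u+A(t)-B(t)u$ with $A(t)=a-b\bar u-d\bar u^{2}$, $B(t)=c+e\bar u$, and then, once $A(t),B(t)$ are within $\epsilon$ of their limits $\tilde A,\tilde B$, sandwiches $u$ between the spatially constant upper and lower solutions solving $u_1'=(\tilde A+\epsilon)-(\tilde B-\epsilon)u_1$ and $u_2'=(\tilde A-\epsilon)-(\tilde B+\epsilon)u_2$, which both tend to $\tilde A/\tilde B=u_*$ as $\epsilon\to 0$. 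This comparison argument gives uniform convergence directly, at the price of using the maximum principle and the same implicit hypothesis $\tilde B=c+eu_*>0$ that you identified. Your energy estimate for the zero-mean fluctuation $v$ is cleaner analytically and would extend to settings without a comparison principle, but it costs you the extra parabolic-regularity bootstrap from $L^{2}$ to $L^{\infty}$; the paper avoids this by working with pointwise bounds from the start.
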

\end{example}

As an application of \eqref{equ-exampletwogeneral} and Proposition \ref{theorem-global}, we consider the following model proposed in \cite{Altschuler2008}:
\begin{equation}\label{equ-exampletwo}
\begin{cases}
u_{t}=d\Delta u+k_{on}(1-\bar{u})+k_{fb}(1-\bar{u})u-k_{off}u, & x\in\Om,\ t>0,\\
\partial_{\nu}u=0, & x\in\partial\Om,\ t>0.
\end{cases}
\end{equation}
Here $u$ is the density of membrane-bound molecules and $\bar{u}:=|\Om|^{-1}\ds\int_{\Om}u(x,t)dx$ denotes the total density of cytoplasmic molecules. From \cite{Altschuler2008}, the four terms in Eq. \eqref{equ-exampletwo} can be interpreted as: (1) $D$ is the lateral diffusion rate of molecules; (2) $k_{on}$ stands for the spontaneous association of cytoplasmic molecules to random locations on the membrane; (3) $k_{fb}$ represents the recruitment of cytoplasmic molecules to the locations of membrane-bound signalling; (4) $k_{off}$ is the rate of random disassociation of molecules from the membrane. Then from Proposition \ref{theorem-global}, there is a unique positive constant steady state $u=u_{*}$ satisfying $k_{pf}u^2+(k_{on}+k_{off}-k_{pf})u-k_{on}=0$, and it is globally asymptotically stable thus there is no spatial pattern in \eqref{equ-exampletwo}.

\section{Pattern formation in two-species system}

For model \eqref{equ-main}, we assume that $f,~g$ are $C^{k}$ functions with ($k\ge 1$)  satisfying
\begin{equation*}
f(u_*,v_*,u_*,v_*,r)=0,~~g(u_*,v_*,u_*,v_*,r)=0,
\end{equation*}
which means that $(u_*,v_*)$ is a constant steady state of system \eqref{equ-main} for all $r>0$ as well as the localized system \eqref{equ-mainlocal}. We linearize Eq. \eqref{equ-main} at $(u_*,v_*)$ and obtain:
\begin{equation}\label{equ-linear}
\begin{pmatrix}\phi_t \\ \psi_t\end{pmatrix}= D\begin{pmatrix}\Delta \phi \\ \Delta \psi\end{pmatrix}+J_{U}\begin{pmatrix}\phi \\ \psi\end{pmatrix}+J_{\bar{U}}\begin{pmatrix}\bar{\phi} \\ \bar{\psi}\end{pmatrix},
\end{equation}
where
\begin{equation}\label{matrix}
D=\begin{pmatrix}d_1 & 0\\ 0 & d_2\end{pmatrix},~J_{U}=\begin{pmatrix}f_u & f_v\\ g_u & g_v\end{pmatrix},~J_{\bar{U}}=\begin{pmatrix}f_{\bar{u}} & f_{\bar{v}}\\ g_{\bar{u}} & g_{\bar{v}}\end{pmatrix},
\end{equation}
and $\bar{\phi}=\dfrac{1}{|\Om|}\ds\int_{\Om}\phi(x)dx,~\bar{\psi}=\dfrac{1}{|\Om|}\ds\int_{\Om}\psi(x)dx$.
On the other hand, the linearized equation of the localized system \eqref{equ-mainlocal} at $(u_*,v_*)$ is
\begin{equation}\label{equ-linear2}
\begin{pmatrix}\phi_t \\ \psi_t\end{pmatrix}= D\begin{pmatrix}\Delta \phi \\ \Delta \psi\end{pmatrix}+(J_{U}+J_{\bar{U}})\begin{pmatrix}\phi \\ \psi\end{pmatrix}.
\end{equation}

By using Fourier series, we have the following results regarding the eigenvalues of linearized systems \eqref{equ-linear} and
\eqref{equ-linear2}. The proof is similar to the one of \cite[Lemma 4.1]{Tian2018}, thus we omit the proof  here.
\begin{lemma}\label{lemma-Jacob}
Let $\la_i$ be  eigenvalues of \eqref{eig0} and let $\varphi_i$ be the corresponding eigenfunctions for $i\in \N_0$.  Define
\begin{equation}\label{equ-Jn}
J_0=\tilde{J}_0=J_{U}+J_{\bar{U}}, \;\;
J_i=-\la_i D+J_{U}, \;\;
\tilde{J}_i=-\la_i D+J_{U}+J_{\bar{U}}, \;\; i\in \N,
\end{equation}
then we have
\begin{description}
\item (i) if $\mu$ is an eigenvalue of \eqref{equ-linear} (or \eqref{equ-linear2}), then there exists $i\in\mathbb{N}_0$ such that $\mu$ is an eigenvalue of $J_i$ (or $\tilde{J}_i$) with the associated eigenvector $(a_i,b_i)\varphi_i$ (or $(\tilde{a}_i,\tilde{b}_i)\varphi_i$) which is not identically zero;
\item (ii) the local stability of the constant steady state $(u_*,v_*)$ is determined by the eigenvalues of $J_i$ (or $\tilde{J}_i$) for $i\in\mathbb{N}_0$; to be more specific: $(u_*,v_*)$ is locally asymptotically stable with respect to \eqref{equ-linear} (or \eqref{equ-linear2}) when all the eigenvalues of $J_i$ (or $\tilde{J}_i$) have negative real parts, and it is unstable with respect to \eqref{equ-linear} (or \eqref{equ-linear2}) when there exist some $i\in\mathbb{N}_0$ such that $J_i$ (or $\tilde{J}_i$) has at least one eigenvalue with positive real part.
\end{description}
\end{lemma}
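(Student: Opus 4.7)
The plan is to reduce the infinite-dimensional spectral problem for the linearized operators in \eqref{equ-linear} and \eqref{equ-linear2} to a countable family of $2\times 2$ matrix eigenvalue problems by expanding in the Neumann eigenbasis $\{\varphi_i\}_{i\in\N_0}$ of \eqref{eig0}. The crucial observation is that $\varphi_0$ is a nonzero constant while $\int_\Omega\varphi_i\,dx=0$ for every $i\ge 1$ (this follows from integrating $\Delta\varphi_i+\la_i\varphi_i=0$ and applying the Neumann boundary condition, giving $\la_i\int_\Omega\varphi_i\,dx=0$ with $\la_i>0$). Consequently the spatial averaging operator $u\mapsto \bar u$ acts as a rank-one projection onto the zeroth Fourier mode, so the nonlocal coupling genuinely affects only the $i=0$ coefficient while decoupling on every higher mode.

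To prove part (i) for \eqref{equ-linear}, I would write any eigenfunction as $(\phi,\psi)=\sum_{i\ge 0}(a_i,b_i)\varphi_i$ with not all $(a_i,b_i)$ zero. Substitution yields, for $i=0$, $\bar\phi=a_0\varphi_0$ and $\bar\psi=b_0\varphi_0$ so $(a_0,b_0)^T$ solves $(J_U+J_{\bar U})(a_0,b_0)^T=\mu(a_0,b_0)^T$, i.e., $J_0(a_0,b_0)^T=\mu(a_0,b_0)^T$; for $i\ge 1$, the zero-mean property kills $\bar\phi,\bar\psi$ in the $\varphi_i$-component, leaving $(-\la_i D+J_U)(a_i,b_i)^T=\mu(a_i,b_i)^T$, i.e., $J_i(a_i,b_i)^T=\mu(a_i,b_i)^T$. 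Since the expansion is nontrivial, at least one $(a_i,b_i)\neq(0,0)$ and that $i$ witnesses $\mu\in\sigma(J_i)$. Conversely, any eigenvalue of any $J_i$ is realized by the eigenfunction $(a_i,b_i)\varphi_i$. The argument for \eqref{equ-linear2} is identical, with $J_U$ replaced by $J_U+J_{\bar U}$ throughout, producing $\tilde J_i$.

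For part (ii), completeness of $\{\varphi_i\}$ as an orthonormal basis of $L^2(\Omega)$, together with smoothing by the analytic semigroup generated by the sectorial operator $D\Delta$ with Neumann boundary conditions (perturbed by the bounded zero-order term involving $J_U$ and the bounded rank-one nonlocal term involving $J_{\bar U}$), implies that the spectrum of the full linearized generator equals $\bigcup_{i\in\N_0}\sigma(J_i)$ for \eqref{equ-linear} and $\bigcup_{i\in\N_0}\sigma(\tilde J_i)$ for \eqref{equ-linear2}. Linear stability of $(u_*,v_*)$ then reduces to all eigenvalues of every $J_i$ (or $\tilde J_i$) lying in the open left half-plane, and instability follows as soon as some $J_i$ (or $\tilde J_i$) has an eigenvalue with positive real part.

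The only nonroutine step is the decoupling observation that $\overline{\varphi_i}=0$ for $i\ge 1$, which makes the nonlocal operator diagonal in the Fourier basis; once that is in place the rest is a direct transcription of the standard Turing reduction, which is why the authors can legitimately omit the proof and cite the analogous Lemma~4.1 of \cite{Tian2018}.
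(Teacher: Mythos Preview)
Your proposal is correct and is precisely the standard Fourier-mode decoupling argument that the paper defers to by citing \cite[Lemma~4.1]{Tian2018} in lieu of a written proof. The key observation you isolate---that $\overline{\varphi_i}=0$ for $i\ge 1$ so the averaging operator projects onto the zeroth mode only---is exactly what makes the nonlocal term block-diagonal in the Neumann eigenbasis, and the remainder is the usual reduction; there is nothing to add or correct.
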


Lemma \ref{lemma-Jacob} reduces the stability with respect to PDE model \eqref{equ-main} or \eqref{equ-mainlocal} to the stability of infinitely many $2\times 2$ matrices $J_i$ or $\tilde{J}_i$, which can be determined by the trace ($Tr$) and determinant ($Det$) of the matrix:
\begin{equation}\label{TD}
  T_i=Tr(J_i), \;\; D_i=Det(J_i), \;\; \tilde{T}_i=Tr(\tilde{J}_i), \;\; \tilde{D}_i=Det(\tilde{J}_i), \;\; i\in \N_0.
\end{equation}
For the convenience of later discussion, we present $T_i$ and $D_i$ as continuous functions of $p$ here:
\begin{equation}\label{TD2}
T(p)=f_u+g_v-(d_1+d_2)p,~D(p)=d_1d_2p^2-(d_1g_v+d_2f_u)p+f_ug_v-g_uf_v.
\end{equation}
And we define
\begin{equation}\label{equ-Delta}
\Delta=(d_1g_v+d_2f_u)^2-4d_1d_2(f_ug_v-g_uf_v),
\end{equation}
and denote the roots of $T(p)$ and $D(p)$ (when $\Delta>0$) as
\begin{equation}\label{equ-TDroots}
p_*=\dfrac{f_u+g_v}{d_1+d_2},~p_{\pm}=\dfrac{(d_1g_v+d_2f_u)\pm\sqrt{\Delta}}{2d_1d_2},
\end{equation}
note that $T(\la_i)=T_i$ and $D(\la_i)=D_i$  for $i\in\mathbb{N}_0$.

To state a general criterion for the pattern formation of system \eqref{equ-main}, we recall some definitions and results about real-valued square matrices, which will help us to determine the stability of the constant steady state $(u_*,v_*)$. Denote $M_n(\mathbb{R})$ as the set of all $n\times n$ real matrices for $n\ge 2$, then we introduce the following definitions for the stability/instability of a real-valued matrix.

\begin{definition}\label{def-instability}
Let $A,~D\in M_n(\mathbb{R})$, and assume that $D$ is diagonal with positive entries. For $p\ge 0$, we denote the eigenvalues of $A-p D$ by $\mu_j(p)$ for each $1\leq j\leq n$.
\begin{description}
 \item (i) $A$ is \textbf{stable} if $\mathcal{R}e(\mu_j(0))<0$ for all $1\leq j\leq n$;
 \item (ii) $A$ is \textbf{strongly stable} if $\mathcal{R}e(\mu_j(p))<0$ for all $1\leq j\leq n$ and $p>0$, that is $A-p D$ is stable for all $p>0$;
 \item (iii) $A$ has \textbf{steady state instability }if $A$ is stable and there exists $p>0$ such that $\mu_j(p)>0$ for some $1\leq j\leq n$;
 \item (iv) $A$ has \textbf{wave instability} if $A$ is stable and there exists $p>0$ such that $\mu_j(p)=\alpha+i\beta$ with $\alpha>0$ and $\beta\ne 0$ for some $1\leq j\leq n$.
 \end{description}
\end{definition}
When applying these definitions to the linearized system of \eqref{equ-main} for some $J_i$ with $i\ge 1$ and diffusion matrix $D$, spatial or spatiotemporal patterns could emerge if $J_i$ is unstable. The steady state instability corresponds to generation of mode-$i$ spatial patterns through a symmetry-breaking bifurcation of spatially non-constant steady states, and the wave instability corresponds to creation of mode-$i$ time-periodic spatiotemporal patterns through a symmetry-breaking Hopf bifurcation of spatially non-constant periodic orbits. Indeed the roots $p_*,~p_{\pm}$  in \eqref{equ-TDroots} define two intervals of wave-number for pattern formation: steady state wave number
\begin{equation}\label{IS}
  I_S=\{p>0:Det(J_U-pD)<0\}=(p_{-},p_{+})\cap(0,+\infty),
\end{equation}
and cycle wave number
\begin{equation}\label{IH}
  I_H=\{p>0:Tr(J_U-pD)>0, \; Det(J_U-pD)>0\}=(0,p_*)\backslash [p_{-},p_{+}].
\end{equation}
A mode-$i$ steady state pattern may exist if $\la_i\in I_S$, and a mode-$i$ periodic orbit may exist if $\la_i\in I_H$.

We have the following classification results on the possible instability occurring in \eqref{equ-main}.

\begin{theorem}\label{theorem-instability} Suppose that $(u_*,v_*)$ is a constant steady state of \eqref{equ-main}.
Let $J_U,~J_{\bar{U}},~\Delta$ be defined in \eqref{matrix},\eqref{equ-Delta}, and let $p_*,~p_{\pm}$ be defined in \eqref{equ-TDroots}. We denote the two intervals of wave-number for pattern formation by  $I_S$ and  $I_H$ as in \eqref{IS} and \eqref{IH}. Suppose that $J_U+J_{\bar{U}}$ is stable and $J_U$ is not strongly stable, then we have the following scenarios for the pattern formation of system \eqref{equ-main} from the stability of matrix $J_U-pD$ (based on the assumption that the spatial domain is properly chosen):
\begin{description}
\item (i) $Det(J_U)<0$ and $Tr(J_U)<0$:  the steady state instability may occur but not the wave instability with $I_{S}=(0,p_+)$;
\item (ii) $Det(J_U)>0$ and $Tr(J_U)>0$: (a) if $\Delta\leq0$, or $\Delta>0$ and $d_1g_v+d_2f_u<0$, the wave instability may occur but not the steady state instability with $I_{H}=(0,p_*)$; (b) if $\Delta>0$, $d_1g_v+d_2f_u>0$ and $p_{*}>p_{+}$, both the  wave and the steady state instability may occur with $I_{S}=(p_-,p_+)$ and $I_{H}=(0,p_-)\cup(p_+,p_*)$; (c) if $\Delta>0$, $d_1g_v+d_2f_u>0$ and $p_{-}<p_{*}<p_{+}$, both the  wave and the steady state instability may occur with $I_{S}=(p_-,p_+)$ and $I_{H}=(0,p_-)$; (d) if $\Delta>0$, $d_1g_v+d_2f_u>0$ and $p_{*}<p_{-}$, both the  wave and the steady state instability may occur with $I_{S}=(p_-,p_+)$ and $I_{H}=(0,p_*)$;
\item (iii) $Det(J_U)<0$ and $Tr(J_U)>0$: (a) if $p_*\leq p_+$, the steady state instability may occur but not the wave instability with $I_{S}=(0,p_+)$; (b) if $p_*>p_+$, both the wave and the steady state instability may occur with $I_{S}=(0,p_+)$ and $I_{H}=(p_+,p_*)$;
\item (iv) $Det(J_U)>0$ and $Tr(J_U)<0$: (a) if $\Delta\leq0$, or $\Delta>0$ and $d_1g_v+d_2f_u<0$, neither the steady state nor the wave instability occurs; (b) if $\Delta>0$ and $d_1g_v+d_2f_u>0$,  the steady state instability may occur but not the wave instability with $I_{S}=(p_-,p_+)$.
\end{description}
\end{theorem}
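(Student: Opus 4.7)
The plan is to reduce the question to an eigenvalue analysis of the one-parameter family of $2\times 2$ matrices $M(p)=J_U-pD$ via Lemma~\ref{lemma-Jacob}, and then to carry out a case analysis driven by the signs of $Tr(J_U)$ and $Det(J_U)$. Since $M(p)$ is $2\times 2$, its spectrum is completely determined by $T(p)$ and $D(p)$ in \eqref{TD2}: a real positive eigenvalue occurs precisely when $D(p)<0$, while a complex conjugate pair with positive real part requires $T(p)>0$ and $D(p)>0$ (plus $T(p)^{2}<4D(p)$, which is automatic on an open sub-interval whenever the other two hold). Thus the wave-number sets in \eqref{IS} and \eqref{IH} capture exactly the steady-state and wave instability of $M(\la_i)$; by Lemma~\ref{lemma-Jacob} these translate directly to the corresponding instability of $(u_*,v_*)$ in \eqref{equ-main} whenever some Neumann eigenvalue $\la_i$ actually lies in $I_S$ or $I_H$, which is the content of the proviso that ``the spatial domain is properly chosen'' and can be arranged by rescaling $\Omega$.

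Next I would analyze $T(p)$ and $D(p)$ as polynomials in $p$. The function $T(p)$ is linear and strictly decreasing with $T(0)=Tr(J_U)$ and unique zero $p_*$, so $p_*>0$ forces $Tr(J_U)>0$. The function $D(p)$ is an upward-opening parabola with $D(0)=Det(J_U)$ and discriminant $\Delta$; when $\Delta>0$, the product of its roots equals $Det(J_U)/(d_1d_2)$ and their sum equals $(d_1g_v+d_2f_u)/(d_1d_2)$. Hence $D$ has exactly one positive root $p_+$ iff $Det(J_U)<0$, and two positive roots $p_-<p_+$ iff $Det(J_U)>0$, $\Delta>0$, and $d_1g_v+d_2f_u>0$; in all other subcases with $Det(J_U)>0$, $D(p)>0$ for all $p>0$. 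With this bookkeeping, $I_S$ and the candidate interval for $I_H$ are immediate in each configuration.

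The four scenarios in the theorem are then read off from the sign combinations of $(Det(J_U),Tr(J_U))$. Cases (i), (iii), and (iv) are essentially one-line consequences: in (i) one has $I_S=(0,p_+)$ and $I_H=\emptyset$ since $p_*\le 0$; in (iv) the same sign of $T$ forces $I_H=\emptyset$, and $I_S$ is nonempty exactly in sub-case (b); in (iii) the interval $(0,p_+)$ yields $I_S$ while the leftover slice $(p_+,p_*)$ gives $I_H$, which is nonempty precisely when $p_*>p_+$. The main bookkeeping obstacle is case (ii), where both $p_-$ and $p_+$ are positive and $I_H=(0,p_*)\setminus[p_-,p_+]$ bifurcates into the three configurations (b)--(d) according to whether $p_*$ lies above, between, or below $p_\pm$; enumerating these positions and reading off the connected components of $I_H$ is tedious but conceptually routine once the positions of the roots are catalogued. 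Finally, I would note that the hypothesis that $J_U$ is not strongly stable is what guarantees at least one of these intervals is nonempty: it excludes, for example, the degenerate part of case (iv)(a) in which $M(p)$ would be stable for all $p>0$ and no pattern could emerge.
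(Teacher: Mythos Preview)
Your proposal is correct and follows essentially the same route as the paper: both reduce to analyzing the signs of the linear function $T(p)$ and the upward parabola $D(p)$ for $p>0$, then carry out the case split on $(\mathrm{sgn}\,Det(J_U),\mathrm{sgn}\,Tr(J_U))$ and, within cases (ii)--(iv), on the location of $p_*$ relative to $p_\pm$. Your explicit use of Vieta's formulas for the roots of $D(p)$ and of the $2\times 2$ eigenvalue criteria makes the bookkeeping slightly more transparent than the paper's figure-driven argument, but the substance is identical.
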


\begin{proof}
According to the values of $Det(J_U)$ and $Tr(J_U)$, we discuss the possible bifurcation scenarios shown in Fig. \ref{fig-2}.

For case (i), that is, $Det(J_U)<0$ and $Tr(J_U)<0$, we see that $T(p)<0$ holds for all $p>0$, thus the wave instability is impossible. The function $D(p)$ is  quadric in $p$, and as $Det(J_U)<0$, it has a unique positive root $p_+$. If there exists some $\la_i\in(0,p_+)$, then steady instability may occur, and it is clear that the steady state wave number interval $I_S=(0,p_+)$, the situation is demonstrated in Fig. \ref{fig-2} (i).

When it comes to case (ii), that is, $Det(J_U)>0$ and $Tr(J_U)>0$, the situation is more complicated. First, if either $\Delta\leq0$, or $\Delta>0$ and $d_1g_v+d_2f_u<0$ holds, then from Fig. \ref{fig-2} (ii-a1) and (ii-a2), we can see that $D(p)$ has no positive roots, thus the steady state instability cannot occur. However, in both situations, $T(p)$ has a positive root $p_*$, thus the wave instability is possible and the cycle wave number interval is $I_H=(0,p_*)$. If $\Delta>0$ and $d_1g_v+d_2f_u>0$ holds, $D(p)$ has two positive roots $p_{\pm}$, thus the steady state instability is possible and the steady state wave number interval is $I_S=(p_-,p_+)$. Though the wave instability can still occur, but the cycle wave number interval will be influenced by the distribution of $p_+$ and $p_*$: if $p_*>p_+$, that is the situation in Fig. \ref{fig-2} (iib), we have $I_{H}=(0,p_-)\cup (p_+,p_*)$; if $p_{-}<p_*<p_+$, that is the situation in Fig. \ref{fig-2} (iic), now $I_{H}=(0,p_-)$; and if $p_*<p_-$ (see Fig. \ref{fig-2} (iid)),  we have $I_{H}=(0,p_*)$.

For case (iii), that is, $Det(J_U)<0$ and $Tr(J_U)>0$. It is clear that both $D(p)$ and $T(p)$ have a unique positive root. When $p_*< p_+$ (see Fig. \ref{fig-2} (iii-a)), we can see that only the steady state instability can occur with $I_S=(0,p_+)$; when $p_*>p_+$ (see Fig. \ref{fig-2} (iii-b)), we see that both the wave and the steady state instability may occur with $I_{S}=(0,p_+)$ and $I_{H}=(p_+,p_*)$.

Finally, for case (iv), when $Det(J_U)>0$ and $Tr(J_U)<0$, if $\Delta\leq0$, or $\Delta>0$ and $d_1g_v+d_2f_u<0$ is satisfied, then both $T(p)$ and $D(p)$ have no positive roots, thus the constant equilibrium $(u_*,v_*)$ stays  stable and no instability occurs (see the demonstration in Fig.  \ref{fig-2} (iv-a1) and  (iv-a2)); if $\Delta>0$ and $d_1g_v+d_2f_u>0$ (see Fig. \ref{fig-2}), $D(p)$ has two positive roots, thus the steady state instability may occur for $I_S=(p_-,p_+)$.
\end{proof}

\begin{figure}[htp]
\centering
\subfloat[(i)]{\includegraphics[scale=0.4]{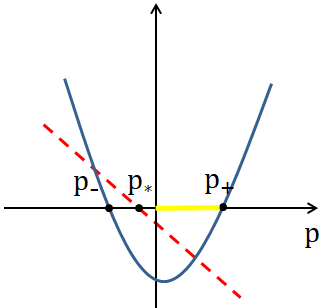}}
\subfloat[(ii-a1)]{\includegraphics[scale=0.4]{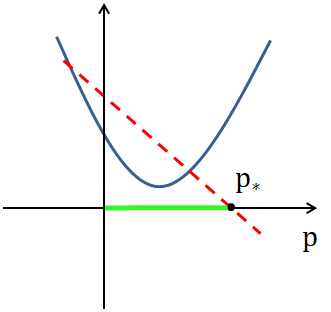}}
\subfloat[(ii-a2)]{\includegraphics[scale=0.4]{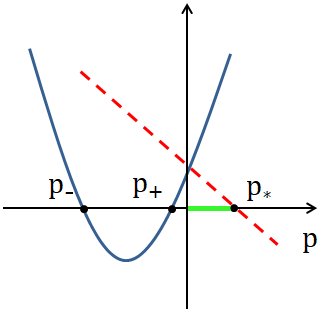}}
\subfloat[(ii-b)]{\includegraphics[scale=0.4]{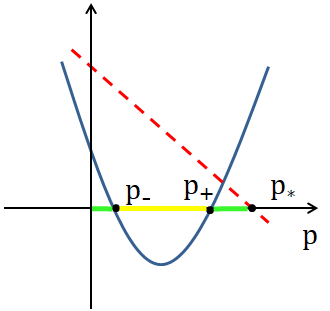}}\quad
\subfloat[(ii-c)]{\includegraphics[scale=0.4]{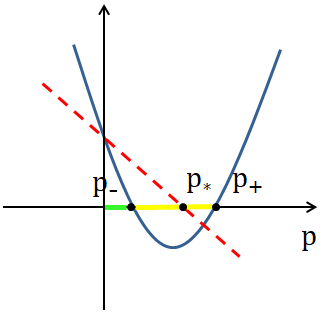}}
\subfloat[(ii-d)]{\includegraphics[scale=0.4]{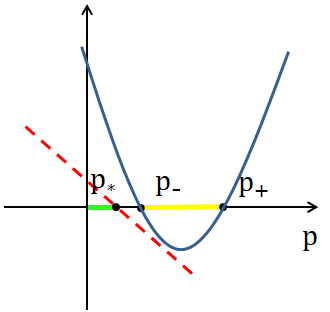}}
\subfloat[(iii-a)]{\includegraphics[scale=0.4]{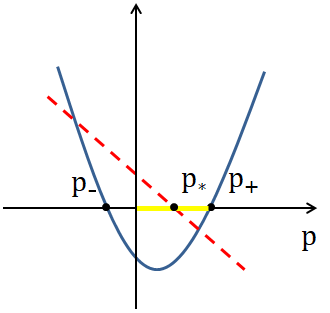}}
\subfloat[(iii-b)]{\includegraphics[scale=0.4]{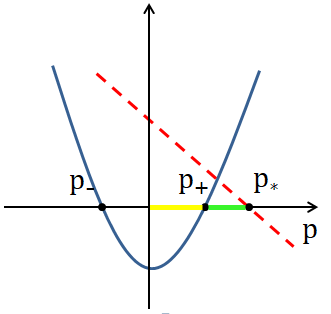}}\quad
\subfloat[(iv-a1)]{\includegraphics[scale=0.4]{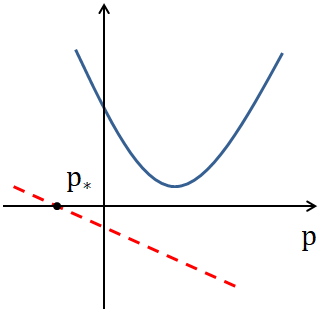}}
\subfloat[(iv-a2)]{\includegraphics[scale=0.4]{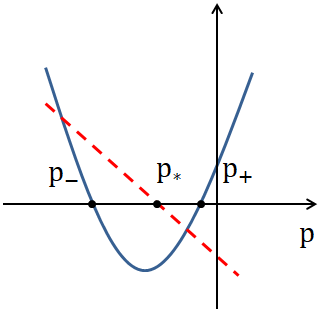}}
\subfloat[(iv-b)]{\includegraphics[scale=0.4]{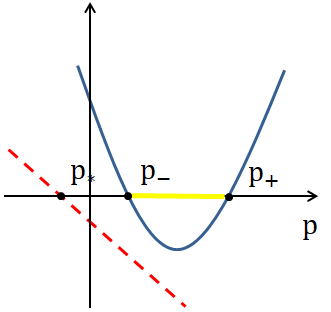}}\quad
\caption{The demonstration for the possible scenarios of the pattern formation in system \eqref{equ-main}. In each figure, $T(p)$ and $D(p)$ are described by blue solid curve and red dashed line, respectively. And, on the horizontal axis, the interval $I_S$ is marked by yellow color and $I_H$ is marked by green color. \label{fig-2}}
\end{figure}

As a comparison, we recall the classical Turing diffusion-induced instability result for a standard two-species reaction-diffusion system:
\begin{equation}\label{equ-main2}
\begin{cases}
u_{t}=d_1\Delta u+f(u,v,r), & x\in\Om,\ t>0,\\
v_{t}=d_2\Delta v+g(u,v,r), & x\in\Om,\ t>0,\\
\partial_{\nu}u=\partial_{\nu}v=0, & x\in\partial\Om,\ t>0,
\end{cases}
\end{equation}
and we use the same notation as above (or simply assuming $f,g$ are independent of $\bar{u},\;\bar{v}$), then we have the following results (as Turing \cite{Turing1952}).
 \begin{theorem}\label{theorem-instability2} Suppose that $(u_*,v_*)$ is a constant steady state of \eqref{equ-main2}.
Let $J_U,~\Delta$ be defined in \eqref{matrix},\eqref{equ-Delta}. Suppose that $J_U$ is stable (so $Det(J_U)>0$ and $Tr(J_U)<0$) and $J_U$ is not strongly stable, then  (a) if $\Delta\leq0$, or $\Delta>0$ and $d_1g_v+d_2f_u<0$, neither the steady state nor the wave instability occurs; (b) if $\Delta>0$ and $d_1g_v+d_2f_u>0$,  the steady state instability may occur but not the wave instability with $I_{S}=(p_-,p_+)$.
\end{theorem}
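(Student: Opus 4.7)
The proof proceeds in direct parallel to case (iv) of Theorem \ref{theorem-instability}, which I would simply specialize here since the system \eqref{equ-main2} is just \eqref{equ-main} with $J_{\bar U}=0$, making $J_U+J_{\bar U}=J_U$. The plan is to reduce the PDE stability question to the stability of the $2\times 2$ matrices $J_U-\lambda_i D$ via the Fourier/eigenfunction decomposition (an analog of Lemma \ref{lemma-Jacob} for \eqref{equ-main2}), and then to study the trace and determinant of $J_U-pD$ as continuous functions of the real variable $p\ge 0$, reading off whether positive roots exist in the appropriate intervals.

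Concretely, first I would write $T(p)=f_u+g_v-(d_1+d_2)p$ and $D(p)=d_1d_2p^2-(d_1g_v+d_2f_u)p+(f_ug_v-f_vg_u)$ as in \eqref{TD2}. Since the hypothesis $Tr(J_U)<0$ forces $T(0)<0$ and $T$ is linear with negative slope, we have $T(p)<0$ for all $p\ge 0$. By the wave-number characterization \eqref{IH}, $I_H=\emptyset$, so wave instability cannot occur in any subcase. This already rules out one of the two instability types throughout.

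Next I would study $D(p)$. Its leading coefficient $d_1d_2$ is positive, and the hypothesis $Det(J_U)>0$ says $D(0)>0$. The discriminant of $D$ (in $p$) is precisely $\Delta$ in \eqref{equ-Delta}, and the sum of its roots is $(d_1g_v+d_2f_u)/(d_1d_2)$. Hence, in part (a), either $\Delta\le 0$ (so $D(p)\ge 0$ for all real $p$) or $\Delta>0$ with $d_1g_v+d_2f_u<0$ (so both roots of $D$ are negative, again giving $D(p)>0$ for every $p>0$). In either subcase, $J_U-pD$ has positive determinant and negative trace for every $p>0$, so $I_S=\emptyset$ and no steady state instability can occur. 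In part (b), $\Delta>0$ together with $d_1g_v+d_2f_u>0$ yields two positive roots $p_{\pm}$ given by \eqref{equ-TDroots}, and $D(p)<0$ exactly on $(p_-,p_+)$. By \eqref{IS} we get $I_S=(p_-,p_+)$, so steady state instability is possible whenever some eigenvalue $\lambda_i$ of \eqref{eig0} falls in this interval (which can be arranged by suitable choice of the domain, as in Theorem \ref{theorem-instability}).

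The argument is entirely computational and I do not expect a genuine obstacle: the only minor care is to make sure the sign analysis of $D(p)$ correctly distinguishes the three cases according to $\Delta$ and the sign of $d_1g_v+d_2f_u$, and to emphasize that the hypothesis that $J_U$ is not strongly stable is exactly what guarantees that at least one of the wave-number intervals is nonempty (here forcing case (b)). I would close by noting that Theorem \ref{theorem-instability2} is strictly weaker than Theorem \ref{theorem-instability}(iv), which illustrates how the nonlocal term $J_{\bar U}$ opens up the additional scenarios (i)--(iii) absent from the classical Turing framework.
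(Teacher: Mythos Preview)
Your proposal is correct and matches the paper's approach exactly: the paper omits the proof, saying only that it is similar to that of Theorem \ref{theorem-instability}, and your argument is precisely the specialization of case (iv) there (with $J_{\bar U}=0$), carried out via the same trace/determinant analysis of $T(p)$ and $D(p)$. One very minor quibble: your closing remark that ``$J_U$ not strongly stable'' \emph{forces} case (b) is not quite right on the boundary $\Delta=0$ with $d_1g_v+d_2f_u>0$, where $D(p)$ has a positive double root giving a zero eigenvalue (so $J_U$ fails to be strongly stable) yet $I_S=\emptyset$ by the strict inequality in \eqref{IS}; but this does not affect the body of the argument.
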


The proof of Theorem \ref{theorem-instability2} is similar to that of Theorem \ref{theorem-instability} so it is omitted. Comparing these two results, one can see  that only the case $(iv)$ in Theorem \ref{theorem-instability} occurs for   Theorem \ref{theorem-instability2}, so the system with spatial average \eqref{equ-main} allows more possible pattern formation scenarios  than the classical reaction-diffusion system \eqref{equ-main2}. Also Theorem \ref{theorem-instability2} (and indeed Turing \cite{Turing1952}) shows that the wave stability is not possible for the classical two-species reaction-diffusion system \eqref{equ-main2}, but it is possible for the two species reaction-diffusion system with spatial average \eqref{equ-main}.

\begin{remark}\label{rem3}
\begin{enumerate}
  \item The conditions in Theorem \ref{theorem-instability} are necessary for pattern formation but not sufficient: These conditions  determine if $I_S$ or $I_H$ is non-empty, but whether the interval $I_S$ or $I_H$ contains eigenvalues $\la_i$ depends on the spatial domain $\Om$. When $I_S$ or $I_H$ is non-empty, one can rescale the domain $\Om$ through a dilation $\Om\mapsto l\Om:=\{lx:x\in\Om\}$ for $k>0$, then $I_S$ or $I_H$ must contain some eigenvalue $\la_i(l\Om)=l^{-2}\la_i(\Om)$ if $l$ is sufficiently large so all instability described in Theorem \ref{theorem-instability} can be achieved for the dilated domain $l\Om$.
  \item Results in Theorem \ref{theorem-instability} are stated for a fixed diffusion matrix $D$, but varying $D=diag(d_1,d_2)$ will change the value of $p_{\pm}$, $p_*$, $\Delta$ and $d_1g_v+d_2f_u$, which determine the type of instability in case $(ii)$, $(iii)$ and $(iv)$.
  \item A more detailed result of bifurcation of non-constant steady states or periodic orbits like Theorem \ref{theorem-scalarbifurcation} can also be stated for system \eqref{equ-main} by using either diffusion coefficients $d_1,~d_2$, or kinetic parameter $r$, or domain scaling parameter $l$ as the bifurcation parameter. But it is too tedious to state the results for every case in Theorem \ref{theorem-instability} so we will not give the whole list. Instead we demonstrate such detailed bifurcation analysis through two specific examples: cooperative Lotka-Volterra model (case $(i)$) and Rosenzweig-MacArthur predator-prey model (case $(ii)$) in the following sections.
\end{enumerate}
\end{remark}

\section{A nonlocal two-species cooperative Lotka-Volterra model}

In this section, we show that the spatial average can induce spatial patterns in a diffusive cooperative Lotka-Volterra system with nonlocal competition in one of the species. Here for simplicity, we assume that the spatial dimension $m=1$ and $\Om=(0,l\pi)$ for $l>0$, and the corresponding eigenvalues/eigenfunctions for the diffusion operator are $\la_j=j^2/l^2$ and $\varphi_j(x)=\cos(jx/l)$. Note that $l$ is a scaling parameter for the spatial domain as in Remark \ref{rem3}. The model on $(0,l\pi)$ is
\begin{equation}\label{equ-coopnonlocal}
\begin{cases}
u_{t}=\beta u_{xx}+u\left(1-\dfrac{a}{l\pi}\ds\int_0^{l\pi}u(x,t)dx+bv\right), & x\in(0,l\pi),\ t>0,\\
v_{t}=v_{xx}+v\left(1+cu-dv\right), & x\in(0,l\pi),\ t>0,\\
u_{x}(0,t)=u_{x}(l\pi,t)=0,~v_{x}(0,t)=v_{x}(l\pi,t)=0, & t>0,\\
u(x,0)=u_0(x)\ge0,~~v(x,0)=v_0(x)\ge0, & x\in[0,l\pi].
\end{cases}
\end{equation}
Here $u(x,t)$ and $v(x,t)$ are the densities of two cooperating populations, and all parameters $a,b,c,d,\beta$ are positive.

Before our study for the nonlocal system \eqref{equ-coopnonlocal}, first we give a brief description for its corresponding local system:
\begin{equation}\label{equ-cooplocal}
\begin{cases}
u_{t}=\beta u_{xx}+u\left(1-au+bv\right), & x\in(0,l\pi),\ t>0,\\
v_{t}=v_{xx}+v\left(1+cu-dv\right), & x\in(0,l\pi),\ t>0,\\
u_{x}(0,t)=u_{x}(l\pi,t)=0,~v_{x}(0,t)=v_{x}(l\pi,t)=0, & t>0,\\
u(x,0)=u_0(x)\ge0,~~v(x,0)=v_0(x)\ge0, & x\in[0,l\pi].
\end{cases}
\end{equation}
It is clear that system \eqref{equ-cooplocal} has three unstable constant equilibria: $(0,0),~(0,1/d),~(1/a,0)$
and a unique positive constant equilibrium $(u_*,v_*)$ which is locally asymptotically stable with
\begin{equation}\label{equ-coopequilibrium}
u_*=\dfrac{d+b}{ad-bc},~v_*=\dfrac{a+c}{ad-bc},
\end{equation}
when $ad-bc>0$ is satisfied. Furthermore, the global stability of $(u_*,v_*)$ with respect to \eqref{equ-cooplocal} for all $\beta>0$ can be obtained by the monotone dynamical systems theory or Lyapunov method \cite{Takeuchi1996,Smith1995}. It is also known that if \eqref{equ-cooplocal} has a stable equilibrium $(u(x),v(x))$ on a higher dimensional convex domain, then $(u(x),v(x))$ must be a constant one \cite{Kishimoto1985}.

For the nonlocal system \eqref{equ-coopnonlocal}, the linearization  at $(u_*,v_*)$ gives
\begin{equation}\label{matrix2}
D=\begin{pmatrix}\beta & 0\\ 0 & 1\end{pmatrix},~J_{U}+J_{\bar{U}}=\begin{pmatrix}-au_* & bu_*\\ cv_* & -dv_*\end{pmatrix},~J_{U}=\begin{pmatrix}0 & bu_*\\ cv_* & -dv_*\end{pmatrix}.
\end{equation}
Then $J_{U}+J_{\bar{U}}$ is stable as $ad-bc>0$, and $J_U$ satisfies $Tr(J_U)<0$ and $Det(J_U)<0$ so this example belongs to the case $(i)$ in Theorem \ref{theorem-instability}.

Following Lemma \ref{lemma-Jacob}, we obtain the characteristic equation  with the diffusion ratio $\beta$ taken as a parameter:
\begin{equation}\label{equ-coopchara}
\mu^{2}-T_{j}(\beta)\mu+D_{j}(\beta)=0,~j\in\mathbb{N}_{0},
\end{equation}
where
\begin{equation*}
T_{0}(\beta)=au_*+dv_*,~D_{0}(\beta)=(ad-bc)u_*v_*,
\end{equation*}
and for $j\ge1$,
\begin{equation*}
T_{j}(\beta)=(\beta+1)\frac{j^2}{l^2}+dv_*,~D_{j}(\beta)=\beta\frac{j^4}{l^4}+\beta dv_*\frac{j^2}{l^2}-bcu_*v_*
\end{equation*}
with $u_*$ and $v_*$ defined by \eqref{equ-coopequilibrium}. By letting $p=\dfrac{j^2}{l^2}$, we define the trace and determinant functions by
\begin{equation}\label{equ-coopDn}
T(\beta,p)=(\beta+1)p+dv_*,~D(\beta,p)=\beta p^{2}+\beta dv_*p-bcu_*v_*.
\end{equation}
From \eqref{equ-coopchara}, we know that $T_j(\beta)>0$ holds for any $j\in\mathbb{N}_0$ and $D_0(\beta)>0$, while the sign of $D_j(\beta)$ could change which may lead to steady state instability in the system \eqref{equ-coopnonlocal} but not wave instability (see Theorem \ref{theorem-instability} case $(i)$).

The following lemma about the property of the root of $D(\beta,p)$ is easy to obtain.

\begin{lemma}\label{lemma-coop}
Let $D(\beta,p)$ be defined in \eqref{equ-coopDn}, then it has a unique positive zero $p=p_{\sharp}(\beta)$ such that $D(\beta,p)<0$ for $p\in(0,p_{\sharp}(\beta))$ and $D(\beta,p)>0$ for $p\in(p_{\sharp}(\beta), +\infty)$. Moreover, $p_{\sharp}(\beta)$ is strictly decreasing in $\beta>0$,  $\lim\limits_{\beta\rightarrow 0}p_{\sharp}(\beta)=\infty$ and $\lim\limits_{\beta\rightarrow\infty}p_{\sharp}(\beta)=0$.
\end{lemma}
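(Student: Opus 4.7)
The plan is to treat $D(\beta,\cdot)$ as a quadratic polynomial in $p$ whose coefficients depend on $\beta$, and then extract all three conclusions either from an explicit root formula or from elementary monotonicity. First I would observe that, for each fixed $\beta>0$, the leading coefficient is $\beta>0$, the constant term equals $-bcu_*v_*<0$ (using that all parameters and $u_*,v_*$ are positive under the standing hypothesis $ad-bc>0$ from \eqref{equ-coopequilibrium}), and the partial derivative $\partial_p D(\beta,p)=2\beta p+\beta dv_*$ is strictly positive on $[0,\infty)$. Hence $D(\beta,\cdot)$ is strictly increasing on $[0,\infty)$, negative at $p=0$, and tends to $+\infty$ as $p\to\infty$. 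The intermediate value theorem then delivers a unique positive zero $p=p_\sharp(\beta)$, and the sign statement $D(\beta,p)<0$ for $p\in(0,p_\sharp(\beta))$ and $D(\beta,p)>0$ for $p\in(p_\sharp(\beta),+\infty)$ is immediate from the monotonicity.

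For the dependence of $p_\sharp(\beta)$ on $\beta$, I would solve the quadratic explicitly, obtaining
\[
p_\sharp(\beta)=\frac{-dv_*+\sqrt{d^2v_*^2+4bcu_*v_*/\beta}}{2}.
\]
Because $\beta\mapsto 4bcu_*v_*/\beta$ is strictly decreasing on $(0,\infty)$, so is the radicand, and therefore so is $p_\sharp(\beta)$. Letting $\beta\to 0^+$ drives the radicand to $+\infty$, which yields $p_\sharp(\beta)\to+\infty$; letting $\beta\to+\infty$ reduces the radicand to $d^2v_*^2$, whence $p_\sharp(\beta)\to(-dv_*+dv_*)/2=0$. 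As an alternative route, one may differentiate the identity $D(\beta,p_\sharp(\beta))\equiv 0$ implicitly to get
\[
p_\sharp'(\beta)=-\frac{p_\sharp^2+dv_*\,p_\sharp}{\beta\bigl(2p_\sharp+dv_*\bigr)}<0,
\]
which also confirms strict monotonicity.

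There is no substantive obstacle here: the argument is a routine analysis of a quadratic whose constant term has a fixed negative sign independent of $\beta$, so the quadratic always has exactly one positive and one negative root, and the closed form makes monotonicity and limits transparent. The only mild care required is to verify positivity of $u_*$ and $v_*$ from $ad-bc>0$ so that the constant term $-bcu_*v_*$ is unambiguously negative; all remaining steps are direct.
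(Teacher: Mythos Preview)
Your proposal is correct and follows essentially the same approach as the paper: both treat $D(\beta,\cdot)$ as a quadratic in $p$ with positive leading coefficient and negative constant term to obtain the unique positive root, and the paper uses precisely your implicit-differentiation formula $p_\sharp'(\beta)=-\dfrac{p_\sharp^2+dv_*p_\sharp}{\beta(2p_\sharp+dv_*)}<0$ for monotonicity, leaving the limits to ``direct calculation.'' Your additional explicit closed-form for $p_\sharp(\beta)$ is a harmless elaboration that makes the limits transparent.
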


\begin{proof}
The existence and uniqueness of $p_{\sharp}(\beta)$ is obvious as  $D(\beta,p)=0$ is quadric in $p$, $\beta>0$, $\beta dv_*>0$ and $-bcu_*v_*<0$. By taking derivative with respect to $\beta$ in  $D(\beta,p_{\sharp}(\beta))=0$, we obtain
\begin{equation*}
p_{\sharp}'(\beta)=-\frac{p_{\sharp}^2+dv_*p_{\sharp}}{\beta(2p_{\sharp}+dv_*)}<0.
\end{equation*}
Therefore $p_{\sharp}$ is strictly decreasing with respect to $\beta$ and the limits can be obtained by a direct calculation.
\end{proof}

Now we have the main result on the stability/instability of $(u_*,v_*)$ and bifurcation of non-constant solutions for system \eqref{equ-coopnonlocal}.

\begin{theorem}\label{theorem-coop}
For system \eqref{equ-coopnonlocal} with fixed parameters $a,~b,~c,~d,~l>0$ satisfying $ad-bc>0$, we have the following results about the stability and bifurcation of constant equilibrium $(u_*,v_*)$:
\begin{description}
\item (i) There exists a decreasing  sequence $\beta_{j}=\dfrac{bcu_*v_*}{\la_j(\la_j+dv_*)}$ with $\la_j=j^2/l^2$ such that system \eqref{equ-coopnonlocal} undergoes a steady state bifurcation at $\beta=\beta_j$ near $(u_*,v_*)$;
\item (ii)  $(u_*,v_*)$ is locally asymptotically stable for $\beta\in(\beta_1,\infty)$ and unstable for $\beta\in(0,\beta_1)$;
\item (iii) there exists a positive constant $\delta>0$ such that the set of non-constant steady state solutions $\Gamma_1$ of \eqref{equ-coopnonlocal} near $(\beta_1,u_*,v_*)$ has the form:
\begin{equation}\label{equ-Gamman}
\Gamma_1=\left\{\left(\beta_1(s), U(s,x), V(s,x)\right):\ -\delta<s<\delta\right\},
\end{equation}
where $U(s,x)=u_*+s\varphi_1(x)+sg_{1}(s,x), ~V(s,x)=v_*+sh\varphi_1(x)+sg_{2}(s,x)$,
and $\beta(s),~g_{i}(s,\cdot) (i=1,2)$ are smooth functions defined for $s\in (0,\delta)$ such that $\beta(0)=\beta_1$, and $g_{i}(0,\cdot)=0\ (i=1,2)$ and $\ds h=\frac{cv_*}{\la_1+dv_*}$;

\item (iv) $\beta_1'(0)=0,~\beta_1''(0)\not=0$, thus the bifurcation is of pitchfork type. Moreover, if $\beta_1''(0)>0$, the bifurcation is supercritical and the bifurcating steady states are unstable for $s\in(-\delta, \delta)$; and if $\beta_1''(0)<0$, the bifurcation is subcritical and the bifurcating steady states are locally asymptotically stable for $s\in(-\delta, \delta)$.
\end{description}
\end{theorem}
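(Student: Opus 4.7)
The plan is to mirror the structure of Theorem \ref{theorem-scalarbifurcation} but in the two-species, vector-valued setting, applying the Crandall--Rabinowitz local bifurcation theorem at each critical value $\beta_j$ and then using the reduction formulas of \cite{Crandall1973,Shi1999} to extract the bifurcation direction and the stability.

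For parts (i) and (ii), I would first solve $D_j(\beta)=0$ directly: the equation $\beta\la_j(\la_j+dv_*)=bcu_*v_*$ gives the stated formula for $\beta_j$, and its monotonicity in $j$ follows immediately from $\la_j=j^2/l^2$. Since $T_j(\beta)>0$ for every $\beta>0$ and $j\in\N_0$, and since $J_U+J_{\bar{U}}$ is stable (because $ad>bc$ yields $Det(J_U+J_{\bar{U}})=(ad-bc)u_*v_*>0$ together with negative trace), the sign of $D_j(\beta)$ is the only obstruction to stability. Lemma \ref{lemma-coop} then delivers local asymptotic stability for $\beta>\beta_1$ and instability for $\beta<\beta_1$.

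For part (iii), define $F:\R^+\times X^2\to Y^2$ (with the $X,Y$ of the scalar case) by
\begin{equation*}
F(\beta,u,v)=\begin{pmatrix}\beta u_{xx}+u(1-a\bar u+bv)\\ v_{xx}+v(1+cu-dv)\end{pmatrix}.
\end{equation*}
Then $F(\beta,u_*,v_*)=0$ for all $\beta>0$. Projecting $L:=F_{(u,v)}(\beta_1,u_*,v_*)$ onto the Fourier modes $\varphi_j=\cos(jx/l)$, the mode $j\ge 1$ is governed by $J_j=-\la_j D+J_U$ because the nonlocal average annihilates $\varphi_j$ for $j\ge 1$, whereas the zero mode is governed by $J_U+J_{\bar{U}}$. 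Because $\beta_1$ is chosen so that only $Det(J_1)=0$ while none of the other $J_j$ nor the zero-mode matrix is singular, $\mathcal{N}(L)$ is one-dimensional and spanned by $(\varphi_1,h\varphi_1)$ with $h=cv_*/(\la_1+dv_*)$ read off from $J_1(1,h)^T=0$. The range $\mathcal{R}(L)$ is of codimension one by the Fredholm alternative, characterized by pairing with the left null-vector of $J_1$ in the $\varphi_1$-component. The transversality condition amounts to $F_{\beta(u,v)}(\beta_1,u_*,v_*)[(\varphi_1,h\varphi_1)]=(-\la_1\varphi_1,0)\notin\mathcal{R}(L)$ and is a direct inner-product computation, after which Crandall--Rabinowitz yields the branch $\Gamma_1$ in the claimed form.

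For part (iv), the formula for $\beta_1'(0)$ from \cite{Shi1999} reduces to a multiple of $\int_0^{l\pi}\cos^3(x/l)\,dx=0$, which forces $\beta_1'(0)=0$ and makes the bifurcation of pitchfork type. Then $\beta_1''(0)$ is given by the next-order formula involving an auxiliary $\eta\in X^2$ solving $L[\eta]+F_{(u,v)(u,v)}(\beta_1,u_*,v_*)[(\varphi_1,h\varphi_1),(\varphi_1,h\varphi_1)]=0$ together with the zero-mean condition $\ds\int_\Om\eta\,dx=0$. Finally, the perturbation argument of Corollary~1.13 and Theorem~1.16 in \cite{Crandall1973} (equivalently Theorem~5.4 in \cite{LiuShi2018}) links $\mathrm{Sign}(\mu(s))$ with $\mathrm{Sign}(-s\beta_1'(s)M'(\beta_1))$ exactly as in \eqref{formula}, from which $\beta_1''(0)<0$ forces a stable supercritical pitchfork and $\beta_1''(0)>0$ forces an unstable subcritical one. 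The main obstacle is the vector-valued auxiliary problem for $\eta$: its two components are coupled through the $j=0$ and $j=2$ Fourier modes (after expanding $\cos^2(x/l)=\tfrac12+\tfrac12\cos(2x/l)$), and isolating the mean $\bar\eta$ requires integrating the system and inverting $J_U+J_{\bar{U}}$; care is also needed with the adjoint because the non-symmetry of $J_U$ makes the left null-vector differ from $(1,h)^T$ and thus changes the orthogonality condition used both in transversality and in the formula for $\beta_1''(0)$.
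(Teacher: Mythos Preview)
Your proposal is correct and follows essentially the same route as the paper: both reduce (i)--(ii) to solving $D_j(\beta)=0$ and invoking Lemma~\ref{lemma-coop}, and both prove (iii)--(iv) by applying Crandall--Rabinowitz to the map $F$ (the paper calls it $G$), computing $\mathcal{N}(L)$, $\mathcal{R}(L)$, and transversality exactly as you sketch, and then reading off $\beta_1'(0)=0$ from $\int_0^{l\pi}\cos^3(x/l)\,dx=0$ and $\beta_1''(0)$ from the quadratic correction, with stability via \cite{Crandall1973}. One small slip: the auxiliary $\eta$ is \emph{not} normalized by $\int_\Om\eta\,dx=0$ but by membership in a complement of $\mathcal{N}(L)$ (e.g.\ orthogonality to $(1,h)\varphi_1$); its mean is generically nonzero---indeed the paper explicitly computes the nonzero mode-$0$ components $\Theta_0^1,\Theta_0^2$ by inverting $J_U+J_{\bar U}$, which is what you yourself describe in the final paragraph, so your later remark is right and the earlier ``zero-mean condition'' should be dropped.
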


\begin{proof}
For part (i), the steady state bifurcation occurs at $\beta=\beta_j$ if there exist some $j\in\mathbb{N}$ such that $p_{\sharp}(\beta_j)=\la_j$ which is equivalent to $D_j(\beta_j)=0$. By Lemma \ref{lemma-coop}, $p_{\sharp}$ is strictly decreasing in $\beta>0$, thus we obtain a decreasing sequence $\beta_j$ such that system \eqref{equ-coopnonlocal} undergoes a steady state bifurcation at $\beta_j$. Part (ii) is a corollary of (i) since the equilibrium $(u_*,v_*)$ loses its stability at the first bifurcation value.

For part (iii) and (iv), we again use the abstract bifurcation theory in \cite{Rabinowitz1971,Shi1999}, which is similar to the  proof of Theorem \ref{theorem-scalarbifurcation}. The steady states of \eqref{equ-coopnonlocal} satisfy the following nonlocal elliptic system:
\begin{equation}\label{equ-coopsteady}
\begin{cases}
\beta u_{xx}+u\left(1-\dfrac{a}{l\pi}\ds\int_0^{l\pi}u(x)dx+bv\right)=0, & x\in(0,l\pi),\\
v_{xx}+v\left(1+cu-dv\right)=0, & x\in(0,l\pi),\\
u_{x}(0)=u_{x}(l\pi)=0,~v_{x}(0)=v_{x}(l\pi)=0.
\end{cases}
\end{equation}

We define a nonlinear mapping $G:\ \mathbb{R}^+\times X^2\rightarrow Y^2$ by
\begin{equation}\label{equ-G}
 G(\beta,u,v)=\left(\begin{array}{cccc}
\medskip
&\beta u_{xx}+u\left(1-\dfrac{a}{l\pi}\ds\int_{0}^{l\pi}udx+bv\right)\\
\medskip
&v_{xx}+v(1+cu-dv)\\
\end{array}\right).
\end{equation}
It is clear that $G(\beta,u_*,v_*)=0$.
We have
\begin{equation}\label{equ-tildeL}
\begin{aligned}
 &G_{(u,v)}\left(\beta_1, u_*,v_*\right)[\varphi,\psi]
 =\left(\begin{array}{cc}
\medskip
 &\beta_1\varphi_{xx}-\dfrac{au_*}{l\pi}\ds\int_{0}^{l\pi}\varphi dx+bu_*\psi\\
\medskip
 &\psi_{xx}+cv_*\varphi-dv_*\psi\\
\end{array}\right):=\tilde{L}.\end{aligned}
\end{equation}
Then the kernel is $\mathcal{N}(\tilde{L})=\textrm{Span}\left\{\tilde{q}=(1, h)\varphi_1 \right\}$ where $h=\ds\frac{cv_*}{\la_1+dv_*}$, thus $\dim\left({\mathcal{N}(\tilde{L})}\right)=1$. The range space of $\tilde{L}$ is $\mathcal{R}(\tilde{L})=\left\{(f_1,f_2)\in Y^2:\ \lan y,(f_1,f_2)\rangle=0\right\}$, where $y$ is defined by
\begin{equation*}
\lan y,(f_1,f_2)\rangle=\ds\int_0^{l\pi}\left(f_1+\frac{bu_*}{\la_1+dv_*}f_2\right)\varphi_1dx,
\end{equation*}
and thus $\textrm{codim}\left(\mathcal{R}(\tilde{L})\right)=1$. Also, from \eqref{equ-G}, we have
\begin{equation}\label{equ-Gderibeta}
 G_{\beta(u,v)}\left(\beta_1, u_*,v_*\right)[\tilde{q}]=(1,h)^T\Delta\varphi_1=-(1,h)^T\la_1\varphi_1\not\in\mathcal{R}(\tilde{L}),
\end{equation}
as $\ds\int_{\Omega}\left(1+\dfrac{\beta_1\la_1}{\la_1+dv_*}\right)\varphi^2_idx>0$. By applying Theorem 1.7 in \cite{Rabinowitz1971}, we obtain the result in part (iii).

Then we calculate $\beta'_1(0)$. From \cite{Jin2013}, we know that $\beta'_1(0)$ has the following form:
\begin{equation}\label{equ-beta'}
\aligned
\beta'_1(0)=&-\frac{\left\langle y,\ G_{(u,v)(u,v)}\left(\beta_1, u_*, v_*\right)[\tilde{q},\tilde{q}]\right\rangle}
 {2\left\langle y,\ G_{\beta(u,v)}\left(\beta_1, u_*,v_*\right)[\tilde{q}]\right\rangle}.
\endaligned\end{equation}
Also, from \eqref{equ-Gderibeta}, we have
\begin{equation*}
\left\langle y,\ G_{\beta(u,v)}\left(\beta_1, u_*,v_*\right)[\tilde{q}]\right\rangle=-\left(1+\dfrac{\beta_1\la_1}{\la_1+dv_*}\right)\int_{0}^{l\pi}\varphi_1^2dx=-\frac{l\pi}{2}\left(1+\dfrac{\beta_1\la_1}{\la_1+dv_*}\right),
\end{equation*}
and
\begin{equation}\label{equ-Guu}
\left\langle y,\ G_{(u,v)(u,v)}\left(\beta_1, u_*,v_*\right)[\tilde{q},\tilde{q}]\right\rangle\\
=\left[2bh\beta_{1}^{-1}+2h(c-dh)\frac{\beta_1\la_1}{h(\la_1+dv_*)}\right]\int_{0}^{l\pi}\varphi_1^3dx=0,
\end{equation}
since $\ds\int_{0}^{l\pi}\varphi_1^3dx=\int_{0}^{l\pi}\cos^3\left(\frac{x}{l}\right)dx=0$. Therefore, Theorem 1.7 in \cite{Rabinowitz1971} can be applied to obtain the existence of the branch of non-constant solutions $\Gamma_1$ as in \eqref{equ-Gamman}, and $\beta_1'(0)=0$.

Continuing to calculate $\beta_1''(0)$, which reads \cite{Jin2013},
\begin{equation}\label{equ-beta''}
\beta_1''(0)=-\dfrac{\left\langle y,\ G_{(u,v)(u,v)(u,v)}\left(\beta_1, u_*,v_*\right)[\tilde{q}, \tilde{q}, \tilde{q}]\right\rangle+3\left\langle y,\ G_{(u,v)(u,v)}\left(\beta_1, u_*,v_*\right)[\tilde{q},\Theta]\right\rangle}{3\left\langle y,\ G_{\beta(u,v)}\left(\beta_1, u_*,v_*\right)[\tilde{q}]\right\rangle}.
\end{equation}
First we have
$\left\langle y,\ G_{(u,v)(u,v)(u,v)}\left(\beta_1, u_*,v_*\right)[\tilde{q}, \tilde{q}, \tilde{q}]\right\rangle=0$
since all the third derivatives are zero.
Then,
\begin{equation*}
\left\langle y,\ G_{(u,v)(u,v)}\left(\beta_1, u_*,v_*\right)[\tilde{q},\Theta]\right\rangle=\frac{l\pi}{2}A+\frac{3l\pi}{8}B,
\end{equation*}
where
\begin{equation}\label{equ-AnBn}
\begin{aligned}
A&=\left[-a\Theta^1_0+bh(\Theta^1_0-\Theta^1_{2})+b(\Theta^2_0-\Theta^2_{2})\right]\\
&+\frac{\beta_1\la_{1}}{h(\la_1+dv_*)}\left[ch(\Theta^1_0-\Theta^1_{2})+(c-2dh)(\Theta^2_0-\Theta^2_{2})\right],\\
B&=\left[2bh\Theta^1_{2}+2b\Theta^2_{2}\right]+\frac{\beta_1\la_{1}}{h(\la_1+dv_*)}\left[2ch\Theta^1_{2}+2(c-2dh)\Theta^2_{2}\right],
\end{aligned}
\end{equation}
with
\begin{equation}\label{equ-Thetacoefficient}
\begin{aligned}
\Theta^{1}_0&=\frac{bh(-dhu_*+c_*+dv_*)}{(ad-bc)u_*v_*}, \;\; \Theta^{1}_{2}=-\frac{bh(-dhu_*+cu_*+dv_*+4\la_1)}{bcu_*v_*-4d\beta_1\la_1v_*-16\beta_1\la_1^2},\\
\Theta^{2}_0&=\frac{au_*(c-dh)+bcv_*}{(ad-bc)u_*v_*},\;\;
\Theta^{2}_{2}=-\frac{h(bcv_*+4c\beta_1\la_1-4dh\beta_1\la_1)}{bcu_*v_*-4d\beta_1\la_1v_*-16\beta_1\la_1^2}.
\end{aligned}
\end{equation}
Thus we have
\begin{equation}\label{equ-betaformula}
\beta_1''(0)=\frac{1}{\beta_1\la_1}\left(A+\frac{3}{4}B\right)=\frac{P}{Q},
\end{equation}
with
\begin{equation*}
\begin{aligned}
P=&6bcd^4v_*^5-4(ad-bc)d^4v_*^5-9ac^2d^2\la_1u_*^2v_*^2+10acd^3\la_1u_*v_*^3-37ad^4\la_1v_*^4\\
&+9bc^3d\la_1u_*^2v_*^2+2bc^2d^2\la_1u_*v_*^3+79bcd^3\la_1v_*^4-19ac^2d\la_1^2u_*^2v_*+20acd^2\la_1^2u_*v_*^2\\
&-87ad^3\la_1^2v_*^3+25bc^3\la_1^2u_*^2v_*+52bc^2d\la_1^2u_*v_*^2+153bcd^2\la_1^2v_*^3+30ac^2\la_1^3u_*^2\\
&+10acd\la^3u_*v_*-79ad^2\la_1^3v_*^2+50bc^2\la_1^3u_*v_*+109bcd\la_1^3v_*^2-25(ad-bc)\la_1^4v_*,\\
Q=&6(d^3v_*^3+7d^2\la_1v_*^2+11d\la^2v_*+5\la^3)(dv_*(ad-bc)+(ad-bc)\la_1)v_*u_*^2.
\end{aligned}
\end{equation*}
The assertion on the stability follows from the same way as the proof of Theorem \ref{theorem-scalarbifurcation}.
\end{proof}

\begin{remark}
In \cite{Jin2013}, a detailed bifurcation analysis for steady state bifurcation  is carried out in a regular reaction-diffusion system. Here, we give a calculation for bifurcation direction when spatial average is introduced into a reaction-diffusion system. The main difference lies in the calculation for the derivatives of nonlinear operator $G$. For instance, we see the first Fr\'echet derivative $G_{(u,v)}\left(\beta_1, u_*,v_*\right)[q]$ in \eqref{equ-Gderibeta}, because the integral $\ds\int_{0}^{l\pi}\varphi dx=0$, so the parameter $a$ does not play role in determination of bifurcation direction (the similar for the second derivative \eqref{equ-Guu}). However, if we replace the nonlocal term with a local one, parameter $a$ will certainly affects the direction of steady state bifurcation.
\end{remark}

We do not have a more definite conclusion on the sign of  $\beta_1''(0)$ in \eqref{equ-betaformula} due to the complex form of $P$ and $Q$, but for a given set of parameters $a,~b,~c,~d,~l$, it can be calculated. For example, when the parameters in \eqref{equ-coopnonlocal} are
\begin{equation}\label{equ-coopparameter}
a=1,~b=0.1,~c=0.1,~d=1,~l=1,
\end{equation}
we can compute that $\beta''_1(0)=-1.6759<0$ from \eqref{equ-betaformula}, thus the pitchfork bifurcation is subcritical and the bifurcating non-constant steady states are locally asymptotically stable near $\beta=\beta_1$. Here, we plot the graph of $D(\beta,p)=0$ in $(\beta,p)$ plane (see Fig. \ref{Fig-coopdiagram}), and the steady state bifurcation points are
\begin{equation*}
\beta_1=0.00585,~\beta_2=0.000605.
\end{equation*}

\begin{figure}[htp]
\centering
\includegraphics[scale=0.4]{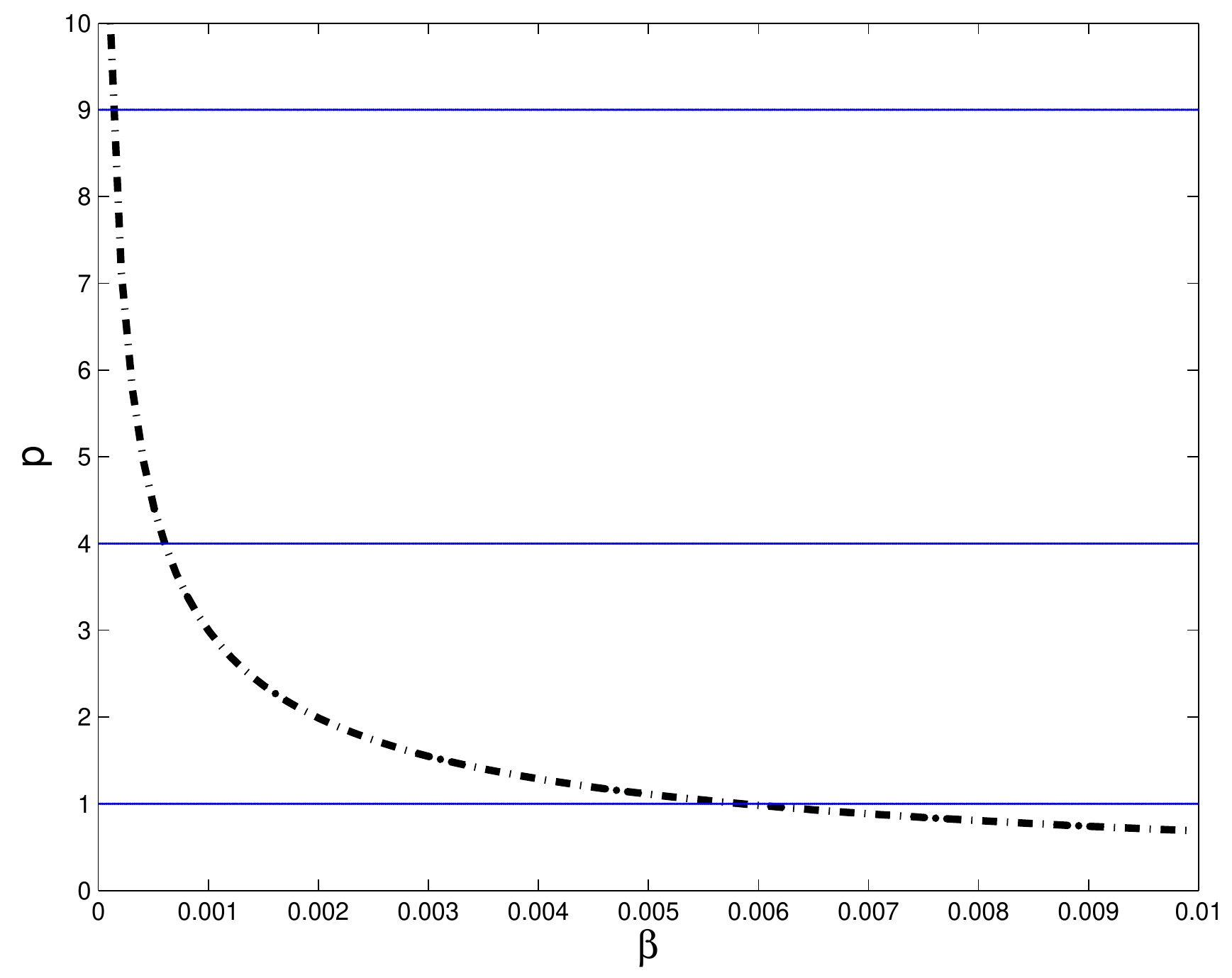}
 \caption{The plot of $D(\beta,p)=0$ (black dash-doted curve) with the parameters from \eqref{equ-coopparameter}, and the blue solid horizontal lines are $p=j^2/l^2$ with $j\in\mathbb{N}$.
\label{Fig-coopdiagram}}
\end{figure}

\begin{figure}[htp]
\centering
\subfloat[species u]{\includegraphics[scale=0.45]{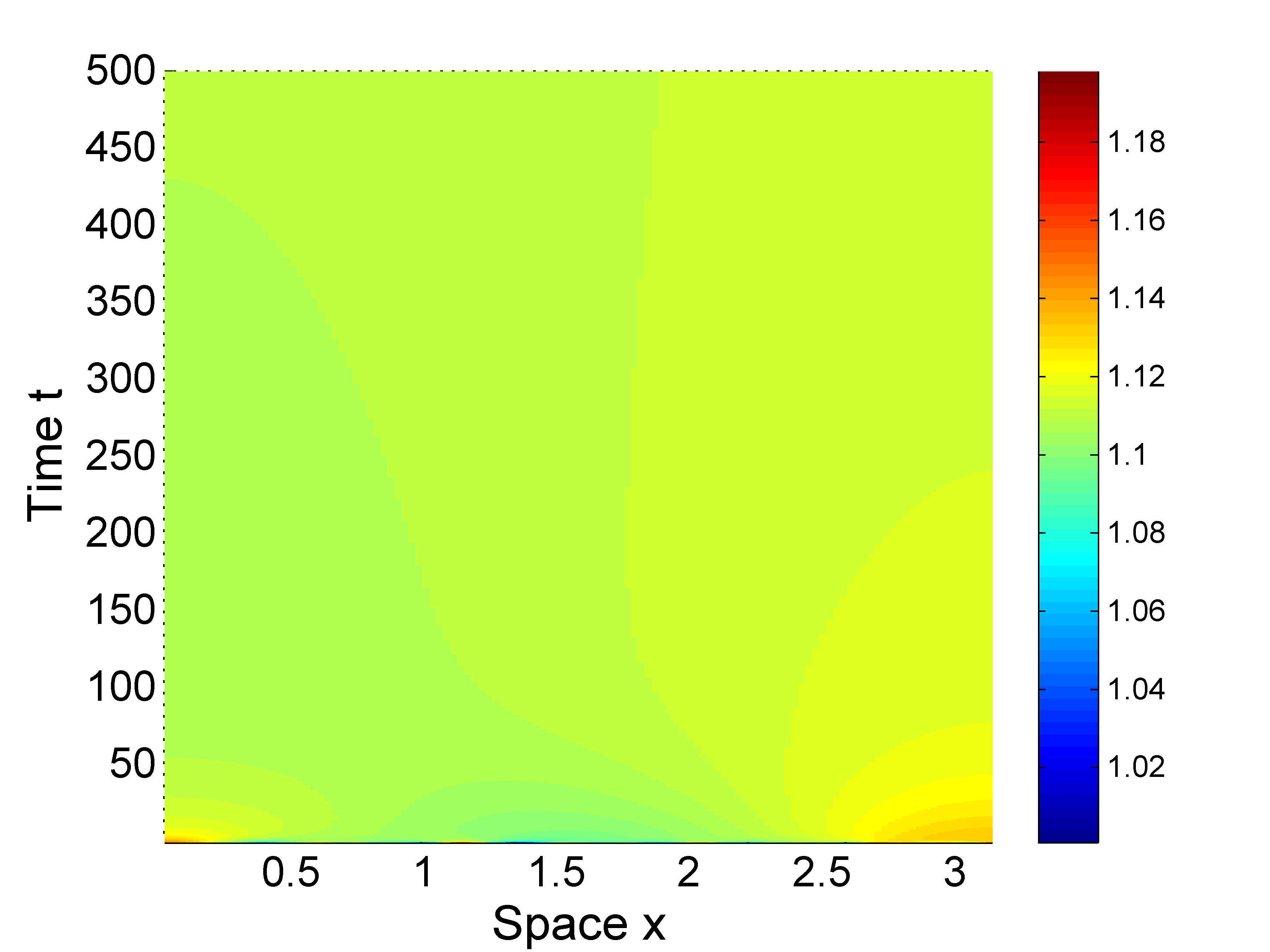}}
\subfloat[species v]{\includegraphics[scale=0.45]{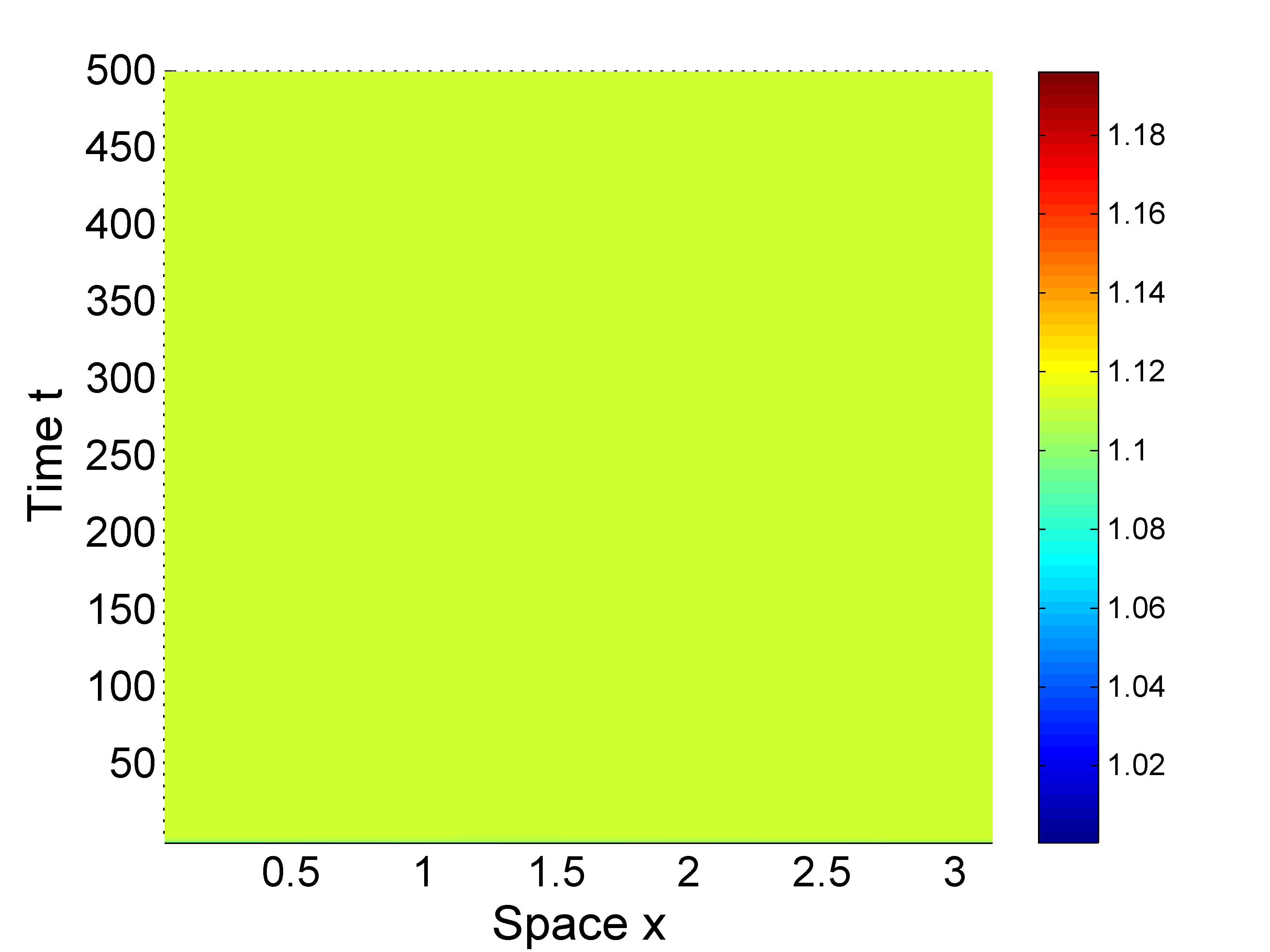}}\quad
\subfloat[species u]{\includegraphics[scale=0.45]{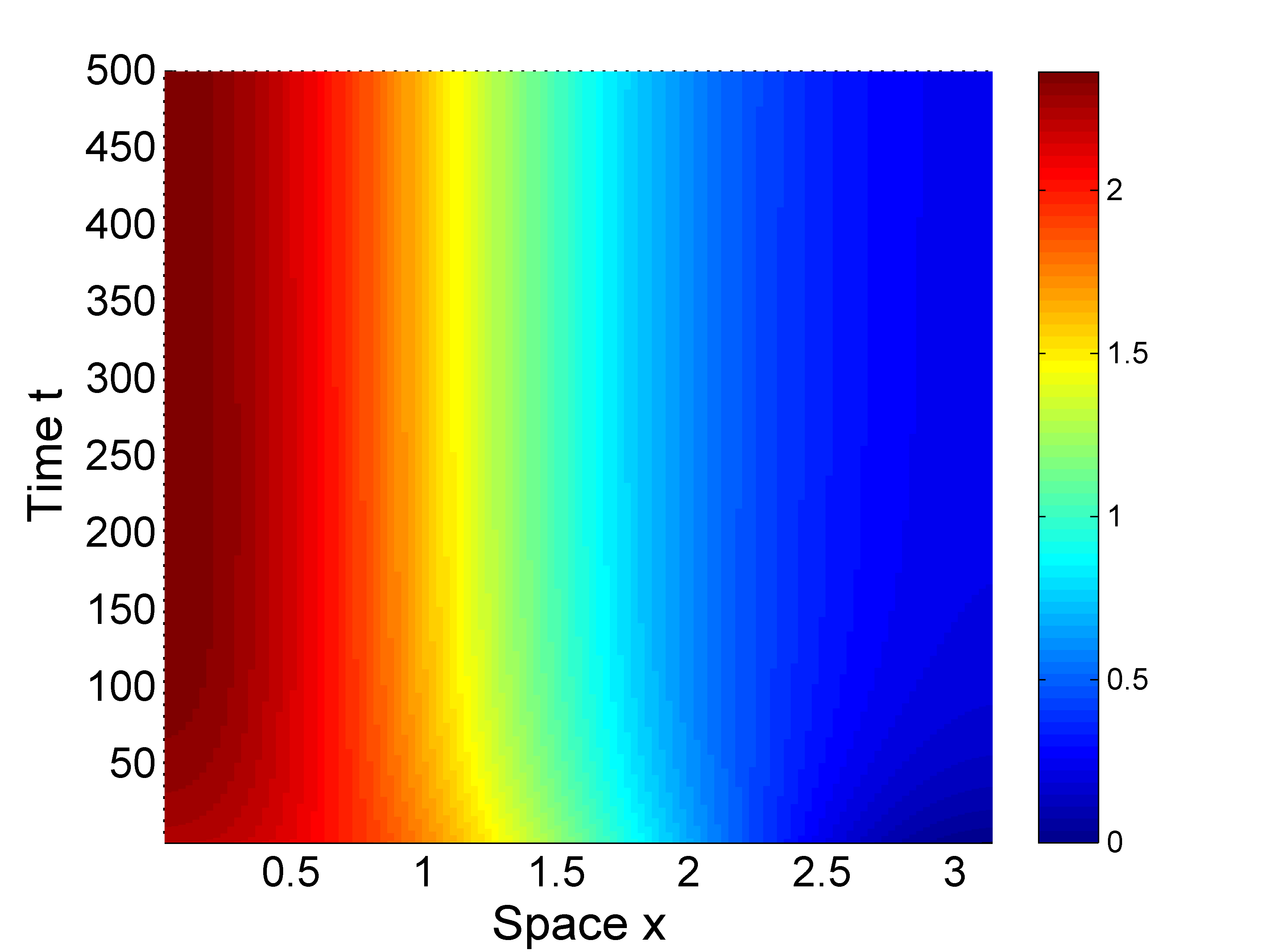}}
\subfloat[species v]{\includegraphics[scale=0.45]{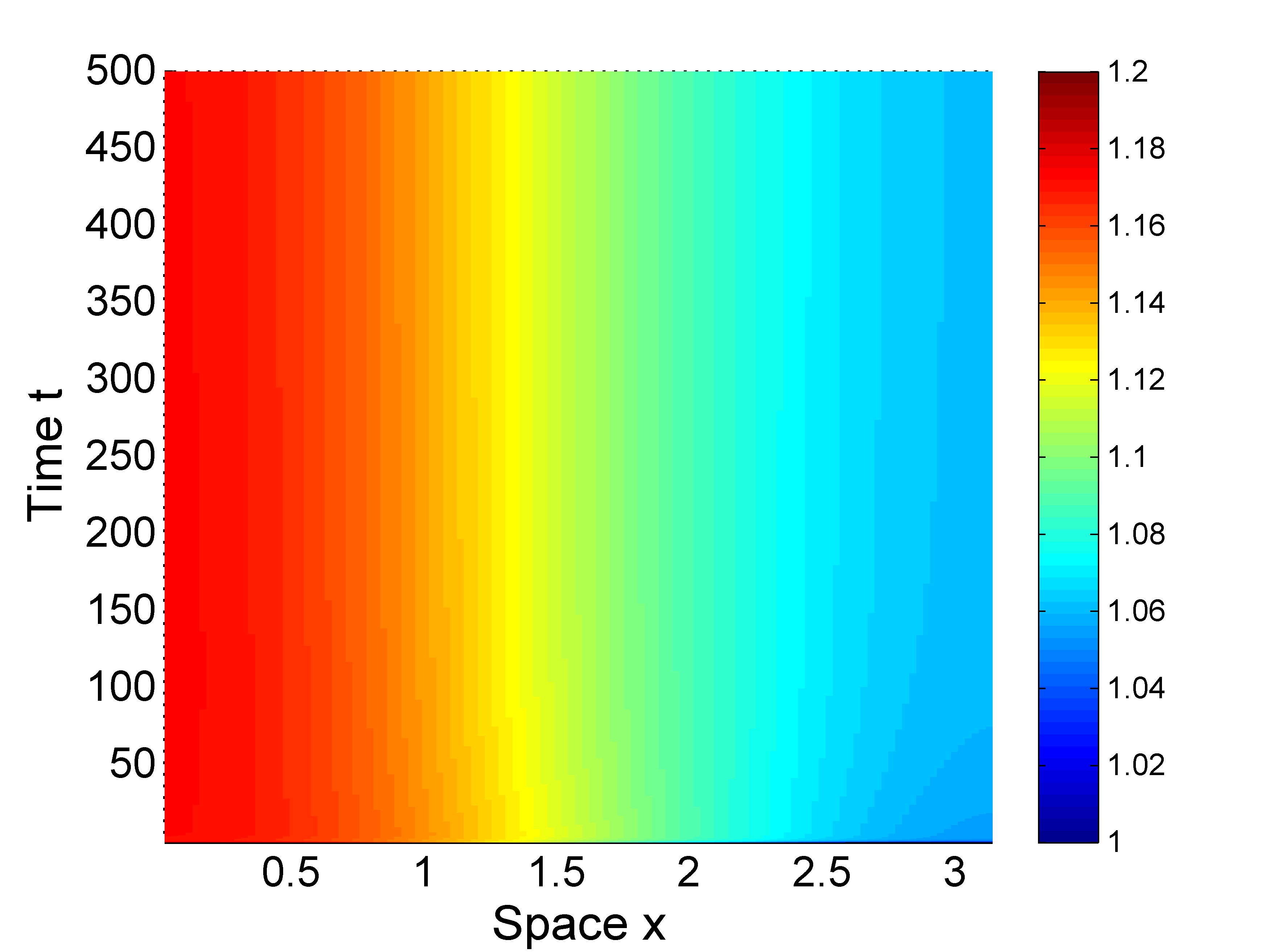}}\quad
\subfloat[species u]{\includegraphics[scale=0.45]{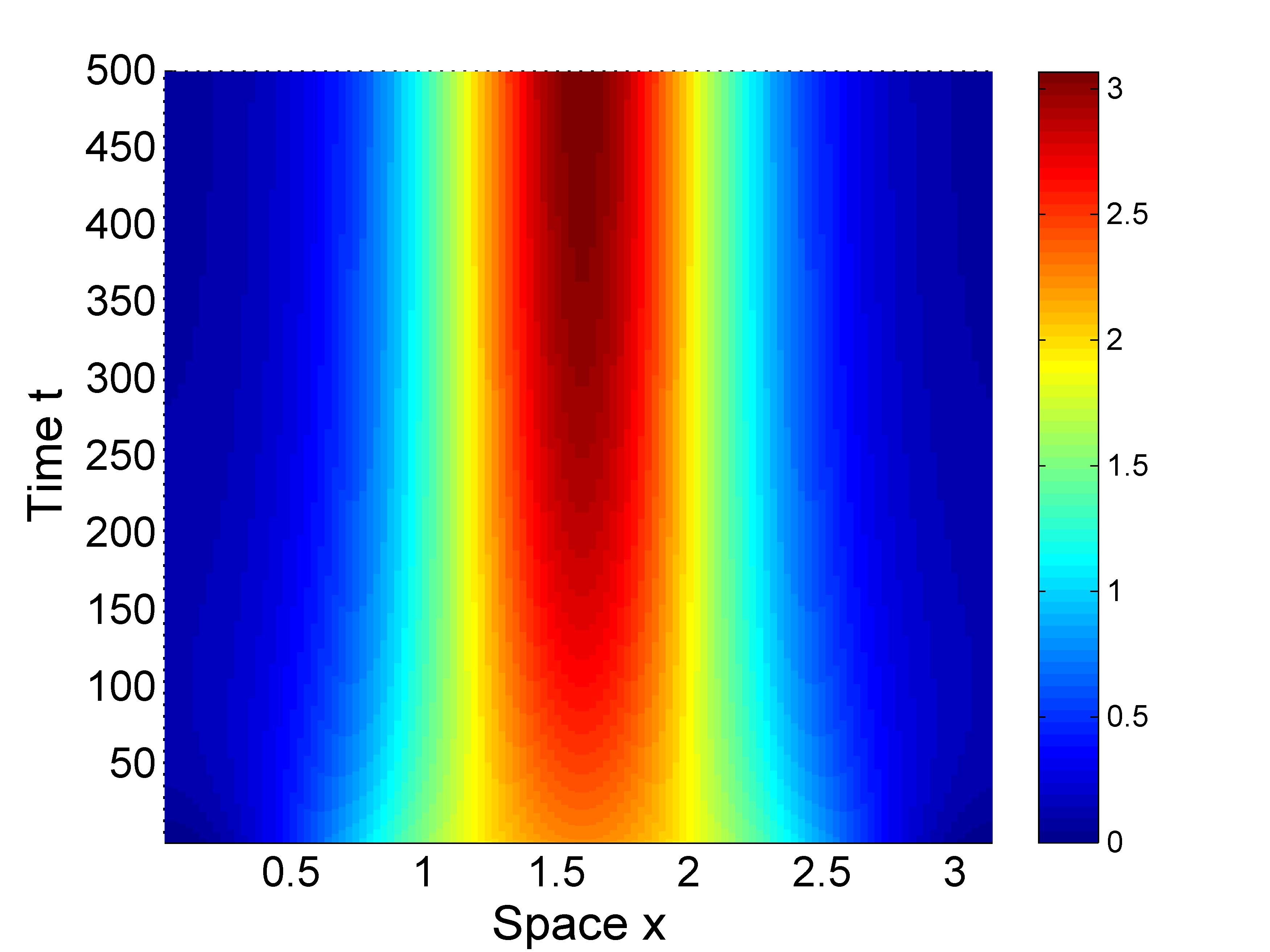}}
\subfloat[species v]{\includegraphics[scale=0.45]{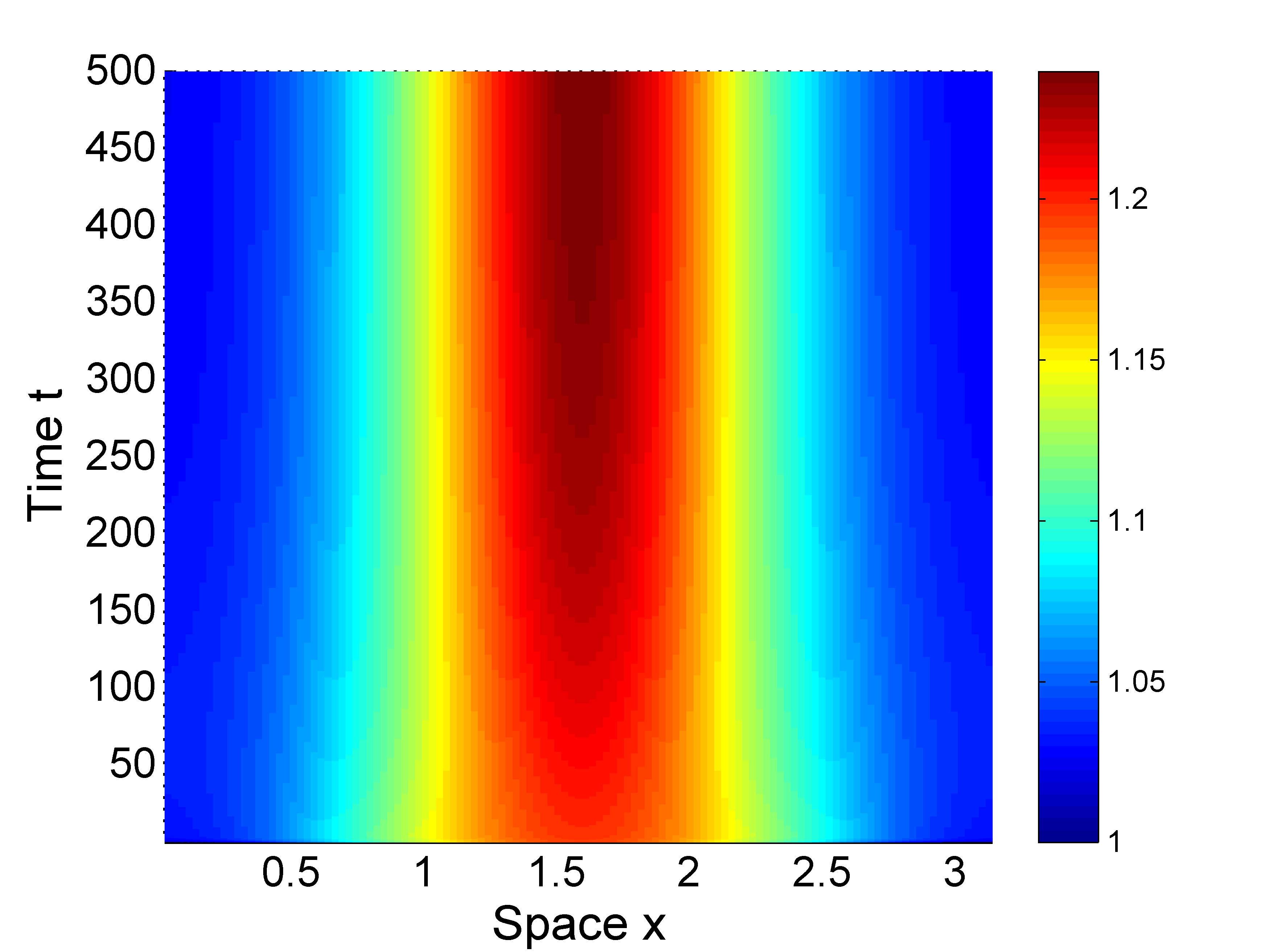}}
  \caption{The dynamics of Eq. \eqref{equ-coopnonlocal} with parameters are in \eqref{equ-coopparameter}. (Top row): $\beta=0.008$, the constant steady state $(u_*,v_*)=(0.085,1.008)$ is locally asymptotically stable; (Middle row): $\beta=0.004<\beta_1$, a mode-1 Turing pattern can be observed; (Bottom row): $\beta=0.0005<\beta_2$, a mode-2 Turing pattern can be observed. \label{Fig-coop}}
\end{figure}

Guided by the above stability and bifurcation analysis, we choose three different $\beta$ values for numerical simulations: $\beta=0.008,~\beta=0.004,~\beta=0.0005$ to observe the dynamical behavior of Eq. \eqref{equ-coopnonlocal}. When $\beta=0.008>\beta_1$, according to Theorem \ref{theorem-coop}, we know that $(u_*,v_*)$ is still locally stable. In Fig. \ref{Fig-coop} (top row),  we see that the solution of Eq. \eqref{equ-coopnonlocal} converges to the stable equilibrium $(u_*,v_*)=(0.085,1.008)$.
Then we decrease $\beta$ such that $\beta<\beta_1$. First, when $\beta=0.004$ satisfying $\beta_2<0.004<\beta_1$, a  mode-1 Turing pattern is observed in Fig. \ref{Fig-coop} (middle row). Next we take $\beta=0.0005<\beta_2$, then we observe a mode-2 Turing patterns in Fig. \ref{Fig-coop} (bottom row). Our theoretical result in Theorem \ref{theorem-coop} confirms the observations at $\beta=0.008$ and $\beta=0.004$, but the mode-2 Turing pattern observed at $\beta=0.0005$ probably is due to a secondary bifurcation not primary one from the constant steady state.

As we mentioned in the beginning of this section, the model \eqref{equ-coopnonlocal} is an example of case $(i)$ in Theorem \ref{theorem-instability}. If a nonlocal competition also exists in the system, then  the following system
\begin{equation}\label{equ-coop2nonlocal}
\begin{cases}
u_{t}=\beta u_{xx}+u\left(1-\dfrac{a}{l\pi}\ds\int_0^{l\pi}u(x,t)dx+bv\right), & x\in(0,l\pi),\ t>0,\\
v_{t}=v_{xx}+v\left(1+cu+dv-\dfrac{e}{l\pi}\ds\int_0^{l\pi}v(x,t)dx\right), & x\in(0,l\pi),\ t>0,\\
u_{x}(0,t)=u_{x}(l\pi,t)=0,~v_{x}(0,t)=v_{x}(l\pi,t)=0, & t>0,\\
u(x,0)=u_0(x)\ge0,~~v(x,0)=v_0(x)\ge0, & x\in[0,l\pi],
\end{cases}
\end{equation}
provides an example of case $(iii)$ in Theorem \ref{theorem-instability}. Here $e$ satisfying $e>d$ is the nonlocal competition parameter in species $v$ and $d$ is the growth of $v$, and other parameters have the same meaning as that in \eqref{equ-coopnonlocal}. It is clear that system \eqref{equ-coop2nonlocal} has a unique positive constant equilibrium $(u_*,v_*)$:
\begin{equation}\label{equ-coopequilibrium}
u_*=\dfrac{e-d+b}{a(e-d)-bc},~v_*=\dfrac{a+c}{a(e-d)d-bc},
\end{equation}
when $a(e-d)-bc>0$ is satisfied. And the linearization at $(u_*,v_*)$ gives
\begin{equation}\label{matrix3}
D=\begin{pmatrix}\beta & 0\\ 0 & 1\end{pmatrix},~J_{U}+J_{\bar{U}}=\begin{pmatrix}-au_* & bu_*\\ cv_* & -(e-d)v_*\end{pmatrix},~J_{U}=\begin{pmatrix}0 & bu_*\\ cv_* & dv_*\end{pmatrix}.
\end{equation}
Then $J_{U}+J_{\bar{U}}$ is stable as $a(e-d)-bc>0$, and $J_U$ satisfies $Tr(J_U)>0$ and $Det(J_U)<0$ so this example belongs to the case $(iii)$ in Theorem \ref{theorem-instability}. Through a tedious calculation, we find that $p_*<p_+$ always holds for this model, so this is an example of case $(iii-a)$ and Turing patterns can be generated similar to \eqref{equ-coopnonlocal}.

\section{A diffusive predator-prey model with nonlocal competition}

In this section, we consider  following reaction-diffusion  predator-prey (consumer-resource) model with nonlocal prey competition:
\begin{equation}\label{equ-prenonlocal}
\begin{cases}
\frac{}{}u_{t}=d_1u_{xx}+u\left(1-\dfrac{1}{kl\pi}\ds\int_0^{l\pi}u(x,t)dx\right)-\dfrac{muv}{u+1}, & x\in(0,l\pi),\ t>0,\\
v_{t}=d_2v_{xx}-\theta v+\dfrac{muv}{u+1}, & x\in(0,l\pi),\ t>0,\\
u_{x}(0,t)=u_{x}(l\pi,t),~v_{x}(0,t)=v_{x}(l\pi,t), & t>0,\\
u(x,0)=u_0(x)\ge0,~~v(x,0)=v_0(x)\ge0, & x\in[0,l\pi],
\end{cases}
\end{equation}
where $u(x,t)$, $v(x,t)$ stand for the prey and predator population densities respectively, the spatial domain is assumed to be one-dimensional interval $(0,l\pi)$, $k>0$ is carrying capacity, $m>0$ is the predation parameter and $\theta>0$ is the mortality rate of predator. The intraspecific competition of the prey is assumed to be nonlocal. The model \eqref{equ-prenonlocal} was first proposed in \cite{Merchant2011,Merchant2015} for wave propagation an unbounded domain. In \cite{ChenYu2017}, the existence of the nonlocality-induced stable spatially non-homogeneous periodic orbits was proved via Hopf bifurcation theory (see Theorems 3.2, 3.4, 3.5 in \cite{ChenYu2017}). In \cite{WuSong2019}, the same model was investigated for the Turing-Hopf bifurcation. Here we revisit this nonlocal model \eqref{equ-prenonlocal} and show that spatially non-homogeneous steady state can be induced by the nonlocal competition through bifurcation.

For the corresponding model with local competition
\begin{equation}\label{equ-prelocal}
\begin{cases}
u_{t}=d_1u_{xx}+u\left(1-\dfrac{u}{k}\right)-\dfrac{muv}{u+1}, & x\in(0,l\pi),\ t>0,\\
v_{t}=d_2v_{xx}-\theta v+\dfrac{muv}{u+1}, & x\in(0,l\pi),\ t>0,\\
u_{x}(0,t)=u_{x}(l\pi,t),~v_{x}(0,t)=v_{x}(l\pi,t), & t>0,\\
u(x,0)=u_0(x)\ge0,~~v(x,0)=v_0(x)\ge0, & x\in[0,l\pi],
\end{cases}
\end{equation}
a thorough bifurcation analysis was carried out in \cite{YiWeiShi2009}:
The system \eqref{equ-prelocal} (or equivalently \eqref{equ-prenonlocal}) has there constant non-negative equilibrium: $(0,0)$, $(k,0)$ and $(\la,v_{\la})$ with
\begin{equation}\label{equ-la}
\la=\dfrac{\theta}{m-\theta},~~v_{\la}=\dfrac{(k-\la)(1+\la)}{km},~0<\la<k.
\end{equation}
In the following we assume that $0<k\le1$ and $0<\la<k$ which ensures that the positive equilibrium $(\la,v_{\la})$ is globally asymptotically stable for the local system \eqref{equ-prelocal}(see \cite[Theorem 2.3]{YiWeiShi2009}). From the results in \cite{YiWeiShi2009}, it is also known that neither spatial steady state nor spatiotemporal patterns can appear in Eq. \eqref{equ-prelocal} under the assumptions that $0<k\le1$ and $0<\la<k$. Here we demonstrate that the spatial average  in system \eqref{equ-prenonlocal} can induce non-constant spatial patterns.

The linearization of Eq. \eqref{equ-prenonlocal} at $(\la,v_{\la})$ gives the following diffusion and Jacobian matrices
\begin{equation}\label{matrix4}
D=\begin{pmatrix}d_1 & 0\\ 0 & d_2\end{pmatrix},~J_{U}+J_{\bar{U}}=\begin{pmatrix}\dfrac{\la(k-1-2\la)}{k(1+\la)} & -\theta \vspace{0.2cm}\\ \dfrac{k-\la}{k(1+\la)} & 0\end{pmatrix},~J_{U}=\begin{pmatrix}\dfrac{\la(k-\la)}{k(1+\la)} & -\theta \vspace{0.2cm}\\ \dfrac{k-\la}{k(1+\la)} & 0\end{pmatrix}.
\end{equation}
Then, when $0<k\leq1$, $J_{U}+J_{\bar{U}}$ is stable as $0<\la<k$, and $J_U$ satisfies $Tr(J_U)>0$ and $Det(J_U)>0$ so this example belongs to the case $(ii)$ in Theorem \ref{theorem-instability}. Thus, the characteristic equation for the linearized system \eqref{equ-linear} is
\begin{equation}\label{equ-prechara}
\mu^{2}-T_{i}(\la)\mu+D_{i}(\la)=0,~i\in\mathbb{N}_{0},
\end{equation}
where
\begin{equation*}\label{equ-prechara0}
T_{0}(\la)=\dfrac{\la(k-1-2\la)}{k(1+\la)},~D_{0}(\la)=\frac{\theta(k-\la)}{k(1+\la)},
\end{equation*}
and for $i\ge1$,
\begin{equation*}\label{equ-precharan}
T_{i}(\la)=\dfrac{\la(k-\la)}{k(1+\la)}-\frac{(d_1+d_2)i^2}{l^2},~D_{i}(\la)=\frac{\theta(k-\la)}{k(1+\la)}-\frac{d_2\la(k-\la)}{k(1+\la)}\frac{i^2}{l^2}+\frac{d_1d_2i^4}{l^4}.
\end{equation*}
By letting $p=i^2/l^2$, we define the trace and determinant functions to be
\begin{equation}\label{Di}
T(\la,p)=C_1(\la)-(d_1+d_2)p,~D(\la,p)=d_1d_2p^{2}-d_2C_1(\la)p+\frac{\theta(k-\la)}{k(1+\la)},
\end{equation}
where
\begin{equation}\label{equ-C1}
C_1(\la):=\frac{\la(k-\la)}{k(1+\la)}.
\end{equation}

For the further discussion, we also define $C_2(\la):=\la C_1(\la)$. For the properties of functions $C_1(\la)$ and $C_2(\la)$, the following results are given in \cite[Lemma 3.1]{ChenYu2017}:
\begin{lemma}\label{lemma-preC1C2}
For $k>0$, the following statements are true:
\begin{description}
  \item (i) there exists $\la_*:=\sqrt{k+1}-1$ such that $C_1^{\prime}(\la_*)=0$ and $C_1^{\prime}(\la)>0$ for $\la\in(0,\la_*)$ and $C_1^{\prime}(\la)<0$ for $\la\in(\la_*,k)$ and $\ds\max_{\la\in[0,k]}C_1(\la)=C_1(\la_*)$;
  \item (ii) there exists $\la_{\sharp}:=\dfrac{k-3+\sqrt{(k-3)^2+16k}}{4}$ such that $C_2^{\prime}(\la_{\sharp})=0$ and $C_2^{\prime}(\la)>0$ for $\la\in(0,\la_{\sharp})$ and $C_2^{\prime}(\la)<0$ for $\la\in(\la_{\sharp},k)$ and $\max_{\la\in[0,k]}C_2(\la)=C_2(\la_{\sharp})$.
\end{description}
\end{lemma}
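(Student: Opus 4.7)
The plan is to prove both parts by direct differentiation, since $C_1$ and $C_2$ are explicit rational functions on $[0,k]$ that vanish at both endpoints, so each must attain its maximum at an interior critical point whose location and uniqueness will be read off from a quadratic equation.

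For part (i), I would apply the quotient rule to $C_1(\lambda)=\lambda(k-\lambda)/[k(1+\lambda)]$ and simplify. After cancellation the numerator should collapse to $k-2\lambda-\lambda^2$, divided by the manifestly positive factor $k(1+\lambda)^2$. The equation $\lambda^2+2\lambda-k=0$ has unique positive root $\lambda_*=\sqrt{k+1}-1$, matching the stated value. Because the polynomial $-\lambda^2-2\lambda+k$ equals $k>0$ at $\lambda=0$ and is strictly decreasing on $\lambda>-1$, the sign of $C_1'$ flips exactly once on $(0,\infty)$, giving $C_1'>0$ on $(0,\lambda_*)$ and $C_1'<0$ on $(\lambda_*,\infty)$. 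I still need $\lambda_*\in(0,k)$: positivity is clear since $\sqrt{k+1}>1$, and $\lambda_*<k$ is equivalent to $\sqrt{k+1}<k+1$, which holds for every $k>0$. Combined with $C_1(0)=C_1(k)=0$, the maximum of $C_1$ on $[0,k]$ is attained at $\lambda_*$.

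For part (ii), the same procedure applied to $C_2(\lambda)=\lambda^2(k-\lambda)/[k(1+\lambda)]$ should yield a numerator proportional to $\lambda\bigl[-2\lambda^2+(k-3)\lambda+2k\bigr]$. The factor $\lambda$ accounts for the boundary critical point at $0$; the nontrivial critical points satisfy $2\lambda^2-(k-3)\lambda-2k=0$, whose only positive root is $\lambda_\sharp=[(k-3)+\sqrt{(k-3)^2+16k}]/4$, again matching the statement. Since the quadratic $-2\lambda^2+(k-3)\lambda+2k$ equals $2k>0$ at $\lambda=0$ and is eventually negative, $C_2'>0$ on $(0,\lambda_\sharp)$ and $C_2'<0$ on $(\lambda_\sharp,\infty)$. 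To verify $\lambda_\sharp\in(0,k)$, positivity follows from $\sqrt{(k-3)^2+16k}>|k-3|$ (squaring reduces this to $16k>0$), and $\lambda_\sharp<k$ reduces after squaring to $8k(k+1)>0$, both true. With $C_2(0)=C_2(k)=0$, the maximum is attained at $\lambda_\sharp$.

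The only real obstacle is the bookkeeping in the algebraic simplification of the two derivative numerators, particularly for $C_2$ where several terms cancel; once the numerators are put in the factored forms above, everything reduces to inspecting a single quadratic on $(0,k)$. Since each function vanishes at the endpoints and has a unique interior critical point identified by the quadratic formula, the monotonicity and maximum assertions are forced.
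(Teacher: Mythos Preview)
Your argument is correct. The paper does not give its own proof of this lemma; it simply cites \cite[Lemma~3.1]{ChenYu2017}, so there is nothing to compare against beyond your direct computation, which is the natural route for such an elementary calculus fact. Your derivative numerators $k-2\la-\la^2$ and $\la\bigl[-2\la^2+(k-3)\la+2k\bigr]$ are correct, and your verifications that $\la_*,\la_\sharp\in(0,k)$ are sound.
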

Also the result on spatially non-homogeneous Hopf bifurcations are obtained.
\begin{proposition}\label{lemma-preyHopf}
(\cite[Theorem 3.2]{ChenYu2017}) Let $\la_{\sharp}$ and $\la_*$ be defined in Lemma \ref{lemma-preC1C2}, and suppose that $d_1,~d_2,~m,~\theta>0$ and $0<k\le1$ satisfy
\begin{equation}\label{5.10}
  \dfrac{d_1}{d_2}>\dfrac{C_2(\la_{\sharp})}{4\theta},
\end{equation}
 and define
\begin{equation}\label{equ-lnHopf}
l^{H}_i:=i\sqrt{\dfrac{d_1+d_2}{C_1(\la_*)}}, ~\textrm{with}~i\in\mathbb{N}.
\end{equation}
Then, the following two statements are true.
\begin{description}
\item (i) If $l\in(0,l^{H}_1)$, then $(\la,v_{\la})$ is locally asymptotically stable for $\la\in(0,k)$.
\item (ii) If $l\in(l^{H}_1, \infty)$, then there exist finitely many critical points satisfying
\begin{equation*}
0<\la^{H}_{1,-}(l)<\cdots<\la^{H}_{n,-}(l)<\la_*<\la^{H}_{n,+}<\cdots<\la^{H}_{1,+}(l)<k,
\end{equation*}
such that $(\la,v_{\la})$ is locally asymptotically stable for $\la\in\left(0,\la^{H}_{1,-}(l)\right)\cup\left(\la^{H}_{1,+}(l),\infty\right)$ and unstable for $\la\in\left(\la^{H}_{1,-}(l),\la^{H}_{1,+}(l)\right)$. Moreover, system \eqref{equ-prenonlocal} undergoes Hopf bifurcation at $\la=\la^{H}_{n,\pm}(l)$, and the bifurcating periodic solutions near $\la^{H}_{n,-}(l)$ or $\la^{H}_{n,+}(l)$ are spatially non-homogeneous.
\end{description}
\end{proposition}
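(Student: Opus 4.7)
The plan is to apply the linearized stability framework of Lemma \ref{lemma-Jacob} (in its two-species form already stated), which reduces the stability of $(\la, v_\la)$ to the sign structure of the $2\times 2$ matrices $\tilde J_0 = J_U + J_{\bar U}$ at mode zero and $J_i = -(i^2/l^2) D + J_U$ for $i \ge 1$, and then to promote each neutral crossing to a Hopf bifurcation via the standard Hopf theorem on the mode-$i$ invariant subspace. First I would dispense with the mode-$0$ block: from \eqref{matrix4}, $\tilde J_0$ has trace $\la(k-1-2\la)/(k(1+\la))<0$ (since $0<k\le 1$ and $\la>0$) and determinant $\theta(k-\la)/(k(1+\la))>0$, so it is stable for every $\la \in (0, k)$. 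Any instability must therefore originate in some $J_i$ with $i \ge 1$, and it is controlled by the trace/determinant functions $T_i(\la)$ and $D_i(\la)$ in \eqref{Di}.

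Next, I would use hypothesis \eqref{5.10} to eliminate the steady-state (Turing) route entirely. Viewing $D(\la, p)$ in \eqref{Di} as a quadratic in $p$ opening upward, its minimum over $p>0$ equals $\theta(k-\la)/(k(1+\la)) - d_2 C_1(\la)^2/(4 d_1)$; substituting $C_2(\la) = \la C_1(\la)$ reduces positivity of this minimum to $C_2(\la) < 4\theta d_1/d_2$, which holds on $(0, k)$ by Lemma \ref{lemma-preC1C2}(ii) precisely under \eqref{5.10}. Hence $D_i(\la) > 0$ for every $i \ge 1$ and every $\la \in (0, k)$, and the stability of $(\la, v_\la)$ is governed entirely by the sign of $T_i(\la) = C_1(\la) - (d_1+d_2) i^2/l^2$. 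The unimodality of $C_1$ from Lemma \ref{lemma-preC1C2}(i) now settles part (i): if $l \in (0, l^H_1)$, then $(d_1+d_2)i^2/l^2 > C_1(\la_*)$ for every $i \ge 1$, so every $T_i$ is negative. For part (ii), if $l > l^H_1$, then for each integer $i$ with $l > l^H_i$ the equation $C_1(\la) = (d_1+d_2)i^2/l^2$ has exactly two roots $\la^H_{i,-} < \la_* < \la^H_{i,+}$, and strict monotonicity of $i \mapsto (d_1+d_2)i^2/l^2$ forces the nested ordering in the statement. The instability region where some $T_i > 0$ is the union of the $(\la^H_{i,-}, \la^H_{i,+})$, which coincides with the outermost interval $(\la^H_{1,-}, \la^H_{1,+})$.

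Finally, to convert each boundary crossing $\la = \la^H_{i,\pm}$ into a genuine Hopf bifurcation producing spatially non-constant periodic orbits, I would invoke the Hopf theorem on the mode-$i$ invariant subspace spanned by $\cos(ix/l)$: there $J_i$ carries a simple pair of purely imaginary eigenvalues $\pm \mathrm{i}\sqrt{D_i(\la^H_{i,\pm})}$ by step two, and the transversality derivative at the crossing equals $\tfrac{1}{2} C_1'(\la^H_{i,\pm})$, which is nonzero because $\la^H_{i,\pm} \ne \la_*$. The hard part is the mode-separation step: at each candidate Hopf value one must ensure that no other $T_j$ vanishes simultaneously, so that the crossing pair is simple within the full spectrum of the linearization, and one must also verify that no $D_j$ vanishes there; the latter is already taken care of by step two, and the former is generic in $l$ and follows from strict monotonicity of $i \mapsto i^2/l^2$, but deserves careful bookkeeping when enumerating the finitely many crossings claimed in the statement. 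Because each bifurcating eigenmode is supported by $\cos(ix/l)$ with $i \ge 1$, the emerging periodic orbits are automatically spatially non-homogeneous.
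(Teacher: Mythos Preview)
Your argument is correct. The paper does not actually prove this proposition: it is quoted verbatim as \cite[Theorem 3.2]{ChenYu2017} and used as a black box, so there is no in-paper proof to compare against. Your sketch supplies exactly the missing content and follows the natural route: stability of the mode-$0$ block from $0<k\le 1$; positivity of every $D_i(\la)$ under \eqref{5.10} via the discriminant computation reducing to $C_2(\la)<4\theta d_1/d_2$; reduction of the whole stability question to the sign of $T_i(\la)=C_1(\la)-(d_1+d_2)i^2/l^2$; and the unimodality of $C_1$ from Lemma \ref{lemma-preC1C2}(i) to produce the nested pairs $\la^H_{i,\pm}$ and the transversality $C_1'(\la^H_{i,\pm})\ne 0$.

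One small sharpening: your mode-separation caveat (``generic in $l$'') is unnecessarily cautious. At $\la=\la^H_{i,\pm}$ one has $C_1(\la)=(d_1+d_2)i^2/l^2$, hence $T_j(\la^H_{i,\pm})=(d_1+d_2)(i^2-j^2)/l^2\ne 0$ for every $j\ne i$, so the purely imaginary pair is always simple in the full spectrum, not merely generically. Combined with $D_j>0$ for all $j\ge 1$ (your step two), this dispatches the non-resonance bookkeeping outright.
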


We now consider the steady state bifurcations for \eqref{equ-prenonlocal}. The steady state solutions of \eqref{equ-prenonlocal} satisfy the following elliptic problem:
\begin{equation}\label{equ-prenonlocalsteady}
\begin{cases}
d_1u_{xx}+(u+\la)\left(1-\dfrac{1}{kl\pi}\ds\int_0^{l\pi}(u(x)+\la)dx\right)-\dfrac{m(u+\la)(v+v_{\la})}{u+\la+1}=0, & x\in(0,l\pi),\\
d_2v_{xx}-\theta (v+v_{\la}) +\dfrac{m(u+\la)(v+v_{\la})}{u+\la+1}=0, & x\in(0,l\pi),\\
u_x(0)=u_x(l\pi)=0,~v_x(0)=v_x(l\pi)=0, \\
\end{cases}
\end{equation}
which has a trivial equilibrium $(0,0)$, and we want to find its non-trivial solution.
%where $m$ is replaced with $\dfrac{\theta(1+\la)}{\la}$ according to the relation in \eqref{equ-la}.
Then, the condition for steady state bifurcation is that $D(\la,i^2/l^2)=D_i(\la)=0$ which is defined in \eqref{Di}. It is clear that $\ds\frac{\theta(k-\la)}{k(1+\la)}>0$ and $d_2C_1(\la)>0$ for any $\la\in(0,k)$. So if we assume that
\begin{equation}\label{equ-preDelta}
\frac{d_1}{d_2}<\frac{C_2(\la_{\sharp})}{4\theta},
\end{equation}
then from Lemma \ref{lemma-preC1C2}, there exist $\underline{\la},\bar{\la}$ satisfying $0<\underline{\la}<\la_{\sharp}<\bar{\la}<k$ such that
 \begin{equation}\label{lam}
   \frac{d_1}{d_2}=\frac{C_2(\underline{\la})}{4\theta}=\frac{C_2(\bar{\la})}{4\theta},
\end{equation}
and for any $\la\in (\underline{\la},\bar{\la})$, $D(\la,\cdot)=0$ has two positive roots:
\begin{equation}\label{equ-prep+-}
p_{\pm}(\la)=\dfrac{d_2C_1(\la)\pm\sqrt{C_1(\la)(d_2^2C_2(\la)-4d_1d_2\theta)}}{2d_1d_2}.
\end{equation}
By using similar arguments  in the proof of \cite[Lemma 3.9]{YiWeiShi2009}, we obtain the following properties of $p_{\pm}(\la)$.
\begin{proposition}\label{proposition-presteady}
Suppose that $0<k\le1$ and \eqref{equ-preDelta} holds, there exist $\la_{-},~\la_{+}\in[\underline{\la},\bar{\la}]$ such that
\begin{description}
\item (i) $p_{+}(\la)$ is increasing in $(\underline{\la},\la_{+})$ and decreasing in $(\la_{+},\bar{\la})$, and $\max{p_{+}(\la)}=p_{+}(\la_{+})$;
\item (ii) $p_{-}(\la)$ is decreasing in $(\underline{\la},\la_{-})$ and increasing in $(\la_{-},\bar{\la})$, and $\min{p_{-}(\la)}=p_{-}(\la_{-})$.
\end{description}
\end{proposition}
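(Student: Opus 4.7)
The plan is to adapt the geometric approach of \cite{YiWeiShi2009}, Lemma 3.9: view the zero set of $D(\la,p)$ from \eqref{Di} as a closed curve in the $(\la,p)$-plane whose upper and lower arcs are precisely $p_+(\la)$ and $p_-(\la)$ for $\la\in[\underline{\la},\bar{\la}]$, joined at the two endpoints where the discriminant $\beta(\la):=C_1(\la)\bigl(d_2^2C_2(\la)-4d_1d_2\theta\bigr)$ vanishes by the definition \eqref{lam} of $\underline{\la},\bar{\la}$, forcing $p_+(\la)=p_-(\la)=C_1(\la)/(2d_1)$ there. The key computational tool is implicit differentiation of $D(\la,p_\pm(\la))\equiv 0$, which yields $p_\pm'(\la)=-D_\la(\la,p_\pm)/D_p(\la,p_\pm)$ with $D_p(\la,p_\pm)=\pm\sqrt{\beta(\la)}$ (strictly positive on the upper branch and strictly negative on the lower branch in the interior) and $D_\la(\la,p)=-d_2C_1'(\la)p-\theta(1+k)/[k(1+\la)^2]$.

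First I would extract the endpoint behavior. Since $\beta>0$ on $(\underline{\la},\bar{\la})$ and vanishes at the endpoints, $\beta'(\underline{\la})>0$ and $\beta'(\bar{\la})<0$. Plugging the local expansion $\sqrt{\beta(\la)}\sim\sqrt{\beta'(\underline{\la})(\la-\underline{\la})}$ into the formula for $p_\pm'$ gives $\lim_{\la\to\underline{\la}^+}p_+'(\la)=+\infty$ and $\lim_{\la\to\underline{\la}^+}p_-'(\la)=-\infty$; symmetrically, $p_+'(\bar{\la}^-)=-\infty$ and $p_-'(\bar{\la}^-)=+\infty$. By continuity of $p_\pm$ on $[\underline{\la},\bar{\la}]$ and the intermediate value theorem, $p_+$ attains a maximum at some interior $\la_+\in(\underline{\la},\bar{\la})$ and $p_-$ a minimum at some $\la_-\in(\underline{\la},\bar{\la})$. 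Any such critical point must satisfy $D_\la(\la,p_\pm(\la))=0$, and solving this relation in $p$ produces the auxiliary graph $p=h(\la):=-\theta(1+k)/[d_2k(1+\la)^2C_1'(\la)]$; positivity of $h$ forces $C_1'(\la)<0$, hence $\la>\la_*$ by Lemma \ref{lemma-preC1C2}.

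The main obstacle is proving uniqueness of the critical points, so that each branch has exactly one extremum and therefore the claimed monotone structure on either side of it. I would handle this by analyzing the sign of $\la\mapsto D_\la(\la,p_\pm(\la))$: the endpoint analysis above already pins the signs of this quantity at $\underline{\la}$ and $\bar{\la}$, so what remains is to rule out multiple sign changes. I plan to do this by showing that the graph $p=h(\la)$ is strictly monotone on $(\la_*,k)$ (a direct computation using the explicit $(1+\la)^{-2}$ factor together with the sign and monotonicity of $C_1'$, exploiting the hypothesis $0<k\le 1$), so that this graph crosses the upper arc $p_+$ and the lower arc $p_-$ transversally at most once each. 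Equivalently, the smooth curve $\{D_\la=0,\ p>0\}$ meets the closed curve $\{D=0\}$ in at most two points, necessarily one on each arc. Combining this uniqueness with the boundary slope signs of the previous step yields the full monotonicity claim: $p_+$ increases on $(\underline{\la},\la_+)$ and decreases on $(\la_+,\bar{\la})$, and $p_-$ decreases on $(\underline{\la},\la_-)$ and increases on $(\la_-,\bar{\la})$, with the extremal values attained as asserted.
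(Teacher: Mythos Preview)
Your overall strategy matches the paper's: the paper gives no self-contained proof of this proposition and simply refers to \cite[Lemma~3.9]{YiWeiShi2009}, and your geometric scheme (closed loop $\{D=0\}$, vertical tangents at $\underline{\la},\bar{\la}$, horizontal tangents along $\{D_\la=0\}$) is precisely that framework. The endpoint analysis you give is correct; in particular the blow-up of $p_\pm'$ at $\underline{\la},\bar{\la}$ follows cleanly from $p_+-p_-=\sqrt{\beta}/(d_1d_2)$ and the simple zeros of $\beta$ guaranteed by $C_2'(\underline{\la})>0>C_2'(\bar{\la})$ via Lemma~\ref{lemma-preC1C2}.

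The one genuine gap is the uniqueness step. Strict monotonicity of $h(\la)$ on $(\la_*,k)$ does \emph{not} by itself imply that the graph $p=h(\la)$ meets the upper arc $p_+$ (or the lower arc $p_-$) at most once: a monotone curve can cross an oscillating one many times, and the non-oscillation of $p_\pm$ is exactly what you are trying to prove. To close this, substitute $p=h(\la)$ directly into $D(\la,p)=0$. Using $(1+\la)^2C_1'(\la)=(k-2\la-\la^2)/k$, one finds $h(\la)=\theta(1+k)/\bigl(d_2(\la^2+2\la-k)\bigr)$, and after simplification the equation $D(\la,h(\la))=0$ reduces to
\[
\psi(\la):=(k-\la)^2(\la^2+2\la-k)=\frac{d_1\theta k(1+k)^2}{d_2}.
\]
A short computation gives $\psi'(\la)=-2(k-\la)\bigl(2\la^2-(k-3)\la-2k\bigr)$, so on $(\la_*,k)$ the function $\psi$ vanishes at both endpoints, is positive in between, and has its unique interior critical point at $\la_\sharp$ from Lemma~\ref{lemma-preC1C2}. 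Hence $\psi$ is strictly unimodal and the level set $\{\psi=c\}$ contains at most two points. Combined with your endpoint argument (which forces at least one interior critical point on each arc), you get exactly two intersections of $\{D_\la=0\}$ with $\{D=0\}$, one on each arc, and the monotonicity conclusion follows. This is the substantive computation that your phrase ``crosses \ldots\ transversally at most once each'' is standing in for.
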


Now we have the following results on the steady state bifurcations and Hopf bifurcations for system \eqref{equ-prenonlocal} under the condition \eqref{equ-preDelta}.

\begin{theorem}\label{theorem-preysteadyHopf}
Suppose that $d_1,~d_2,~\theta,~0<k\le1$ satisfy \eqref{equ-preDelta}.
 \begin{description}
   \item (i) Let $p_{\pm}(\la)$ be defined by \eqref{equ-prep+-} and $\la_{\pm}$ in Proposition \ref{proposition-presteady}, for  $i\in\mathbb{N}$ we define
\begin{equation*}
l^{S}_{i,-}:=\frac{i}{\sqrt{p_{+}(\la_+)}},~~l^{S}_{i,+}:=\frac{i}{\sqrt{p_{-}(\la_-)}}.
\end{equation*}
When $l\in\left(l^{S}_{i,-},l^{S}_{i,+}\right)$,  there exist exactly two points $\la^{S}_{i,\pm}\in[\underline{\la},\bar{\la}]$  such that $p_{\pm}\left(\la^{S}_{i,\pm}\right)=\dfrac{i^2}{l^2}$. If $\la^{S}_{i,\pm}\ne \la^{S}_{j,\pm}$ for $j\ne i$, then there is a  smooth curve $\Gamma_{i,\pm}$ of positive solutions of \eqref{equ-prenonlocalsteady} bifurcating from the line of constant solutions
 $(\la,u,v)=(\la,\la,u_{\la})$ at $\la=\la_{i,\pm}^{S}$. Moreover, near $\left\{(\la_{i,\pm}^{S},\la_{i,\pm}^{S},v_{\la_{i,\pm}^{S}})\right\}$,
there exists a positive constant $\delta>0$ such that $\Gamma_{i,\pm}\in C^{\infty}$ has the following form:
\begin{equation}\label{equ-Gamman}
\Gamma_{i,\pm}=\left\{\left(\la(s), u(s), v(s)\right):\ -\delta<s<\delta\right\},
\end{equation}
where
\begin{equation*}
\begin{aligned}
u(s)&=\la_{i,\pm}^{S}+s\cos\left(\frac{ix}{l}\right)+sz_{1}(s,x),\\
v(s)&=v_{\la_{i,\pm}^{S}}+\frac{l^2\left(k-\la_{i,\pm}^{S}\right)}{d_2i^2k\left(1+\la_{i,\pm}^{S}\right)}s\cos\left(\frac{ix}{l}\right)+sz_{2}(s,x),
\end{aligned}
\end{equation*}
with $\la(s),~z_{j}(s,x)$ are smooth functions defined for $s\in (-\delta,\delta)$ such that $\la(0)=\la_{i,\pm}^{S}$, and $z_{j}(0,x)=0$ ($j=1,2$).
   \item (ii) Let $l_i^{H},~\la_{n,\pm}^{H}$ be defined in Lemma \ref{lemma-preyHopf} and let $C_2(\la),~\la_{\sharp}$ defined in Lemma \ref{lemma-preC1C2}. Then system \eqref{equ-prenonlocal} undergoes a Hopf bifurcation at $\la=\la_{n,\pm}^{H}$ if $\la_{n,\pm}^{H}\not\in[\underline{\la},\bar{\la}]$, where $\underline{\la},\bar{\la}$ are defined in \eqref{lam}.
 \end{description}
\end{theorem}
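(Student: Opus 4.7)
The plan is to handle parts (i) and (ii) separately. Part (i) is a steady state bifurcation that I would prove by applying the Crandall--Rabinowitz bifurcation-from-simple-eigenvalues theorem to the nonlocal elliptic system \eqref{equ-prenonlocalsteady}, in the same spirit as the proofs of Theorems \ref{theorem-scalarbifurcation} and \ref{theorem-coop}. Part (ii) amounts to checking that the Hopf transversality conditions established in Proposition \ref{lemma-preyHopf} remain valid once the working hypothesis \eqref{5.10} is weakened to \eqref{equ-preDelta}; this is where the restriction $\la_{n,\pm}^{H}\notin[\underline{\la},\bar{\la}]$ enters.

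For part (i) I would recast \eqref{equ-prenonlocalsteady} as $F(\la,u,v)=0$ with $F:\R^+\times X^2\to Y^2$ chosen so that the constant branch $(u,v)=(0,0)$ corresponds to the equilibrium $(\la,v_\la)$, and then compute the Fr\'echet derivative $L_\la:=F_{(u,v)}(\la,0,0)$. Expanding in the Fourier basis $\{\cos(ix/l)\}$, the spectrum of $L_\la$ is determined mode-by-mode by the $2\times 2$ matrices $\tilde J_0=J_U+J_{\bar U}$ and $J_i=-(i^2/l^2)D+J_U$ from \eqref{matrix4}. At $\la=\la_{i,\pm}^{S}$ the determinant $D(\la,i^2/l^2)=D_i(\la)$ vanishes by the definition of $p_\pm(\la)$, so $J_i$ has a zero eigenvalue with eigenvector $(1,h_{i,\pm})^T$; the ratio $h_{i,\pm}=\dfrac{l^2(k-\la_{i,\pm}^S)}{d_2 i^2 k(1+\la_{i,\pm}^S)}$ is read off from the second row of $J_i$. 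The non-resonance hypothesis $\la_{i,\pm}^{S}\ne\la_{j,\pm}^{S}$ for $j\ne i$ ensures that no other $D_j$ vanishes simultaneously, giving $\dim\mathcal N(L_{\la_{i,\pm}^S})=1$ and $\mathrm{codim}\,\mathcal R(L_{\la_{i,\pm}^S})=1$. The transversality condition $F_{\la(u,v)}[(1,h_{i,\pm})^T\cos(ix/l)]\notin\mathcal R(L)$ reduces, after pairing with the adjoint eigenfunction, to $\partial_\la D(\la,i^2/l^2)\ne 0$ at $\la=\la_{i,\pm}^S$; by Proposition \ref{proposition-presteady} the branches $p_\pm(\la)$ are strictly monotone away from $\la=\la_\pm$, and those critical points are excluded because $l\in(l^S_{i,-},l^S_{i,+})$ forces $i^2/l^2\in(p_-(\la_-),p_+(\la_+))$ strictly. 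Crandall--Rabinowitz then produces the bifurcation curve $\Gamma_{i,\pm}$ in the claimed form.

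For part (ii), at $\la=\la_{n,\pm}^{H}$ one has $T_n(\la)=T(\la,n^2/l^2)=0$ by the definition of $\la_{n,\pm}^{H}$ in Proposition \ref{lemma-preyHopf}. If additionally $\la_{n,\pm}^{H}\notin[\underline{\la},\bar{\la}]$, then the discriminant $C_1(\la)(d_2^2 C_2(\la)-4d_1d_2\theta)$ appearing in \eqref{equ-prep+-} is negative at this $\la$ (by the very definition of $\underline{\la},\bar{\la}$ via \eqref{lam}), hence $D_n(\la_{n,\pm}^{H})>0$ and the eigenvalues of $J_n$ form a purely imaginary pair $\pm i\sqrt{D_n(\la_{n,\pm}^{H})}$. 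Transversality follows from $\frac{d}{d\la}\mathrm{Re}\,\mu(\la)\big|_{\la=\la_{n,\pm}^{H}}=\tfrac12 T_n'(\la_{n,\pm}^{H})=\tfrac12 C_1'(\la_{n,\pm}^{H})\ne 0$, since by Lemma \ref{lemma-preC1C2}(i) $C_1'$ vanishes only at $\la=\la_*$, whereas $\la_{n,-}^{H}<\la_*<\la_{n,+}^{H}$ by construction. Standard Hopf bifurcation theory then applies as in \cite{ChenYu2017}.

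The main obstacle is the non-degeneracy bookkeeping for part (i): ensuring both that the zero eigenvalue of $L_{\la_{i,\pm}^{S}}$ is geometrically simple and that $\partial_\la D(\la,i^2/l^2)\ne 0$ there. Both follow from the monotonicity structure of $p_\pm(\la)$ in Proposition \ref{proposition-presteady} together with the open range $l\in(l^S_{i,-},l^S_{i,+})$ and the non-resonance assumption already stated in the theorem. Once those are secured, the remaining verifications --- the explicit form of $h_{i,\pm}$, the codimension-one characterization of $\mathcal R(L)$, and the Hopf transversality in part (ii) --- are essentially routine.
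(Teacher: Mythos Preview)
Your proposal is correct and follows essentially the same route as the paper's own proof: for part (i) you apply Crandall--Rabinowitz to the shifted nonlocal elliptic system, compute the one-dimensional kernel via the mode-$i$ matrix $J_i$, and verify transversality by reducing $F_{\la(u,v)}[\hat q]\notin\mathcal R(L)$ to $\partial_\la D(\la,i^2/l^2)\ne 0$, which the paper likewise obtains by differentiating $D(\la,p_\pm(\la))=0$ and using $p_\pm'(\la_{i,\pm}^S)\ne 0$ from the open range $l\in(l^S_{i,-},l^S_{i,+})$; for part (ii) both you and the paper observe that $\la_{n,\pm}^H\notin[\underline{\la},\bar{\la}]$ forces $D_n(\la_{n,\pm}^H)>0$, with your proof adding the (straightforward) explicit transversality check $C_1'(\la_{n,\pm}^H)\ne 0$ that the paper defers to \cite{ChenYu2017}.
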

\begin{proof}
The proof of part (i) is similar to the  proof of Theorems \ref{theorem-scalarbifurcation} and \ref{theorem-coop}, and we again use the abstract bifurcation theory in \cite{Rabinowitz1971,Shi1999}.

Following the similar setting in \cite{YiWeiShi2009}, we define a nonlinear mapping $H:\ \mathbb{R}^+\times X^2\rightarrow Y^2$ by
\begin{equation}\label{equ-H}
H(\la,u,v)=\left(\begin{array}{cccc}
\medskip
&d_1u_{xx}+(u+\la)\left(1-\dfrac{1}{kl\pi}\ds\int_0^{l\pi}(u(x)+\la)dx\right)-\dfrac{m(u+\la)(v+v_{\la})}{u+\la+1}\\
\medskip
&d_2v_{xx}-\theta (v+v_{\la}) +\dfrac{m(u+\la)(v+v_{\la})}{u+\la+1}\\
\end{array}\right).
\end{equation}
It is clear that $H(\la,0,0)=(0,0)$.
At $\la=\la_{i,\pm}^{S}$, we have
\begin{equation}\label{equ-hatL}
\begin{aligned}
 &H_{(u,v)}\left(\la_{i,\pm}^{S},0,0\right)[\varphi,\psi]
 =\left(\begin{array}{cc}
\medskip
 &d_1\varphi_{xx}-\dfrac{\la_{i,\pm}^{S}}{kl\pi}\ds\int_{0}^{l\pi}\varphi dx+C_1(\la_{i,\pm}^{S})\varphi-\theta\psi,\\
\medskip
 &d_2\psi_{xx}+A(\la_{i,\pm}^{S})\varphi.\\
\end{array}\right):=\hat{L}[\varphi,\psi],\end{aligned}
\end{equation}
where $A(\la):=\dfrac{k-\la}{k(1+\la)}$ and $C_1(\la)$ is defined in \eqref{equ-C1}. We assume that $\la^{S}_{i,\pm}\ne \la^{S}_{j,\pm}$ for $j\ne i$, then the kernel is $\mathcal{N}(\hat{L})=\textrm{Span}\left\{\hat{q}=(1, \hat{h})\varphi_1 \right\}$ where $\hat{h}=\dfrac{l^2 A\left(\la_{i,\pm}^{S}\right)}{d_2i^2}$, thus $\dim{\mathcal{N}(\hat{L})}=1$. The range space of $\hat{L}$ is $\mathcal{R}(\hat{L})=\left\{(f_1,f_2)\in Y^2:\ \lan y,(f_1,f_2)\rangle=0\right\}$, where $y$ is defined by
\begin{equation*}
\lan y,(f_1,f_2)\rangle=\ds\int_0^{l\pi}\left(f_1-\dfrac{l^2\theta}{d_2 i^2}f_2\right)\varphi_1dx,
\end{equation*}
and thus $\textrm{codim}\mathcal{R}(\hat{L})=1$.

Next, We prove that $H_{\la(u,v)}\left(\la_{i,\pm}^{S}, 0,0\right)[\hat{q}]\not\in\mathcal{R}(\hat{L})$. From \eqref{equ-H}, we have
\begin{equation}\label{equ-Gderibeta}
 H_{\la(u,v)}\left(\la_{i,\pm}^{S},0,0\right)[\hat{q}]=\left(C_1'(\la_{i,\pm}^{S}), A'(\la_{i,\pm}^{S})\right)^T\varphi_i.
\end{equation}
Using the definition of $\mathcal{R}(\hat{L})$, we obtain the integral
\begin{equation}\label{equ-transversality}
T:=\ds\int_0^{l\pi}\left(f_1-\dfrac{l^2\theta}{d_2 i^2}f_2\right)\varphi_idx=\ds\int_0^{l\pi}\left(C_1'(\la_{i,\pm}^{S})-\dfrac{l^2\theta A'(\la_{i,\pm}^{S})}{d_2i^2}\right)\varphi^2_idx.
\end{equation}
%Because $\la_{i,\pm}^{S}$ satisfies $p_{\pm}\left(\la_{i,\pm}^{S}\right)=\dfrac{i^2}{l^2}$ such that $D_i\left(\la_{i,\pm}^{S}\right)=0$, which reads
Since $p_{\pm}(\la)$ satisfies $D(\la,p_{\pm}(\la))=0$, we have
\begin{equation}\label{equ-Di}
d_1d_2p_{\pm}^{2}(\la)-d_2C_1(\la)p_{\pm}(\la)+A(\la)=0.
\end{equation}
Differentiating \eqref{equ-Di} with respect to $\la$ at $\la=\la_{i,\pm}^{S}$, we obtain
\begin{equation*}
p'_{\pm}\left(\la_{i,\pm}^{S}\right)=\dfrac{\theta A'\left(\la_{i,\pm}^{S}\right)-d_2C_1'\left(\la_{i,\pm}^{S}\right)p_{\pm}\left(\la_{i,\pm}^{S}\right)}{2d_1d_2p_{\pm}\left(\la_{i,\pm}^{S}\right)-d_1C_1'\left(\la_{i,\pm}^{S}\right)}.
\end{equation*}
Since $l\in\left(l^{S}_{i,-},l^{S}_{i,+}\right)$, thus $p'_{\pm}\left(l^{S}_{i,\pm}\right)\ne 0$ which implies that
\begin{equation*}
\theta A'\left(\la_{i,\pm}^{S}\right)-d_2C_1'\left(\la_{i,\pm}^{S}\right)p_{\pm}\left(\la_{i,\pm}^{S}\right)\ne 0,
\end{equation*}
together with $p_{\pm}\left(\la_{i,\pm}^{S}\right)=\dfrac{i^2}{l^2}$, we have $T\ne 0$ with $T$ defined in \eqref{equ-transversality}, thus $H_{\la(u,v)}\left(\la_{i,\pm}^{S}, 0,0\right)[\hat{q}]\not\in\mathcal{R}(\hat{L})$ is proved. By applying Theorem 1.7 in \cite{Rabinowitz1971}, we obtain the result in part (i).

As for part (ii), from \eqref{equ-prep+-},  when $\la\in[\underline{\la},\bar{\la}]$, we have $D(\la,\cdot)\leq0$ which does not satisfy the condition of Hopf bifurcation. But when $\la_{n,\pm}^{H}\not\in[\underline{\la},\bar{\la}]$, we have $D(\la_{n,\pm}^{H},\cdot)>0$ thus Hopf bifurcations can occur.
\end{proof}

Proposition \ref{lemma-preyHopf} and Theorem \ref{theorem-preysteadyHopf} define two minimal domain size(patch length) $l^{H}_1$ for Hopf bifurcation and $l^{S}_{1,-}$ for steady state bifurcation. Together with the threshold conditions \eqref{5.10} and \eqref{equ-preDelta} for the diffusion coefficients, we have the following classification of different scenarios of steady state and Hopf bifurcations when using $\la$ as the bifurcation parameter.

\begin{corollary}\label{coro-prescenario}
Let $C_2(\la),~\la_{\sharp}$, $l^{H}_1$ be defined in Lemma \ref{lemma-preyHopf} and $l^{S}_{1,-}$ be defined in Theorem \ref{theorem-preysteadyHopf}. Denote $M=\dfrac{C_2(\la_{\sharp})}{4\theta}$, for the bifurcation scenarios in system \eqref{equ-prenonlocal}, we have the following results:
\begin{description}
\item (i) when $d_1>d_2M,~l<l^{H}_1$ or $d_1<d_2M,~l<l^{S}_{1,-}$, the constant steady state $(\la, v_{\la})$ is locally asymptotically stable, and both steady state and Hopf bifurcation will not occur;
\item (ii) when $d_1>d_2M,~l>l^{H}_1$ or $d_1<d_2M,~l^{H}_{1}<l<l^{S}_{1,-}$, Hopf bifurcation can occur, but steady state bifurcation cannot occur;
\item (iii) when $d_1<d_2M,~l<l^{H}_1$ and $l>l^{S}_{1,-}$, steady state bifurcation can occur, but Hopf bifurcation can not occur;
\item (iv) when $d_1<d_2M,~l>l^{H}_1$ and $l>l^{S}_{1,-}$, both steady state and Hopf bifurcation can occur.
\end{description}
\end{corollary}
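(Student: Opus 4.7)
The plan is to carry out a direct case analysis, combining Proposition \ref{lemma-preyHopf} (Hopf bifurcation under $d_1/d_2 > M$) with Theorem \ref{theorem-preysteadyHopf} (steady state bifurcation under $d_1/d_2 < M$, together with the remaining Hopf possibilities in that regime). The critical sizes $l_1^H$ and $l_{1,-}^S$ are, by construction, the smallest domain lengths for which the first Laplacian eigenvalue $1/l^2$ can lie inside the Hopf wave-number interval $I_H$ and the steady state wave-number interval $I_S$ associated with the trace and determinant functions in \eqref{Di}, respectively. Hence $l < l_1^H$ forbids any $i^2/l^2 \in I_H$, and $l < l_{1,-}^S$ forbids any $i^2/l^2 \in I_S$; the converse implications are supplied directly by Proposition \ref{lemma-preyHopf}(ii) and Theorem \ref{theorem-preysteadyHopf}(i).

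For case (i) I would split into two subcases. If $d_1 > d_2 M$, then condition \eqref{equ-preDelta} fails and $D(\lambda, p) > 0$ for all admissible $(\lambda, p)$, so no steady state bifurcation is possible, while $l < l_1^H$ excludes Hopf by Proposition \ref{lemma-preyHopf}(i). If instead $d_1 < d_2 M$ and $l < l_{1,-}^S$, then Theorem \ref{theorem-preysteadyHopf}(i) rules out steady state bifurcation, and to rule out Hopf I would argue that in this regime $l_{1,-}^S \leq l_1^H$, so that $l < l_{1,-}^S$ already forces $l < l_1^H$; this comparison follows from the relation between $\max p_+(\lambda)$ on $[\underline\lambda, \bar\lambda]$ and $C_1(\lambda_*)/(d_1+d_2)$ via Lemma \ref{lemma-preC1C2} and Proposition \ref{proposition-presteady}.

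Cases (ii) and (iii) are then handled analogously: crossing exactly one of the two thresholds activates the corresponding bifurcation type and leaves the other dormant, by a direct combination of the two earlier results. In case (iv), both thresholds are crossed with $d_1 < d_2 M$, and here one must additionally verify the existence of at least one Hopf value $\lambda_{n,\pm}^H$ outside $[\underline\lambda, \bar\lambda]$ as required by Theorem \ref{theorem-preysteadyHopf}(ii); for $l$ above both thresholds the Hopf locus $T(\lambda, 1/l^2) = 0$ extends beyond the $\lambda$-interval on which $D(\lambda, 1/l^2) < 0$, producing the required Hopf value.

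The main obstacle I expect is precisely this comparison between $l_1^H$ and $l_{1,-}^S$ in the second subcase of (i), and the dual geometric interlacing argument in case (iv). Both reduce to tracking how the roots of $T(\lambda, p)$ and $D(\lambda, p)$ move in the $(\lambda, p)$ plane as $d_1, d_2, l$ vary, but the monotonicity properties in Lemma \ref{lemma-preC1C2} and Proposition \ref{proposition-presteady} make these comparisons tractable. Once those inequalities are pinned down, the corollary follows by reading off Proposition \ref{lemma-preyHopf} and Theorem \ref{theorem-preysteadyHopf} scenario by scenario.
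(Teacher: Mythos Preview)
The paper does not supply a proof of this corollary; it is stated as an immediate summary of Proposition~\ref{lemma-preyHopf} and Theorem~\ref{theorem-preysteadyHopf}, with the four regimes illustrated in Fig.~\ref{Fig-prescenario}. Your overall strategy---a case-by-case read-off of those two results, using $l_1^H$ and $l_{1,-}^S$ as the minimal domain sizes for the Hopf and steady state wave-number intervals---is exactly what the paper intends, and for cases where $d_1>d_2M$ your argument is complete.

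There is, however, a genuine gap in your treatment of the second subcase of (i). You assert that $d_1<d_2M$ forces $l_{1,-}^S\le l_1^H$, so that $l<l_{1,-}^S$ automatically gives $l<l_1^H$. This ordering does \emph{not} hold in general. Indeed, the corollary itself contains the range $l_1^H<l<l_{1,-}^S$ in case~(ii) (which requires $l_1^H<l_{1,-}^S$) and the range $l_{1,-}^S<l<l_1^H$ in case~(iii) (which requires the opposite), so both orderings are meant to occur for different choices of $d_1,d_2,\theta,k$; this is why Fig.~\ref{Fig-prescenario} has two panels. Your proposed monotonicity argument via Lemma~\ref{lemma-preC1C2} and Proposition~\ref{proposition-presteady} cannot therefore yield a universal inequality, and the same objection applies to the ``dual interlacing'' you flag for case~(iv).

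The resolution is not to prove an ordering but to read the statement correctly: the four cases are the four regions into which the curves $l=l_1^H(d_1)$ and $l=l_{1,-}^S(d_1)$ partition the $(d_1,l)$-plane. The inequalities written in (i)--(iv), taken literally, overlap (for instance $l<l_{1,-}^S$ in (i) and $l_1^H<l<l_{1,-}^S$ in (ii) are not disjoint when $l_1^H<l_{1,-}^S$); they are shorthand for that partition. Once you interpret them this way, each region is characterized simply by which side of each threshold curve it lies on, and the conclusion follows directly from Proposition~\ref{lemma-preyHopf} and Theorem~\ref{theorem-preysteadyHopf} with no further comparison needed.
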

A similar classification was given in \cite{ChenShi2014} on the pattern formation conditions for a diffusive Gierer-Meinhardt system. The results in  Corollary \ref{coro-prescenario} are depicted numerically in Fig.  \ref{Fig-prescenario}.

\begin{figure}[htp]
\centering
\subfloat[]{\includegraphics[scale=0.6]{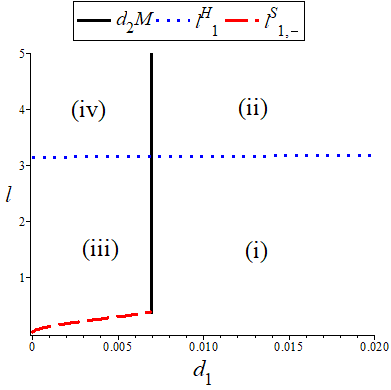}}\hspace{0.3cm}
\subfloat[]{\includegraphics[scale=0.6]{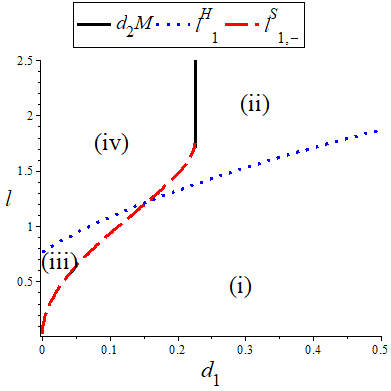}}\quad
  \caption{Illustration of possible bifurcation scenarios in system \eqref{equ-prenonlocal}, which  correspond to the four cases in Corollary \ref{coro-prescenario}. The parameters used: (a) $d_2=1,~\theta=1,~k=0.5$; (b) $d_2=0.1,~\theta=0.01,~k=1$.
  \label{Fig-prescenario}}
\end{figure}

\begin{remark}\label{rem4}
(i) In Theorem \ref{theorem-preysteadyHopf}, the direction of the steady state bifurcations in system \eqref{equ-prenonlocal} can be  determined similarly as in Theorem \ref{theorem-coop}.

%(ii) In Corollary \ref{coro-prescenario}, $l^{S}_{1,-}$ is the minimal domain length (patch size) for steady state bifurcation to occur and $l^{H}_{1}$ is the minimal domain length for Hopf bifurcation.

(ii) For the stability of periodic orbits bifurcated through a Hopf bifurcation, we refer readers for the calculation of normal form in \cite{YiWeiShi2009} which is for a classical reaction-diffusion system as the calculation for our model with spatial average is similar. Because of the introduction of the spatial average term, so some differences happen for the derivatives of the nonlinear functions $f(\la,u,v)$ and $g(\la,u,v)$ at $(\la,v_{\la})$:
\begin{equation}\label{equ-prederivatives}
\begin{split}
f_{uu}&=\dfrac{2(k-\la)}{k(1+\la)^2},~f_{uv}=-\frac{\theta}{\la(1+\la)},~f_{vv}=0,\\
f_{uuu}&=-\frac{6(k-\la)}{k(1+\la)^3},~f_{uuv}=\frac{2\theta}{\la(1+\la)^2},~f_{uvv}=0,~f_{vvv}=0,\\
g_{uu}&=-\dfrac{2(k-\la)}{k(1+\la)^2},~g_{uv}=\frac{\theta}{\la(1+\la)},~g_{vv}=0,\\
g_{uuu}&=\frac{6(k-\la)}{k(1+\la)^3},~g_{uuv}=-\frac{2\theta}{\la(1+\la)^2},~g_{uvv}=0,~g_{vvv}=0.
\end{split}
\end{equation}
Other calculations are similar, so we will not repeat here. Also, in \cite{Chen2018}, the Hopf bifurcation direction in a diffusive Holling-Tanner predator-prey model with spatial average is computed similarly.
\end{remark}

\begin{figure}[htp]
\centering
\subfloat[(i)]{\includegraphics[scale=0.35]{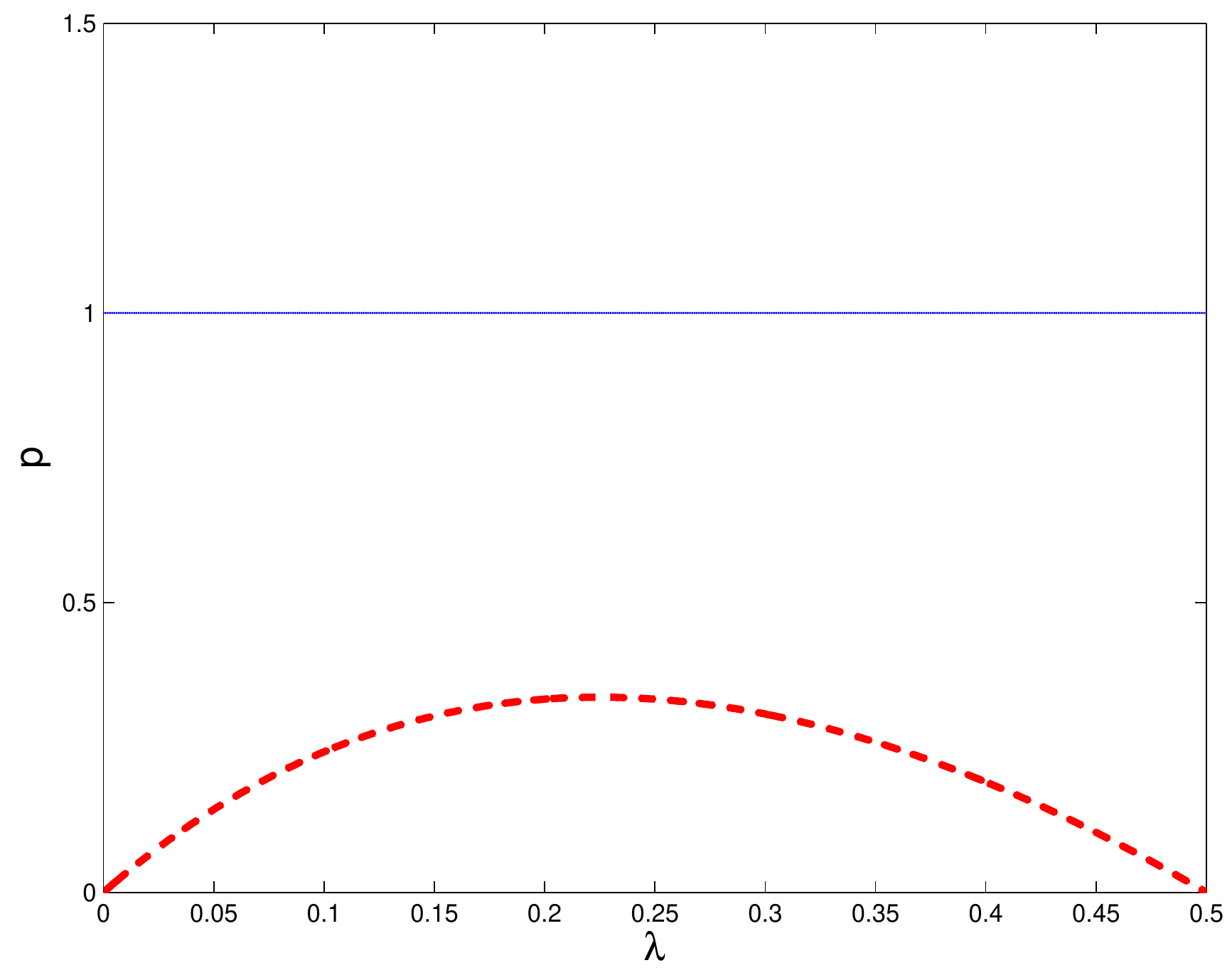}}\hspace{0.3cm}
\subfloat[(ii)]{\includegraphics[scale=0.35]{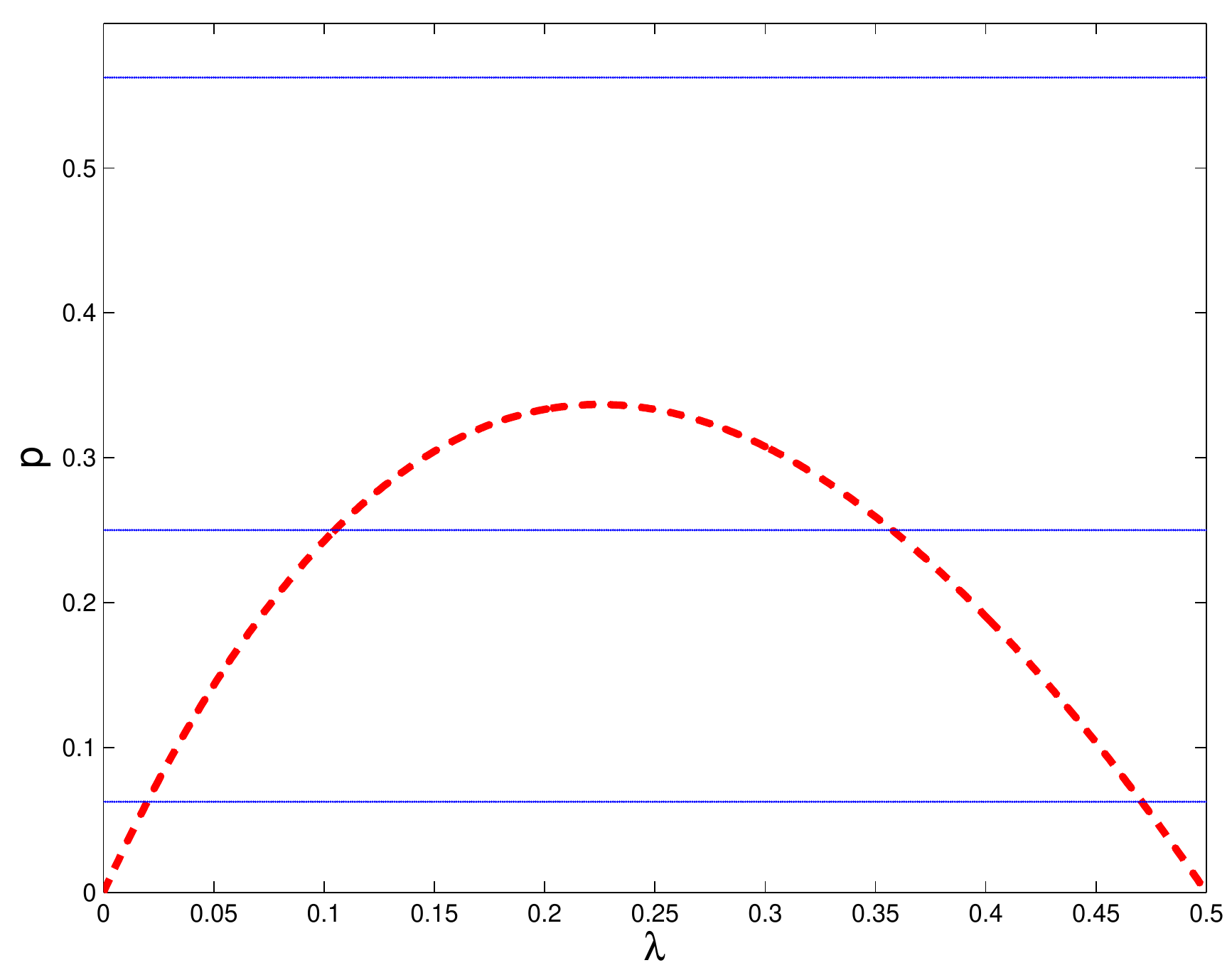}}\quad
\subfloat[(iii)]{\includegraphics[scale=0.35]{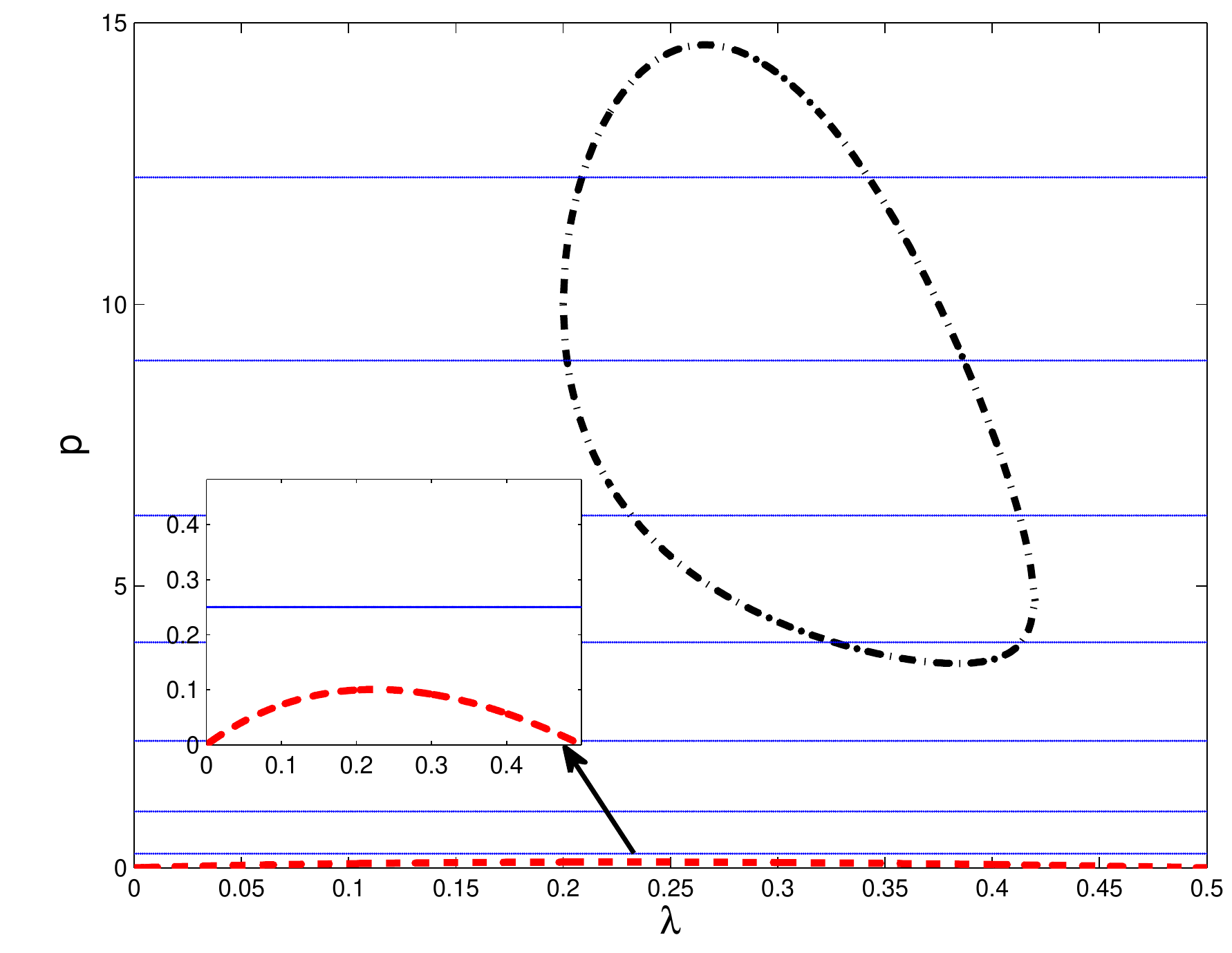}}\hspace{0.3cm}
\subfloat[(iv)]{\includegraphics[scale=0.35]{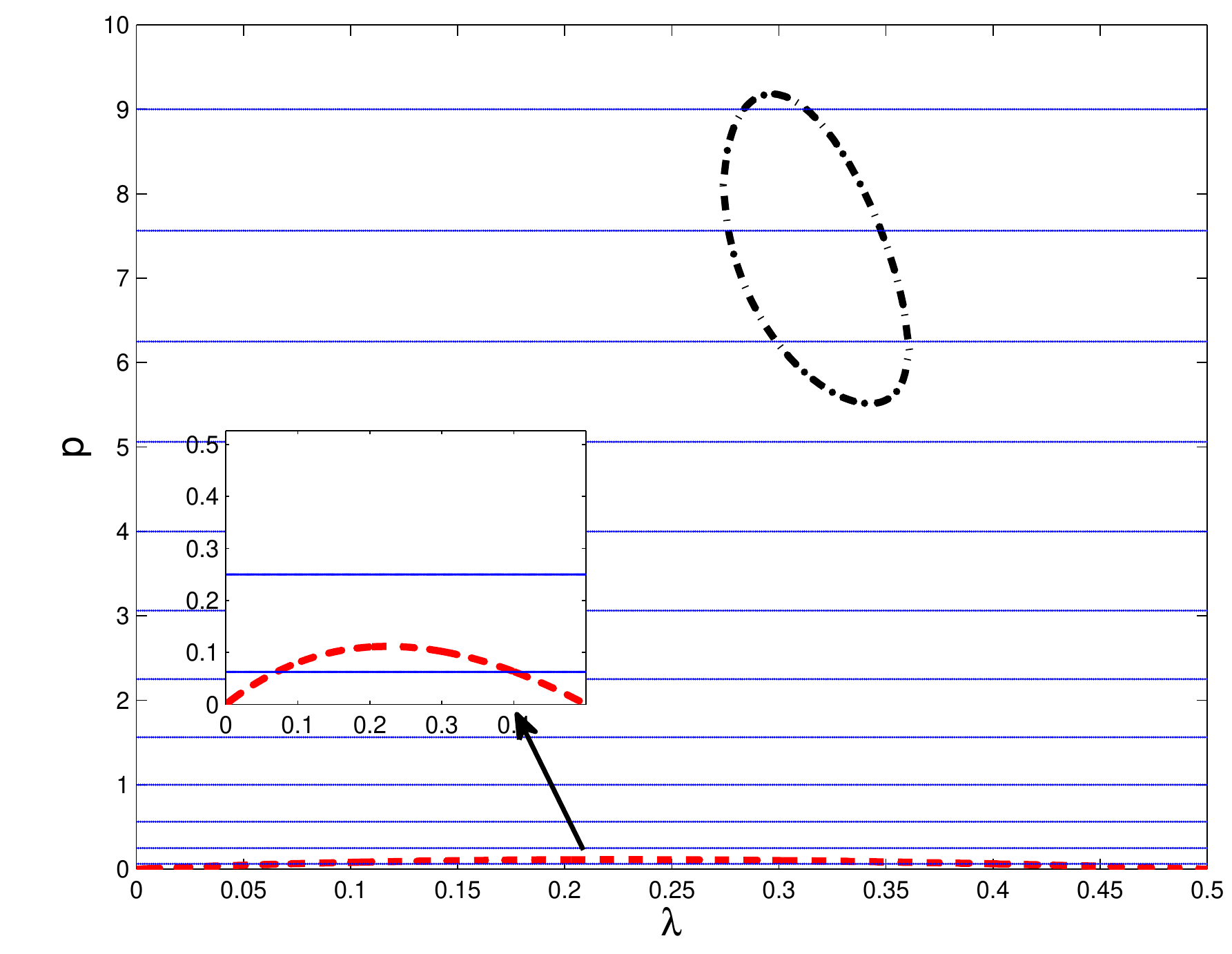}}
  \caption{Bifurcation diagrams corresponding to four scenarios in Fig. \ref{Fig-prescenario}. In each case, the graphs of  $D(\la,p)=0$ (black dash-dot curve), $T(\la,p)=0$ (red dashed curve) are plotted, and the blue solid horizontal lines are $p=i^2/l^2$ with $i\in\mathbb{N}$. The parameters for four diagrams are, respectively: (i) $d_1=0.1,~d_2=0.2,~\theta=1,~k=0.5,~l=1$; (ii) $d_1=0.1,~d_2=0.2,~\theta=1,~k=0.5,~l=4$; (iii) $d_1=0.005,~d_2=1,~\theta=1,~k=0.5,~l=2$; (iv) $d_1=0.006,~d_2=0.9,~\theta=1,~k=0.5,~l=4$.
  \label{Fig-prediagram}}
\end{figure}

Fig. \ref{Fig-prediagram} show the bifurcation diagrams of system \eqref{equ-prenonlocal} by plotting the graphs of zero level set of determinant function $D(\la,p)=0$ and trace function $T(\la,p)=0$. The intersection points of the closed loop $D(\la,p)=0$ and lines $p=i^2/l^2$ determine the steady state bifurcation points, while the intersection points of $T(\la,p)=0$ and $p=i^2/l^2$ outside of the loop $D(\la,p)=0$ determine the Hopf bifurcation points. Cases (i) and (ii) here belong to Case (ii-a1) in Fig. \ref{fig-2}, and the set $\{(\la,p):D(\la,p)=0\}$ is empty; and cases (iii) and (iv) belong to Case (ii-d) in Fig. \ref{fig-2}, and the set $\{(\la,p):D(\la,p)=0\}$ is a closed loop.

Guided by  Corollary \ref{coro-prescenario}, Fig. \ref{Fig-prescenario}, and Fig. \ref{Fig-prediagram}, we use numerical simulations to verify the spatiotemporal pattern formations.

\begin{example}\label{exm1}
When
\begin{equation}\label{equ-preparameter33}
d_1=0.1,~d_2=0.2,~\theta=1,~k=0.5,~l=1,
\end{equation}
the case (i) in Corollary \ref{coro-prescenario} occurs, and it is predicted that neither steady state nor Hopf bifurcation will occur. Fig. \ref{Fig-solupre1} shows that the constant steady state is asymptotically stable for this choice of parameters.
\end{example}

\begin{figure}[htp]
\centering
\subfloat[Prey]{\includegraphics[scale=0.45]{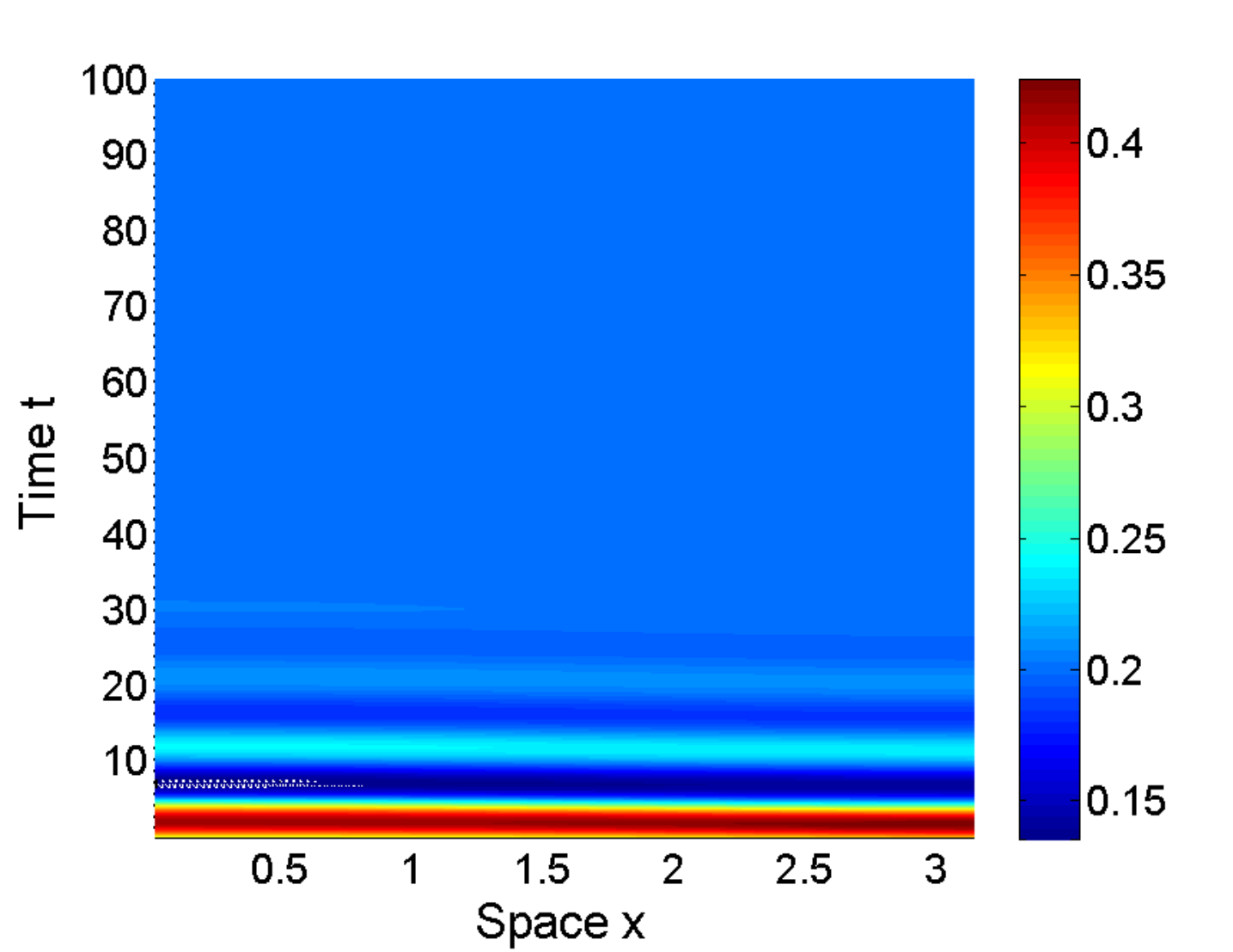}}
\subfloat[Predator]{\includegraphics[scale=0.45]{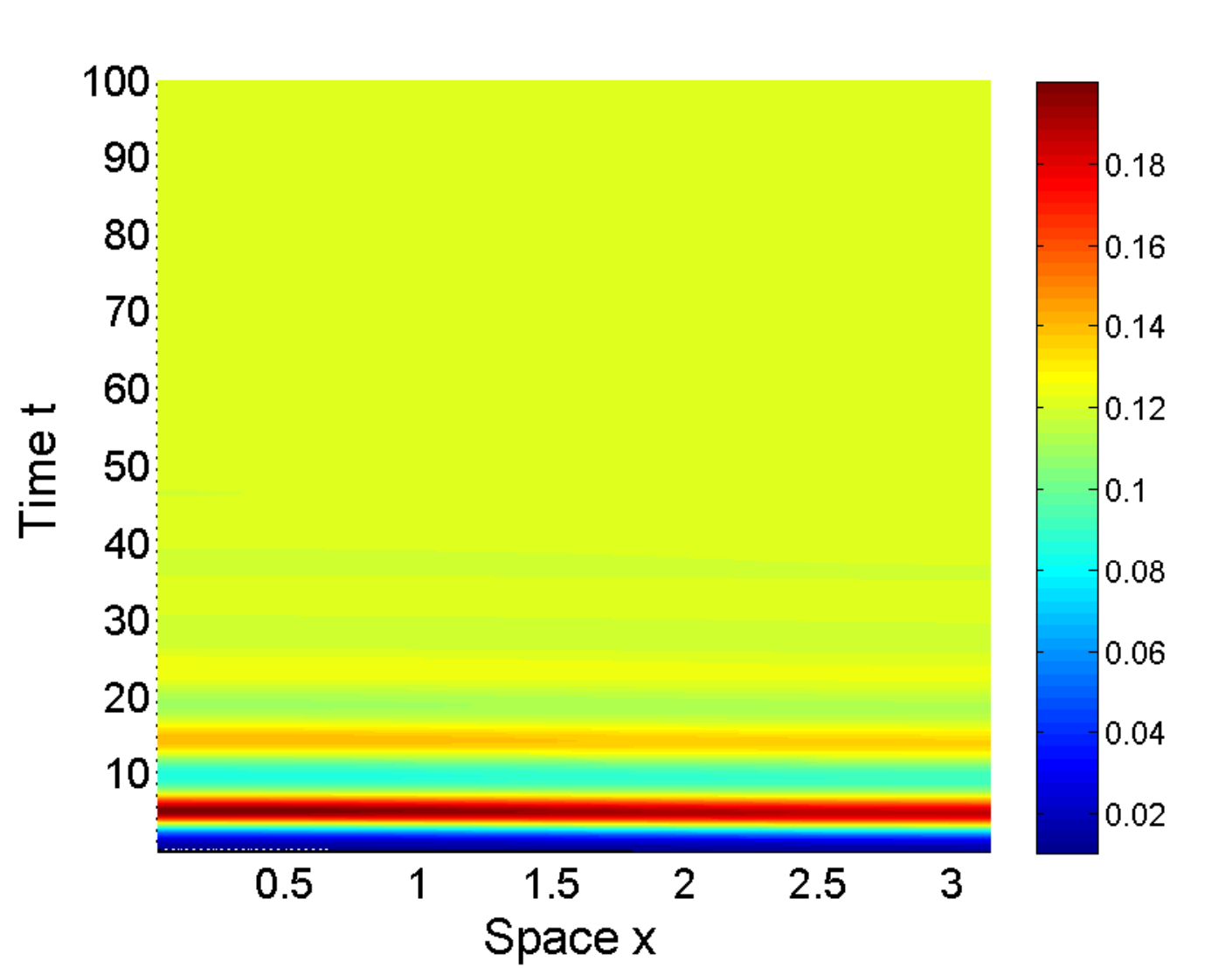}}
\caption{Dynamics of Eq. \eqref{equ-prenonlocal} with parameters in \eqref{equ-preparameter33} and $m=6$ ($\la=0.2$): converges to the constant steady state with initial values $u_0(x)=0.4-0.1\cos(x/4),~v_0(x)=0.02-0.01\cos(x/4)$.  \label{Fig-solupre1}}
\end{figure}

\begin{example}\label{exm2}
When
\begin{equation}\label{equ-preparameter44}
d_1=0.1,~d_2=0.2,~\theta=1,~k=0.5,~l=4,
\end{equation}
it is the case (ii) in Corollary \ref{coro-prescenario}, and only spatially non-homogeneous Hopf bifurcations can occur and the spatially non-homogeneous Hopf bifurcation points are:
\begin{equation*}
\la^{H}_{1,-}=0.0199,~\la^{H}_{1,+}=0.4707,~\la^{H}_{2,-}=0.1049,~\la^{H}_{2,+}=0.3576.
\end{equation*}
Fig. \ref{Fig-solupre2} shows that when $\la=0.2$, both mode-1 and mode-2 spatially non-homogeneous time-periodic patterns are observed with different initial conditions.

\begin{figure}[tp]
\centering
\subfloat[Prey]{\includegraphics[scale=0.45]{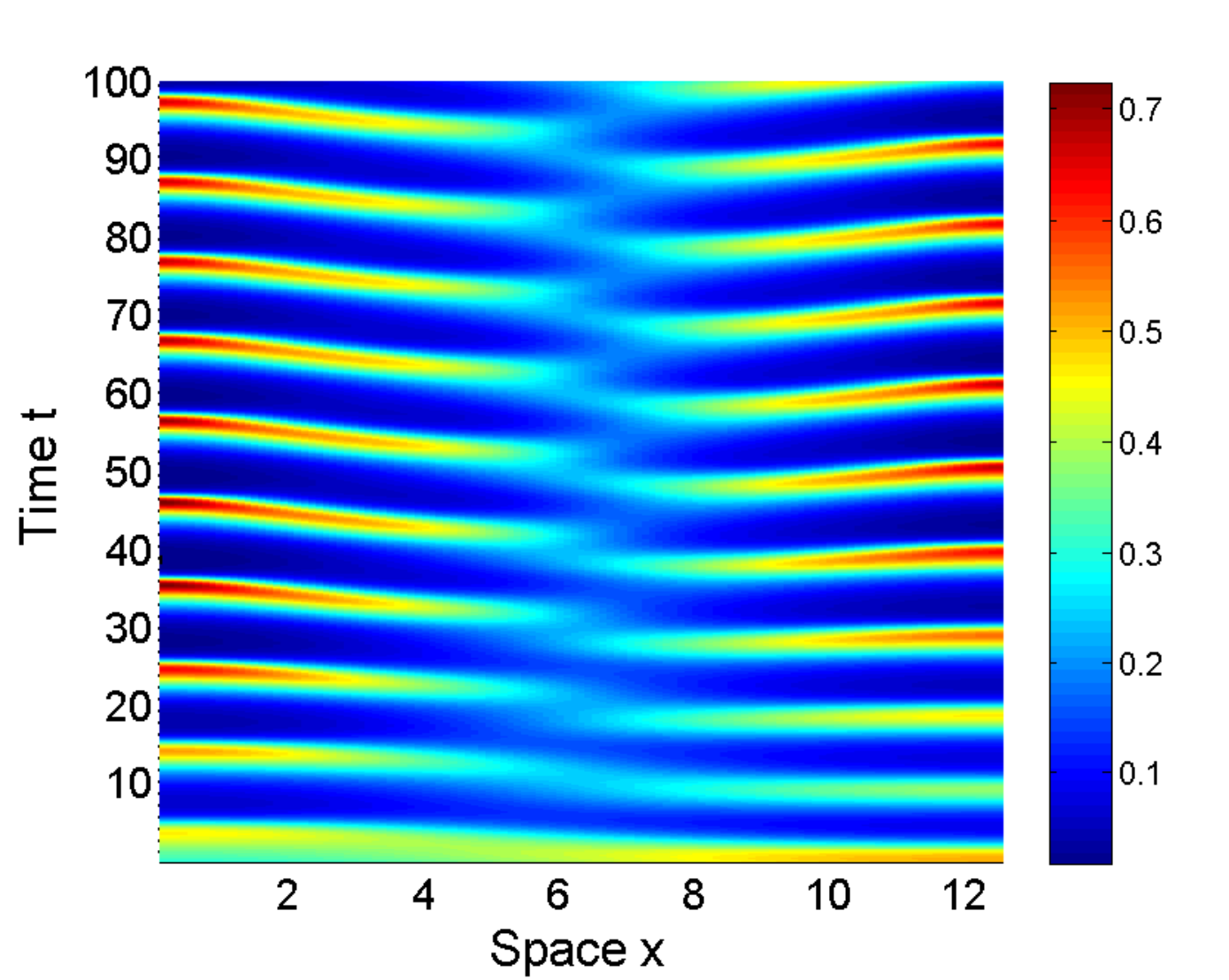}}
\subfloat[Predator]{\includegraphics[scale=0.45]{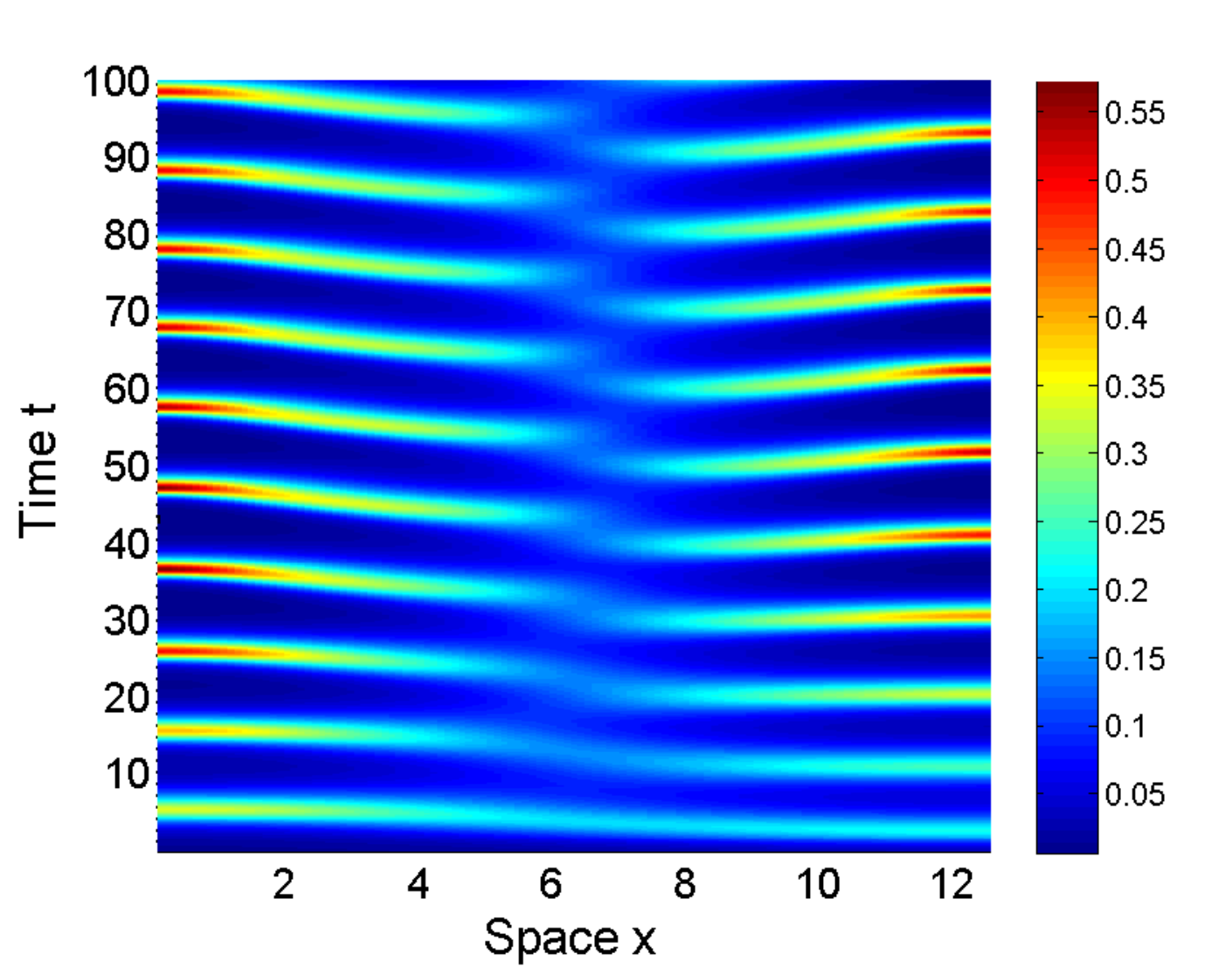}}\quad
\subfloat[Prey]{\includegraphics[scale=0.45]{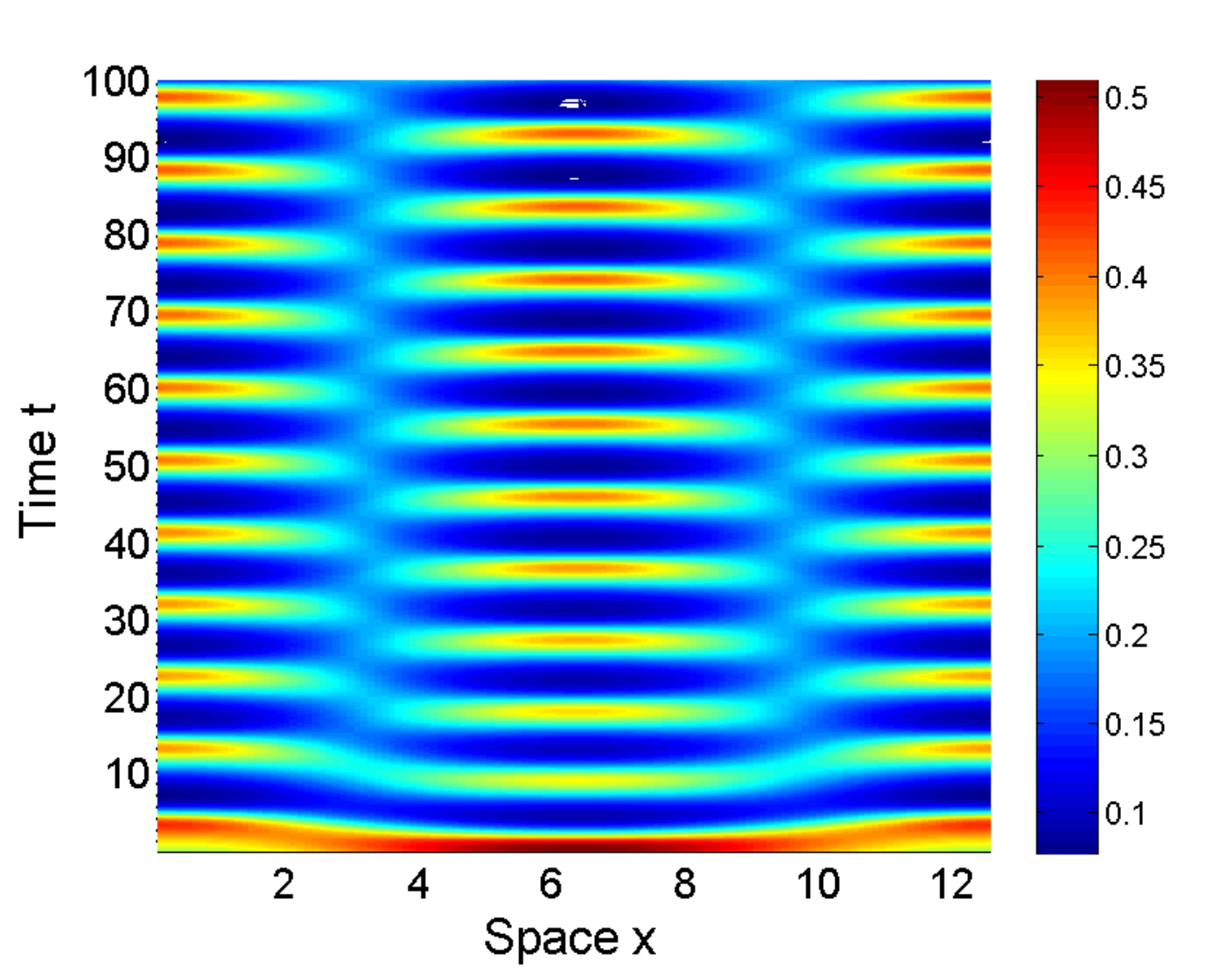}}
\subfloat[Predator]{\includegraphics[scale=0.45]{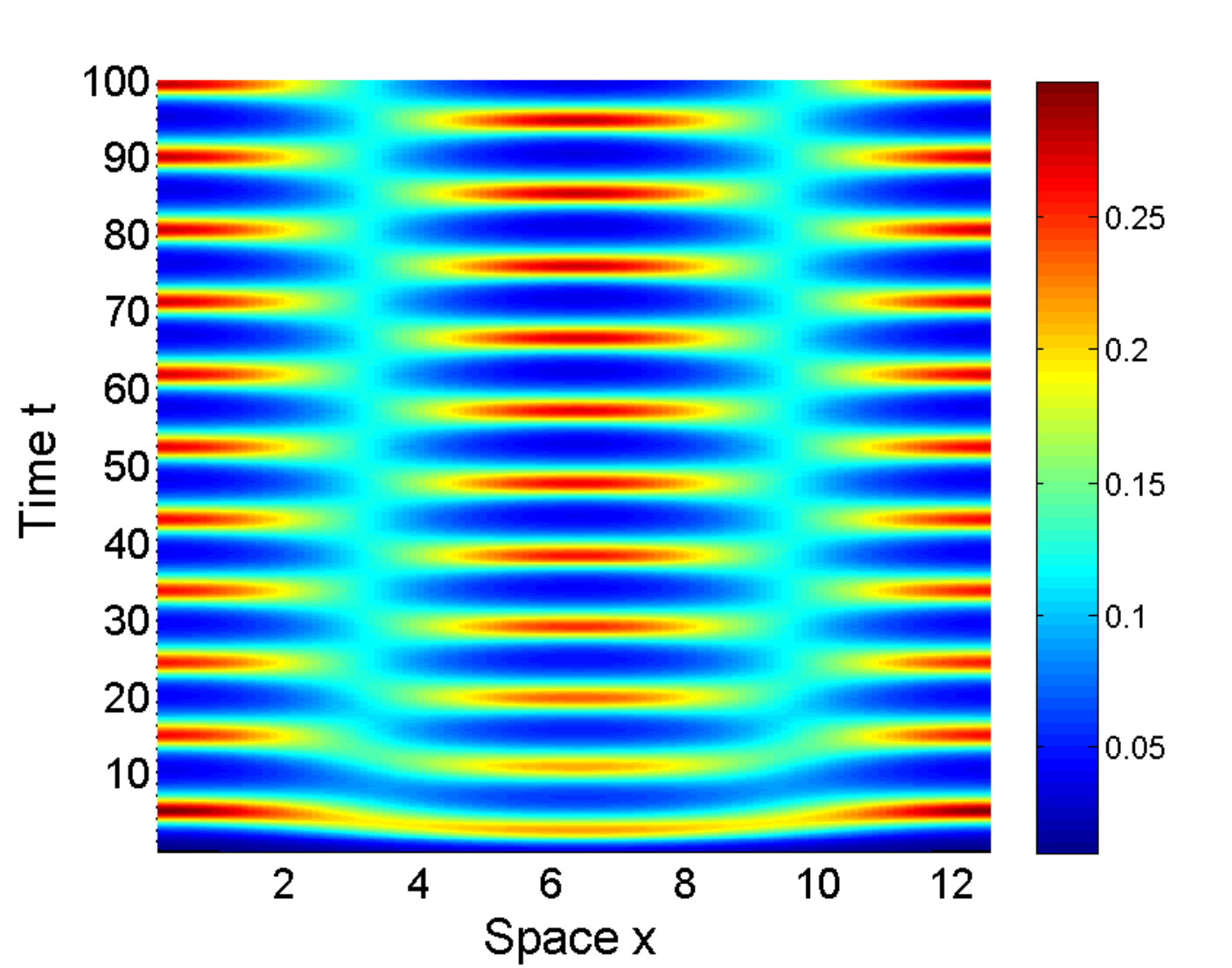}}
  \caption{Dynamics of Eq. \eqref{equ-prenonlocal} with parameters in \eqref{equ-preparameter44} and $m=6$ ($\la=0.2$): (top row) mode-1 spatiotemporal patterns with initial values $u_0(x)=0.4-0.1\cos(x/4),~v_0(x)=0.02-0.01\cos(x/4)$; (bottom row) mode-2 spatiotemporal patterns with initial values $u_0(x)=0.4-0.1\cos(x/2),~v_0(x)=0.02-0.01\cos(x/2)$. \label{Fig-solupre2}}
\end{figure}
\end{example}

\begin{example}\label{example-pre3}
When
\begin{equation}\label{equ-preparameter55}
d_1=0.005,~d_2=1,~\theta=1,~k=0.5,~l=2,
\end{equation}
the graphs of $D_i=0$ and $T_i=0$ are depicted in $(\la,p)$ plane in Fig. \ref{Fig-prediagram} (iii). In this case Hopf bifurcations cannot occur and steady state bifurcations occur. And the steady state bifurcation points can be computed as:
\begin{equation*}
\begin{split}
\la^{S}_{4,-}=0.3264,~\la^{S}_{4,+}=0.4136,~\la^{S}_{5,-}=0.2317,~\la^{S}_{5,+}=0.4126,\\
\la^{S}_{6,-}=0.2018,~\la^{S}_{6,+}=0.3868,~\la^{S}_{7,-}=0.2087,~\la^{S}_{7,+}=0.3423.
\end{split}
\end{equation*}
In Fig.  \ref{Fig-solupre3}, with $\la=0.40$, the mode-4 and mode-5 spatially non-homogeneous steady states are observed  with different initial conditions.  Compared with the local system \eqref{equ-prelocal} in which there is no stable spatial patterns, we can conclude that the spatially non-homogeneous steady states are induced by the nonlocal competition.
\end{example}

\begin{figure}[htp]
\centering
\subfloat[Prey]{\includegraphics[scale=0.45]{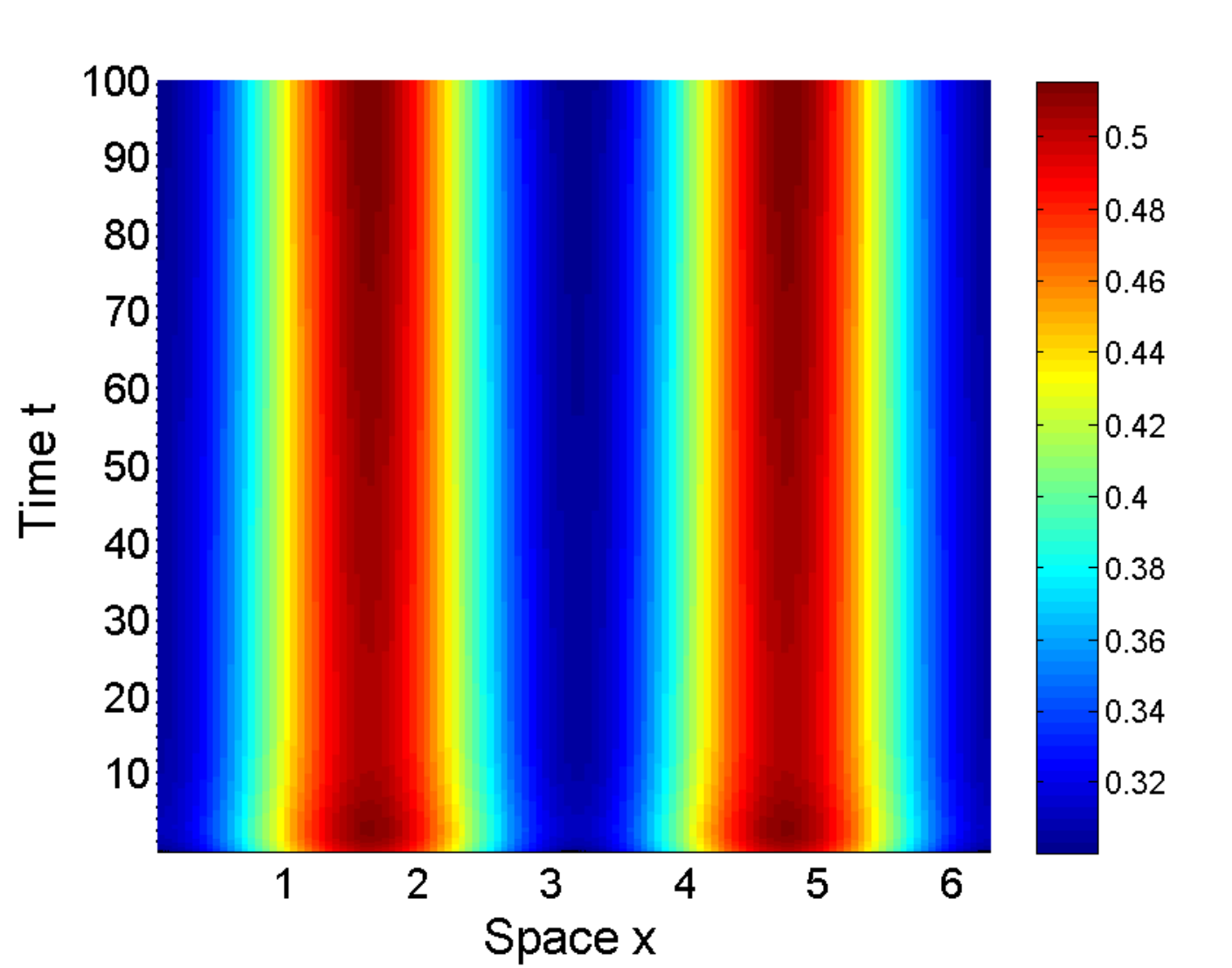}}
\subfloat[Predator]{\includegraphics[scale=0.45]{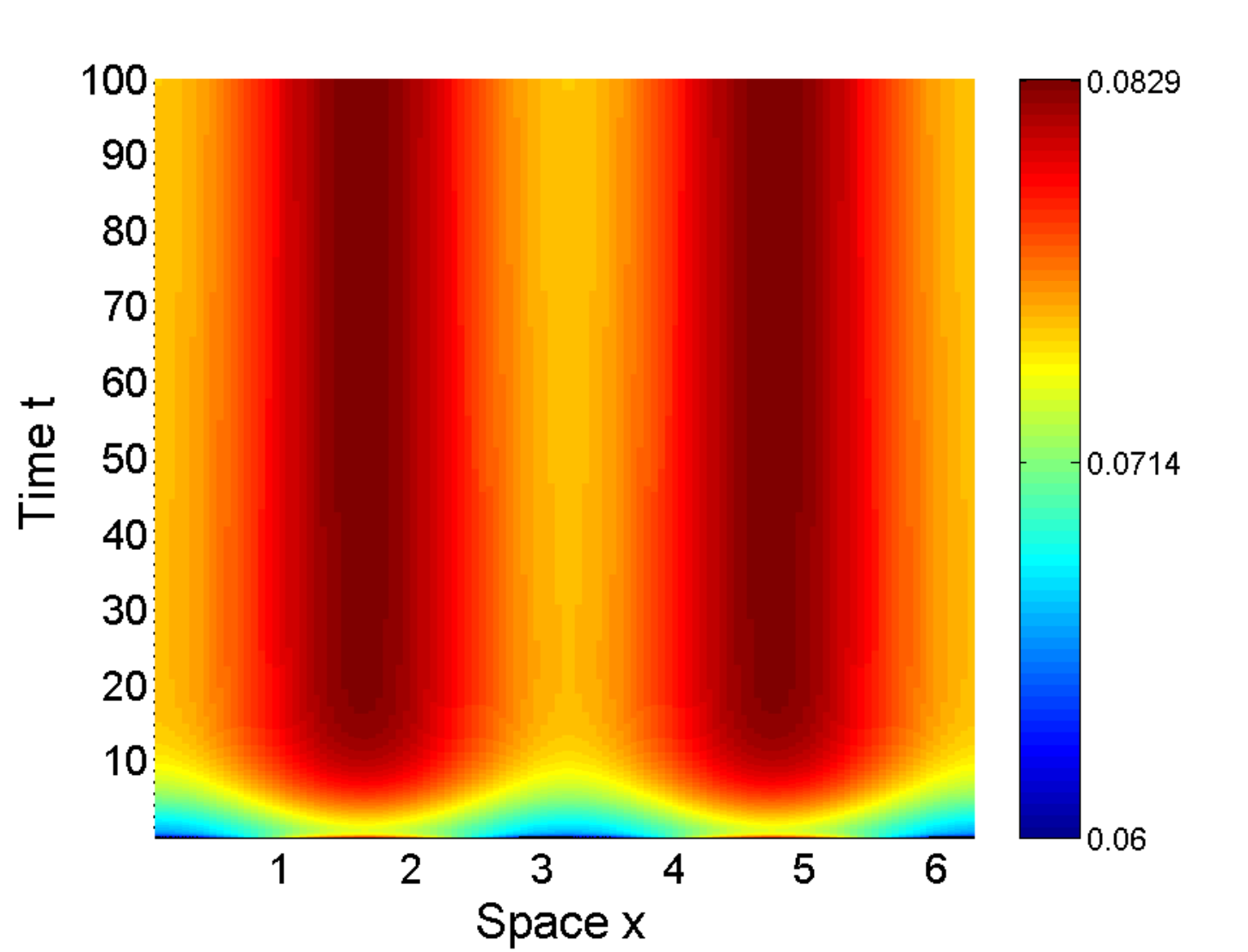}}\quad
\subfloat[Prey]{\includegraphics[scale=0.45]{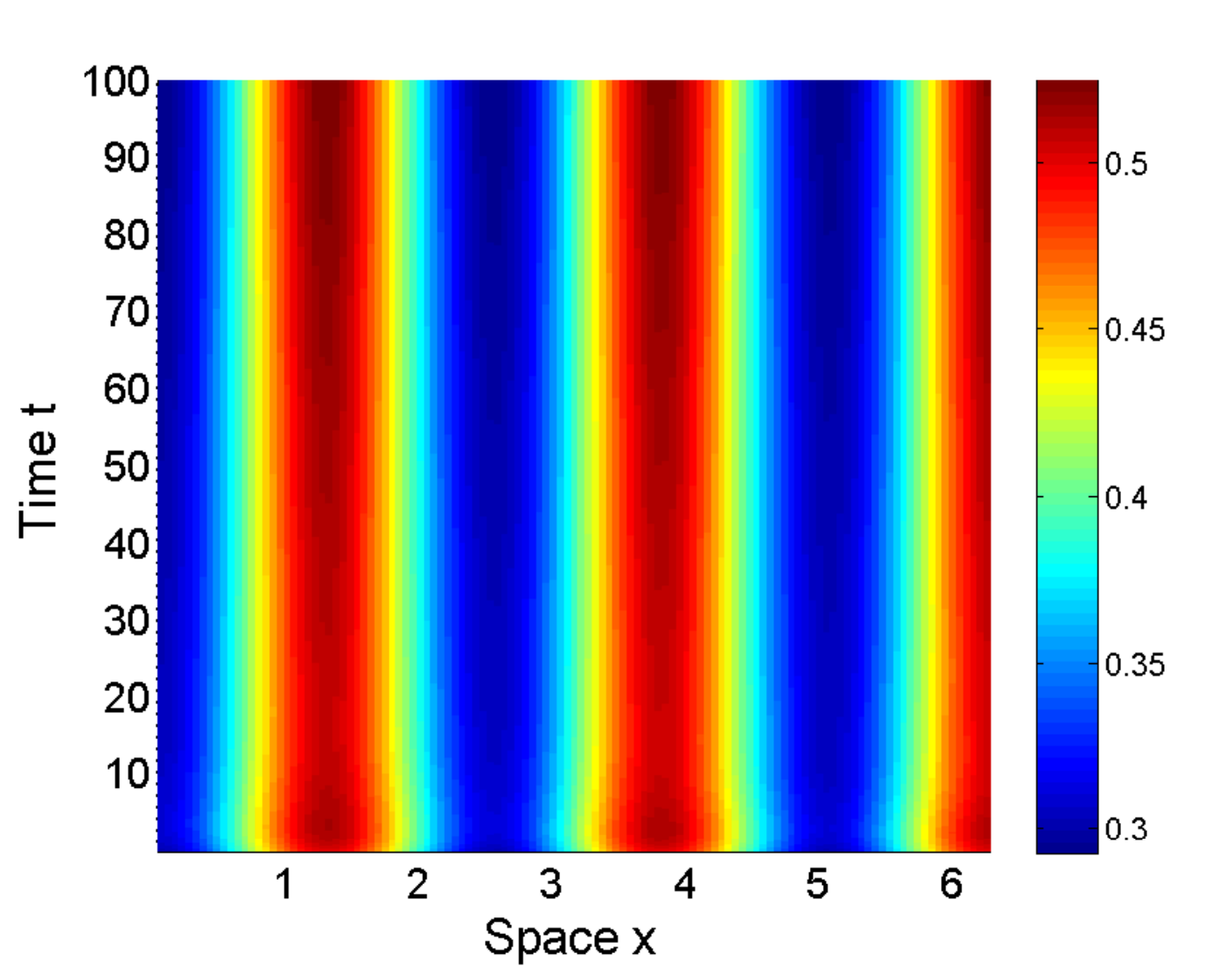}}
\subfloat[Predator]{\includegraphics[scale=0.45]{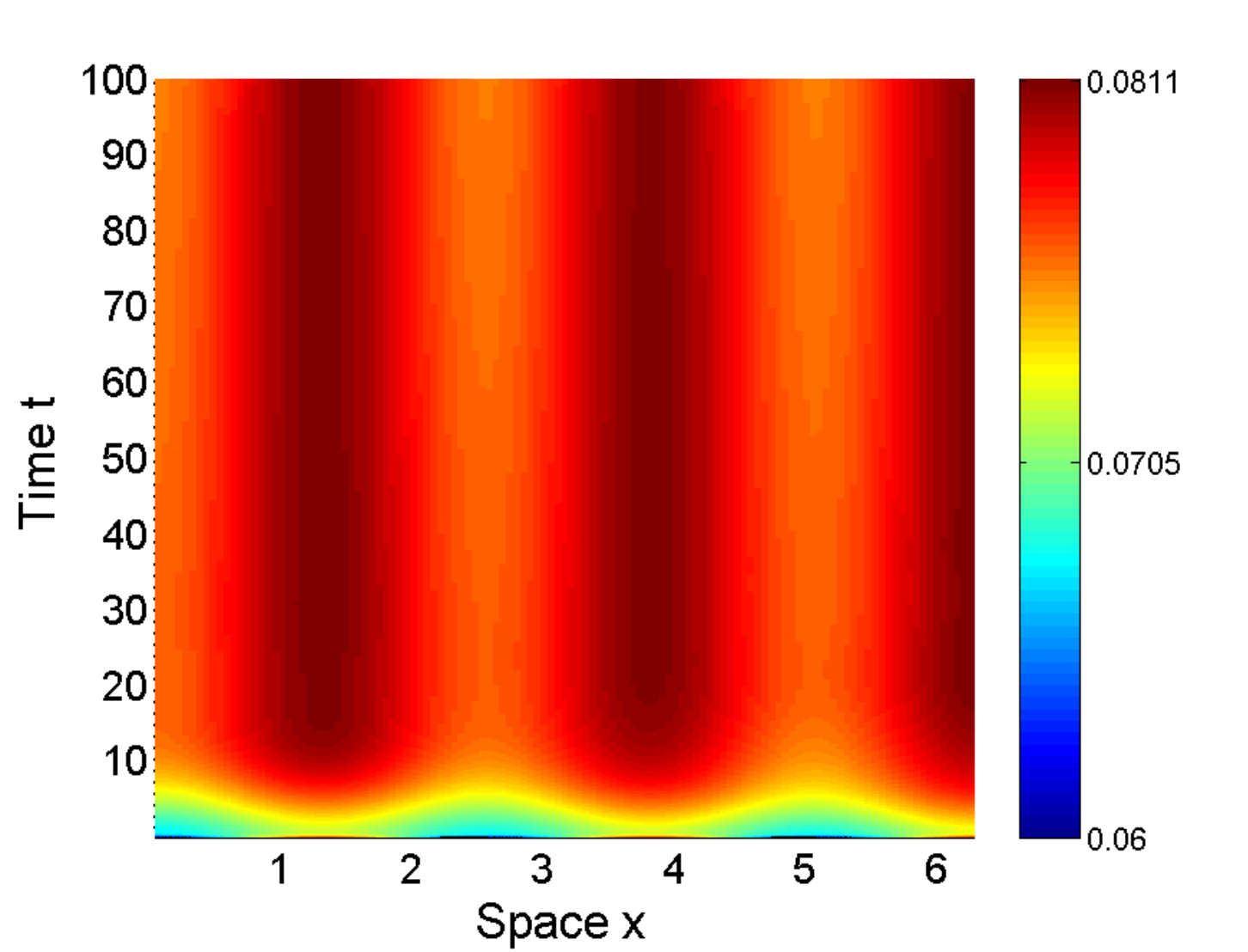}}
  \caption{Dynamics of Eq. \eqref{equ-prenonlocal} with the parameters in \eqref{equ-preparameter55} and $m=3.5$ ($\la=0.4$): (top row) mode-4 spatial patterns with initial values $u_0(x)=0.4-0.1\cos(2x),~v_0(x)=0.07-0.07\cos(2x)$; (bottom row) mode-5 spatial patterns with initial values $u_0(x)=0.4-0.1\cos(2.5x),~v_0(x)=0.07-0.07\cos(2.5x)$.\label{Fig-solupre3}}
\end{figure}

\begin{example}
Finally we take the parameters as
\begin{equation}\label{exm4}
d_1=0.006,~d_2=0.9,~\theta=1,~k=0.5,~l=4.
\end{equation}
The graphs of $D_i=0$ and $T_i=0$ are shown in $(\la,p)$ plane in Fig. \ref{Fig-prediagram} (iv). We have the spatially non-homogeneous Hopf bifurcation points:
\begin{equation*}
\la^{H}_{1,-}=0.0706,~\la^{H}_{1,+}=0.4011,
\end{equation*}
and the steady state bifurcation points are:
\begin{equation*}
\la^{S}_{10,-}=0.2988,~\la^{S}_{10,+}=0.3602,~\la^{S}_{11,-}=0.2765,~\la^{S}_{11,+}=0.3478,~\la^{S}_{12,-}=0.2837,~\la^{S}_{12,+}=0.3128.
\end{equation*}

\begin{figure}[htp]
\centering
\subfloat[Prey]{\includegraphics[scale=0.45]{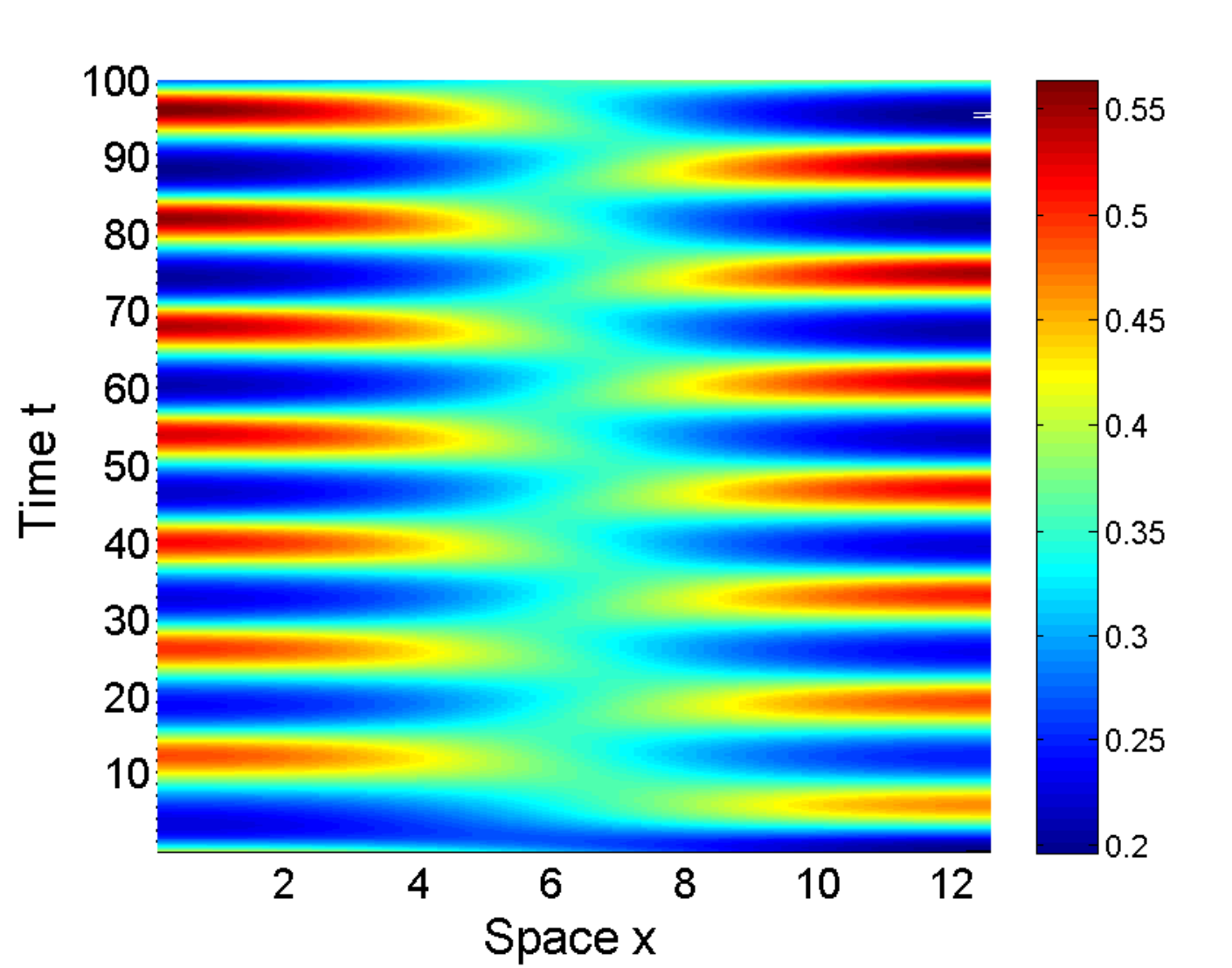}}
\subfloat[Predator]{\includegraphics[scale=0.45]{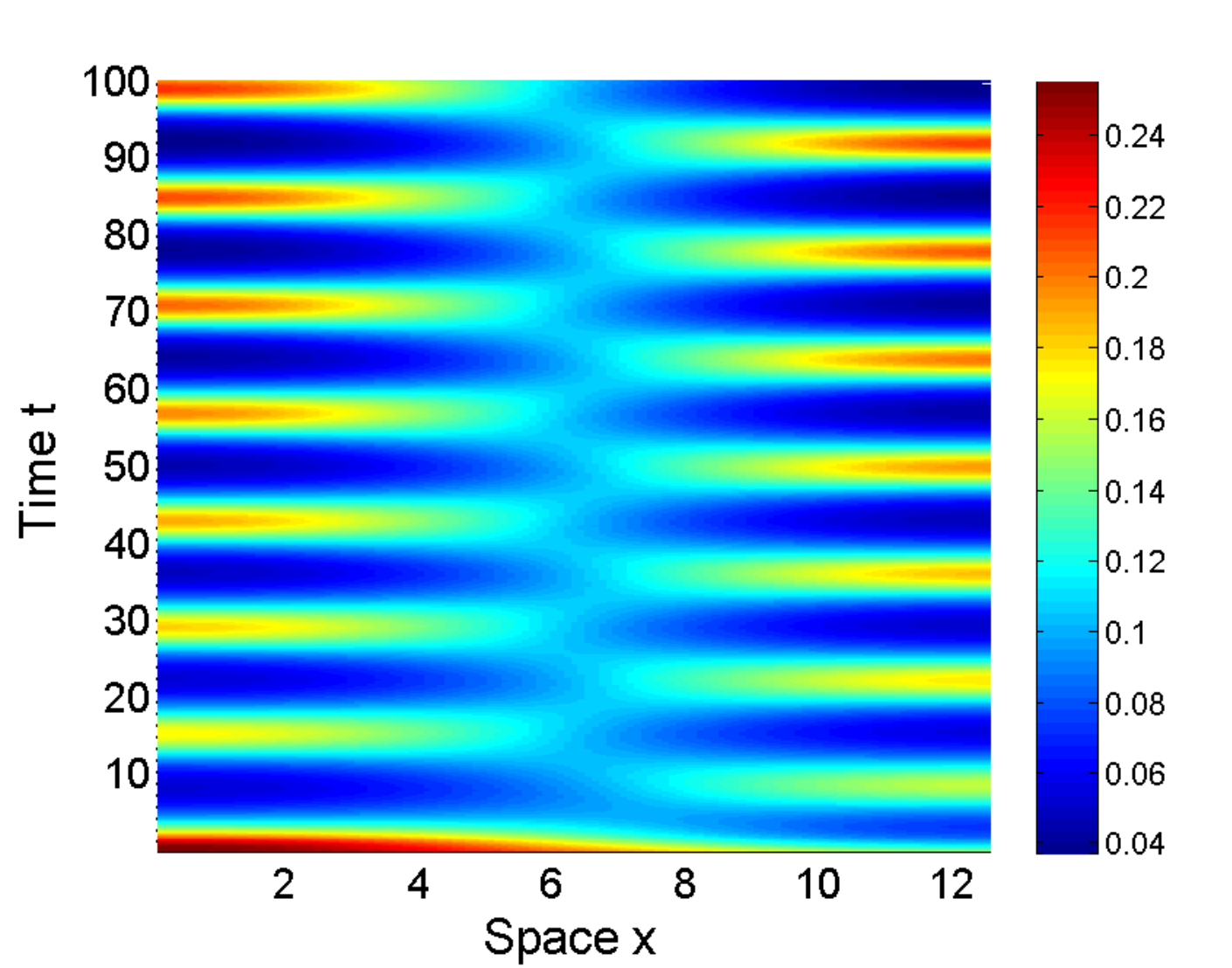}}\quad
\subfloat[Prey]{\includegraphics[scale=0.45]{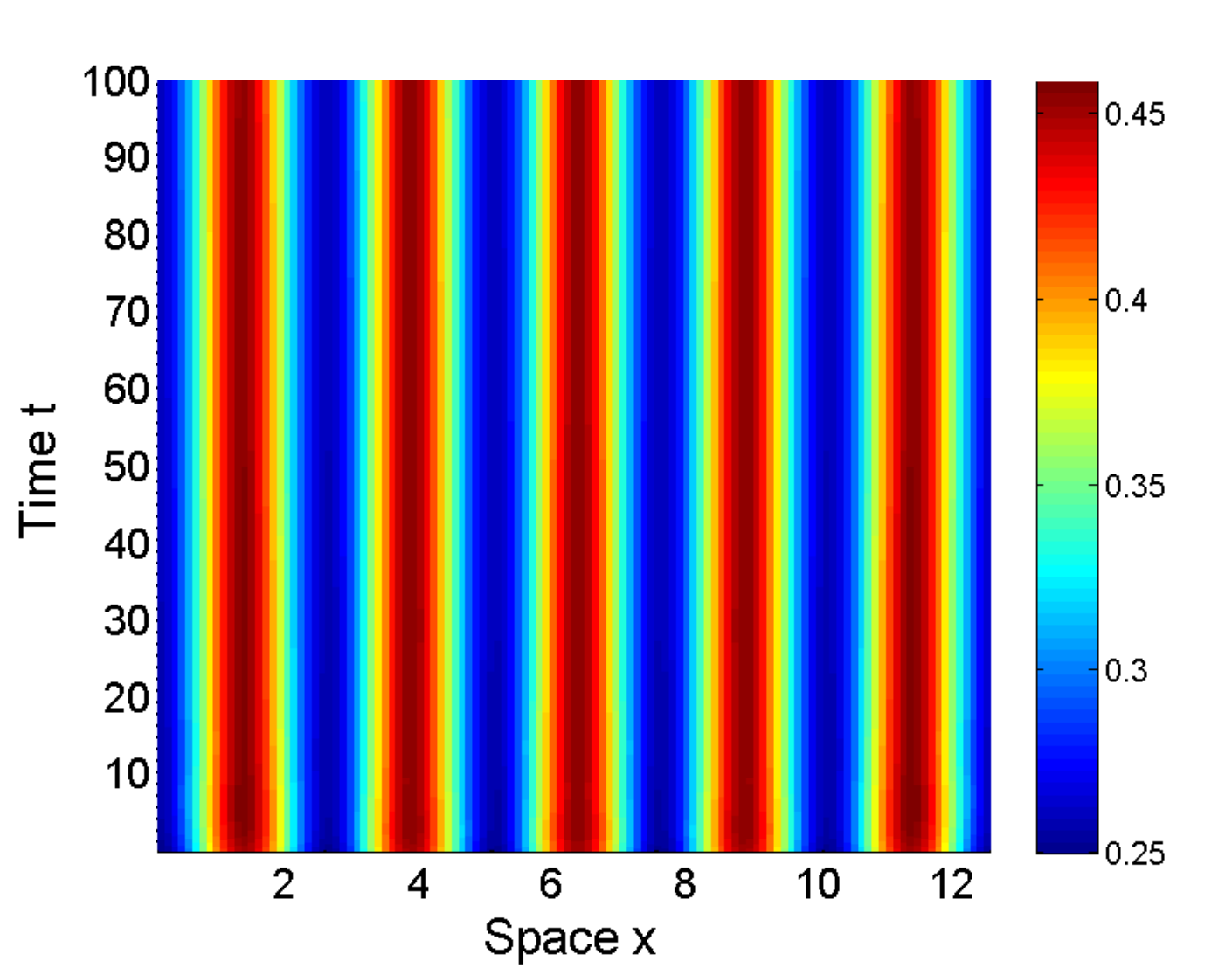}}
\subfloat[Predator]{\includegraphics[scale=0.45]{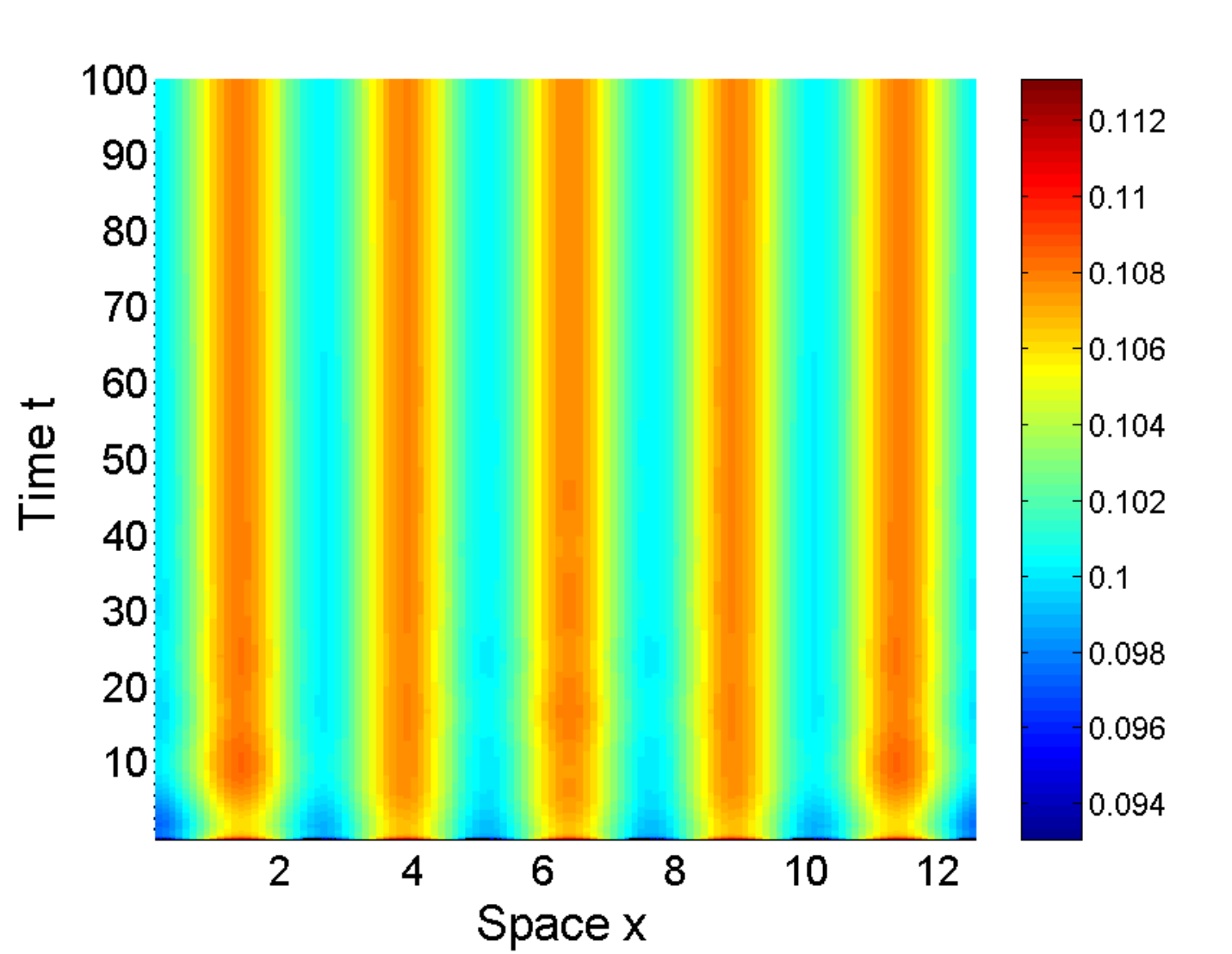}}
  \caption{The dynamics of Eq. \eqref{equ-prenonlocal} with the parameters being \eqref{exm4} and $m=3.857$ ($\la=0.35$): (top row) mode-1 spatiotemporal pattern with initial values $u_0(x)=0.3+0.1\cos(x/4),~v_0(x)=0.2+0.05\cos(x/4)$; (bottom row) mode-10 spatial patterns with initial values $u_0(x)=0.35-0.1\cos(10x/4),~v_0(x)=0.103-0.01\cos(10x/4)$. \label{Fig-solupre4}}
\end{figure}

By using the normal form calculations (see \cite{YiWeiShi2009} and Remark \ref{rem4}), we find that
\begin{equation}\label{equ-prenumericalc1}
\mathcal{R}e(c_1(\la^{H}_{1,-}))=54.6124>0,~\mathcal{R}e(c_1(\la^{H}_{1,+}))=0.0434<0.
\end{equation}
As a consequence of \eqref{equ-prenumericalc1} and the fact that $\la^{H}_{1,-}<\la_*$, $\la^{H}_{1,+}>\la_*$, we have
\begin{equation*}
\mu'(\la^{H}_{1,-})>0,~\mu'(\la^{H}_{1,+})<0.
\end{equation*}
According to \cite{YiWeiShi2009}, we know that the spatially non-homogeneous Hopf bifurcation at $\la=\la^{H}_{1,-}$ and $\la=\la^{H}_{1,+}$ are both supercritical, and the bifurcating periodic orbits near $\la=\la^{H}_{1,-}$ and  $\la=\la^{H}_{1,+}$ are both stable. In Fig.\ref{Fig-solupre4}, with $\la=0.35$,  the mode-1 spatially non-homogeneous time-periodic pattern and mode-10 spatially non-homogeneous steady state are observed with different initial conditions.
\end{example}

\section{Discussion}

In this work, we study the effect of spatial average on the pattern formation of reaction-diffusion systems. For a classical scalar reaction-diffusion equation subject to the homogeneous Neumann boundary condition, spatial pattern formation is impossible on a convex spatial domain. However, when the spatial average is incorporated into the model, stable spatially non-constant steady state can emerge from a symmetry-breaking bifurcation. For a classical two-species reaction-diffusion system with homogeneous Neumann boundary condition, Hopf bifurcation of the corresponding ODE system induces spatially homogeneous periodic orbits, and non-constant steady state can be generated through Turing instability, but stable spatially non-homogeneous time-periodic patterns can only be generated through secondary Turing-Hopf bifurcation which is co-dimension two \cite{Song2016,Song2018}. It is found here that in a two-species reaction-diffusion system with spatial average, spatially non-homogeneous periodic orbits can be generated from a primary (co-dimension one) spatially non-homogeneous Hopf bifurcation from the stable constant steady state.

Another point we want to address is that nonlocality induced instability allows more flexible conditions on the kinetics of the underlying system, and it does not require typical activator-inhibitor interaction between the two species. The diffusive Lotka-Volterra cooperative model with spatial average effect serves as an example to support this view, and the example of diffusive Rosenzweig-MacArthur predator-prey model with spatial average shows how the spatial average can help to generate spatiotemporal patterns in an otherwise stable system with a  unique homogeneous state. Our theory and these examples clearly show that the addition of effect of spatial average in reaction-diffusion systems broadens the range of reaction-diffusion models for spatiotemporal  pattern formation.

Usually spatial heterogeneity increases the complexity of spatial patterns. It is interesting to notice that the mechanism of pattern formation here is to add some partial spatial homogeneity. In some reaction-diffusion systems, stable spatial patterns are not able to be formed. But, when the spatial average is added into the system, spatial patterns can be observed. If we replace all the local terms with the corresponding spatial average terms, the system will be equivalent to an ODE system, spatial pattern formation is also impossible. Thus a combination of locality and nonlocality may be helpful for the formation of spatial patterns.

\section*{Acknowledgement}
This work was done when the first author visited  William \& Mary during the academic year 2016-2018, and she would like to thank Department of Mathematics at  William \& Mary for their support and kind hospitality.

\section*{Appendix}
\appendix
\renewcommand{\theequation}{\thesection A.\arabic{equation}}

\subsection*{The proof of Theorem \ref{theorem-global}}

\begin{proof}
Firstly, we integrate both sides of Eq. \eqref{equ-exampletwogeneral} on $\Om$ and divide by $|\Om|$ which is the spatial domain size, then we obtain a ODE system of $\bar{u}$:
\begin{equation}\label{equ-average}
\bar{u}_t=a-(b+c)\bar{u}-(d+e)\bar{u}^2.
\end{equation}
Then, the equilibrium of Eq. \eqref{equ-average} also satisfies \eqref{equ-exampletwogeneral} which admits a unique positive root $u=u_*$. In addition, for Eq. \eqref{equ-average}, the unique equilibrium $u=u_*$ is globally stable, and all the solutions of \eqref{equ-average} will converge to $u=u_*$ as $t\rightarrow+\infty$. Note that $\bar{u}$ is a function of $t$. Then, we rewrite Eq. \eqref{equ-exampletwogeneral} as:
\begin{equation}
u_t=d\Delta u+A(t)-B(t)u,
\end{equation}
where $A(t)=a-b\bar{u}-d\bar{u}^2$ and $B(t)=c+e\bar{u}$.
Denote $\tilde{A}=\lim\limits_{t\rightarrow+\infty}A(t)>0,~\tilde{B}=\lim\limits_{t\rightarrow+\infty}B(t)>0$, then for any $0<\epsilon\ll1$, there exists $T>0$ such that for arbitrary $t>T$, we have
\begin{equation*}
\begin{cases}
u_t\leq d\Delta u+(\tilde{A}+\epsilon)-(\tilde{B}-\epsilon)u,\\
u_t\geq d\Delta u+(\tilde{A}-\epsilon)-(\tilde{B}+\epsilon)u.
\end{cases}
\end{equation*}
Therefore, we can use the $u_1\geq u(x,t)$ as the upper solution with $u_1$ is the solution of the following equation:
\begin{equation}\label{equ-upper}
\begin{cases}
u_{1}^{\prime}=(\tilde{A}+\epsilon)-(\tilde{B}-\epsilon)u_1,~t>T,\\
u_{1}(0)=\max\limits_{x\in\Om} u(x,T),
\end{cases}
\end{equation}
and the lower solution $u_2\leq u(x,t)$ satisfying
\begin{equation}\label{equ-lower}
\begin{cases}
u_{2}^{\prime}=(\tilde{A}-\epsilon)-(\tilde{B}+\epsilon)u_2,~t>T,\\
u_{2}(0)=\max\limits_{x\in\Om} u(x,T).
\end{cases}
\end{equation}
Moreover, by the theory of ODE, we know the asymptotic behavior of Eqs. \eqref{equ-upper} and \eqref{equ-lower}:
\begin{equation*}
\lim_{t\rightarrow+\infty}u_1(t)=\frac{\tilde{A}+\epsilon}{\tilde{B}-\epsilon},~~\lim_{t\rightarrow+\infty}u_2(t)=\frac{\tilde{A}-\epsilon}{\tilde{B}+\epsilon}.
\end{equation*}
By the arbitrariness of $\epsilon$, we obtain that $\lim\limits_{t\rightarrow+\infty}u(x,t)=\tilde{A}/\tilde{B}=u_*$. We complete the proof.
\end{proof}

\bibliographystyle{plain}%{astron}%{unsrt}% {astron}% {amsplain}

\bibliography{nonlocal}
\end{document}